\newtheorem{prop}{Proposition}[section]
\newtheorem{cor}[prop]{Corollary}
\newtheorem{lem}[prop]{Lemma}
\newtheorem{thm}[prop]{Theorem}
\theoremstyle{remark}
\newtheorem{remark}[prop]{Remark}
\newcommand{\dif}{\mathrm{d}}
\newcommand{\Cov}{\operatorname{Cov}}
\newcommand{\vertiii}[1]{{\left\vert\kern-0.25ex\left\vert\kern-0.25ex\left\vert #1
    \right\vert\kern-0.25ex\right\vert\kern-0.25ex\right\vert}}
\newcommand{\E}{\EE}
\newcommand{\Var}{\operatorname{Var}}
\newcommand{\EE}{\mathbb{E}}
\newcommand{\NN}{\mathbb{N}}
\newcommand{\PP}{\mathbb{P}}
\newcommand{\RR}{\mathbb{R}}
\newcommand{\bouncy}{\mathrm{B}}
\newcommand{\zigzag}{\mathrm{Z}}
\def \PP{\mathbb{P}}
\def \RR{\mathbb{R}}
\def \EE{\mathbb{E}}
\numberwithin{equation}{section}
\begin{document}

\title{High-dimensional scaling limits of piecewise deterministic sampling algorithms}

\author[$1$]{Joris Bierkens}
\author[$2$]{Kengo Kamatani}
\author[$3$]{Gareth O. Roberts}
\affil[$1$]{{\small Delft University of Technology}}
\affil[$2$]{{\small Osaka University}}
\affil[$3$]{{\small University of Warwick}}
\date{}

\maketitle

\begin{abstract}
Piecewise deterministic Markov processes are an important new tool in the design of Markov Chain Monte Carlo algorithms. Two examples of fundamental importance are the Bouncy Particle Sampler (BPS) and the Zig-Zag process (ZZ). In this paper scaling limits for both algorithms are determined. Here the dimensionality of the space tends towards infinity and the target distribution is the multivariate standard normal distribution. For several quantities of interest (angular momentum, first coordinate, and negative log-density) the scaling limits show qualitatively very different and rich behaviour. Based on these scaling limits the performance of the two algorithms in high dimensions can be compared. Although for angular momentum both processes require only a computational effort of $O(d)$ to obtain approximately independent samples, the computational effort for negative log-density and first coordinate differ: for these BPS requires $O(d^2)$ computational effort whereas ZZ requires $O(d)$. Finally we provide a criterion for the choice of the refreshment rate of BPS. 
\end{abstract}

%\tableofcontents

\section{Introduction}

Piecewise deterministic Markov processes (PDMPs, \cite{Davis1984}
	) have turned out to be of substantial interest for Monte Carlo analysis, see, for example \cite{BouchardCote2017,pakman2016stochastic,BierkensFearnheadRoberts2016,vanetti2017piecewise}, which have particularly focused on potential for applications in Bayesian statistics, although their uses are far wider, see for example \cite{michel2014generalized,peters2012rejection} for applications in physics. 
However, there are still substantial gaps in our understanding of their theoretical properties.  Even results about the ergodicity of these methods (including irreducibility and exponential ergodicity problems) often involve intricate and complex problems \cite{MR2385873,Deligiannidis2017,BierkensZitt2017}.

The two main PDMP methodologies for Monte Carlo algorithms
%\joris{for Markov Chain Monte Carlo (MCMC)}\kengo{Should it be Monte Carlo? Since it is not Markov chains.} 
are the Zig-Zag (\cite{BierkensFearnheadRoberts2016}) and the Bouncy Particle Sampler (BPS) (\cite{BouchardCote2017}), and we refer to these papers for applications of these methods.  Interesting hybrid strategies are certainly possible but are currently under-explored. 
The important practical question for Monte Carlo practitioners concerns which methodology should be chosen, with currently available empirical comparisons giving mixed results.

The focus of the present paper is on shedding some light on these questions by providing a high-dimensional analysis of these two classes of PDMPs. Our approach will identify weak limits of PDMP chains (suitably speeded up) as dimension goes to infinity. Such analyses are of interest in connection with computational cost estimation of Monte Carlo methods (see for example \cite{RGG,roberts2014complexity}).

\subsection{Piecewise deterministic Markov processes}

We shall consider two particular classes of PDMPs (Zig-Zag and BPS)  which have proved to be valuable for Monte Carlo sampling. Their
 constructions begin in the same way. We are interested in sampling from
 a target distribution which has density  $Z_d^{-1} \exp(-\Psi^d(\xi))$
 with respect to $d$-dimensional Lebesgue measure with  normalising constant 
 \begin{equation}\label{eq:normalizing_constant}
	Z_d=\int_{\mathbb{R}^d} \exp(-\Psi^d(\xi))~\dif \xi<\infty \ . 
\end{equation}
 Zig-Zag and BPS proceed by augmenting this space to include an independent velocity variable taking values uniformly on a prescribed space $ \Theta \subset \mathbb{R}^d$.  Both algorithms define Piecewise deterministic Markov dynamics which preserve this extended target distribution on the augmented state space $E^d=\mathbb{R}^d\times \Theta $. The difference between Zig-Zag and BPS lies in the choice of $\Theta$ and the dynamics for moving between velocities.
 
 For both algorithms we shall make use of independent standardised homogeneous Poisson measures, $N$ say, on $\mathbb{R}_+\times\mathbb{R}_+$, so that $\mathbb{E}[N(\dif t,\dif x)]=\dif t\dif x$. In our notations we will use a superscript $\zigzag$ to indicate the Zig-Zag process, and a superscript $\bouncy$ to refer to the Bouncy Particle Sampler.

 \subsubsection{Zig-Zag sampler} 
 \label{sec:ZZ}
 For the Zig-Zag sampler  the set of possible directions is given by 
$$
\Theta = \mathfrak{C}^{d-1} := \{-1,+1\}^d\ ,
$$
with $\chi_d$ denoting the uniform distribution on $\mathfrak{C}^{d-1}$, and constructs a Markov chain on  the state space
$E^{\zigzag,d}= \mathbb{R}^d\times  \mathfrak{C}^{d-1}$. Here the scaling of the velocities is chosen such that $\Theta$ is a subset of the unit sphere, to enable a more direct comparison with the Bouncy Particle Sampler later on.
%
%Zig-Zag will make use of independent homogeneous Poisson measures
%$N,N^1,N^2,\ldots$  on $\mathbb{R}_+\times\mathbb{R}_+$, that is, 
%$\mathbb{E}[N(\dif t,\dif x)]=\dif t\dif x$. 
%
Let $\lambda^{\zigzag,d} = (\lambda^{\zigzag,d}_1,\ldots,\lambda^{\zigzag,d}_d):E^{\zigzag,d}\rightarrow \mathbb{R}_+^d$. The Zig-Zag sampler 
with the jump rate $\lambda^{\zigzag,d}$
generates a Markov process $\{x_t^{\zigzag,d}=(\xi_t^{\zigzag,d},v_t^{\zigzag,d})\}_{t\ge 0}$ 
on $E^{\zigzag,d}$ such that
$$
\xi_t^{\zigzag,d}~=~\xi_0^{\zigzag,d}~+~\int_0^t v_s^{\zigzag,d}~\dif s, \quad  (t\ge 0), 
$$
and $v_t^{\zigzag,d}=(v_{1,t}^{\zigzag,d},\ldots,v_{d,t}^{\zigzag,d})$ is defined by 
$$
v_{i,t}^{\zigzag,d}~=~v_{i,0}^{\zigzag,d}-2\int_{(0,t]\times\mathbb{R}_+} v_{i,s-}^{\zigzag,d}~1_{\{z\le \lambda^{\zigzag,d}_i(x_{s-}^{\zigzag,d})\}}~N^i(\dif s,\dif z)\ (t\ge 0,~i~=~1,\ldots, d) 
$$
for independent Poisson processes $N^1, \ldots N^d$,
where $x_0^{\zigzag,d}=(\xi_0^{\zigzag,d},v_0^{\zigzag,d})$ is an $E^{\zigzag,d}$-valued random variable. 

\subsubsection{Bouncy Particle Sampler}\label{subsubsec:BPS}
 For the Bouncy Particle Sampler the set of possible directions is given by 
$$\Theta := \mathfrak{S}^{d-1}=\{x\in\mathbb{R}^d:\|x\|^2=1\}$$
with $\psi_d$ denoting the uniform distribution on $\mathfrak{S}^{d-1}$,
and constructs a Markov chain on  the state space
$E^{\bouncy,d}=  \mathbb{R}^d\times \mathfrak{S}^{d-1}$.
%On the Euclidean space, we set $\Psi^d:\mathbb{R}^d\rightarrow\mathbb{R}_+$ as a smooth function such that 
%\begin{equation}\label{eq:normalizing_constant}
%	Z_d=\int_{\mathbb{R}^d} \exp(-\Psi^d(\xi))~\dif \xi<\infty. 
%\end{equation}
%We write $\mathfrak{S}^{d-1}=\{x\in\mathbb{R}^d:\|x\|^2=1\}$ for the set of unit vectors, and 
%$\mathfrak{C}^{d-1}=\{-d^{-1/2},+d^{-1/2}\}^d$ for its subset which will be the spaces of directions of the processes. 
%Let  $\chi_d$ and $\psi_d$ be the uniform distributions on $\mathfrak{C}^{d-1}$ and $\mathfrak{S}^{d-1}$ respectively. 
Let $\kappa^d:E^{\bouncy,d}\rightarrow \mathfrak{S}^{d-1}$ be a function and let $\lambda^{\bouncy,d}%, \rho^{d}
: E^{\bouncy,d}\rightarrow \mathbb{R}_+$. 
Then BPS with the jump rate $\lambda^{\bouncy,d}$ and the refreshment rate $\rho^d>0$ generates a Markov process $\{x_t^{\bouncy,d}=(\xi_t^{\bouncy,d},v_t^{\bouncy,d})\}_{t\ge 0}$  defined by 
$$
\xi_t^{\bouncy,d}~=~\xi_0^{\bouncy,d}~+~\int_0^t v_s^{\bouncy,d}~\dif s, \quad  (t\ge 0), 
$$
and $v_t^{\bouncy,d}$ is defined by 
\begin{align*}
v_t^{\bouncy,d}~&=~v_0^{\bouncy,d}~+~\int_{(0,t]\times\mathbb{R}_+} (\kappa^d(x_{s-}^{\bouncy,d})-v^{\bouncy,d}_{s-})~1_{\{z\le \lambda^{\bouncy,d}(x_{s-}^{\bouncy,d})\}}~N(\dif s,\dif z)\\
&\quad~+~\int_{(0,t]\times\mathfrak{S}^{d-1}}(u-v_{s-}^{\bouncy,d})~R_d(\dif s,\dif u)
%1_{\{z\le \rho^d(x_{s-}^{\bouncy,d})\}}~R_d(\dif s,\dif z,\dif u)
\end{align*}
where $R_d$ is a homogeneous random measure which is independent from $N$ with intensity measure 
$$
\mathbb{E}[R_d(\dif s,\dif u)]=\rho^d~\dif s~\psi_d(\dif u). 
%\dif s~\dif z~\psi_d(\dif u). 
$$
% \kengo{For notational simplicity, I changed that the refreshment rate is constant from the beginning. I hope that this is not a big change in practice. }
Without refreshment the Bouncy Particle Sampler may not be ergodic in general \cite{BouchardCote2017}. 
The refreshment rate using the random measure $R_d$ was referred to as restricted refreshment in \cite{BouchardCote2017}, and 
other choices were also considered in that paper.

\subsection{Finite dimensional properties}

In this section we briefly review finite dimensional properties of the piecewise deterministic processes. 
Here and elsewhere, we denote the $d$-dimensional Euclidean inner product by $\langle x,y\rangle =\sum_{i=1}^dx_iy_i$  and the norm by $\|x\|=(\langle x,x\rangle)^{1/2}$.

Let $F_i(v)$ be the function that switches the sign of the $i$-th element of $v\in\mathfrak{C}^{d-1}$.  
By Theorem II.2.42 of \cite{JS} and Proposition VII.1.7 of \cite{MR1725357}, 
the infinitesimal generator $L^{\zigzag,d}$ of the Markov process corresponding to the Zig-Zag sampler is 
defined by 
$$
(L^{\zigzag,d}\varphi)(\xi,v)=\sum_{i=1}^d~\frac{\partial \varphi}{\partial \xi_i}(\xi,v)~v_i~+~\sum_{i=1}^d~\lambda_i^{\zigzag,d}(\xi,v)~(\varphi(\xi,F_i(v))-\varphi(\xi,v))
$$
for $\varphi:E^{\zigzag,d}\rightarrow \mathbb{R}$ such that $\varphi(\cdot,v)\in C^1_0(\mathbb{R}^d)\ (v\in\mathfrak{C}^{d-1})$ where $C^1_0(\mathbb{R}^d)$ is the set of differentiable functions with compact support. 
Let $\Psi^d:\mathbb{R}^d\rightarrow\mathbb{R}_+$ be a smooth function with (\ref{eq:normalizing_constant}). 
Set $\lambda^{\zigzag,d}(x)$ so that $\lambda^{\zigzag,d}_i(\xi, v)-\lambda^{\zigzag,d}_i(\xi,F_i(v))=\partial_i\Psi^d(\xi)v_i$. 
As discussed in, for example, \cite{BierkensFearnheadRoberts2016,BierkensZitt2017}, the Markov process corresponding to the Zig-Zag sampler is $\Pi^{\zigzag,d}$-invariant.

% 
% Observe that 
% \begin{align*}
% \int_{E^{\zigzag,d}} (L^{\zigzag,d}\varphi)(x)~\Pi^{\zigzag,d}(\dif x)
% & =
% \sum_{i=1}^d\int \left\{~\frac{\partial \varphi}{\partial \xi_i}(\xi,v)~v_i~+~~f_{d,i}(\xi,v)~(\varphi(\xi,F_i(v))-\varphi(\xi,v))\right\}~\Pi^{\zigzag,d}(\dif x)\\
% & =
% \sum_{i=1}^d\int \left\{~\frac{\partial \varphi}{\partial \xi_i}(\xi,v)~v_i~+~(f_{d,i}(\xi,F_i(v))-f_{d,i}(\xi,v))\varphi(\xi,v)\right\}~~\Pi^{\zigzag,d}(\dif x)\\
% & =
% \sum_{i=1}^d\int \left\{~\frac{\partial \varphi}{\partial \xi_i}(\xi,v)~-~\partial_i\Psi^d(\xi)\varphi(\xi,v)\right\}v_i~\Pi^{\zigzag,d}(\dif x)=0
% \end{align*}
% by the integration-by-parts formula. Thus Markov process generated by the Zig-Zag sampler is $\Pi^{\zigzag,d}$-invariant (see Proposition 4.9.2 of \cite{MR838085}). 

The infinitesimal generator $L^{\bouncy,d}$ of the Markov process corresponding to the Bouncy Particle Sampler is 
defined by 
\begin{align*}
(L^{\bouncy,d}\varphi)(\xi,v)&=\left\langle\nabla_\xi \varphi(\xi,v),v\right\rangle~+~~\lambda^{\bouncy,d}(\xi,v)~(\varphi(\xi,\kappa^d(\xi,v))-\varphi(\xi,v))\\
&\quad+~\rho^d~\left(\int \varphi(\xi,u)\psi_d(\dif u)-\varphi(\xi,v)\right)	
\end{align*}
for continuous functions $\varphi:E^{\bouncy,d}\rightarrow\mathbb{R}$ satisfying
$\varphi(\cdot,v)\in C_0^1(\mathbb{R}^d)\ (v\in\mathfrak{S}^{d-1})$.  Here, 
$\nabla_\xi=(\partial/\partial \xi_i)_{i=1,\ldots, d}$ is the derivative operator and we will denote it by $\nabla$ when there is no ambiguity. 
We assume a constant refreshment rate, that is $\rho^d%(\xi,v)
\equiv \rho>0$, and $\kappa^d$ is a reflection function defined by 
\begin{equation}\label{eq:reflection_function}
	\kappa^d(\xi,v)=v-2\frac{\langle \nabla \Psi^d(\xi), v\rangle}{\|\nabla \Psi^d(\xi)\|^2}\nabla\Psi^d(\xi)
\end{equation}
and finally $\lambda^{\bouncy,d}(\xi, v)=\max\{\langle \nabla\Psi^d(\xi),v\rangle,0\}$. 
As discussed in, for example, \cite{BouchardCote2017,Deligiannidis2017} the Markov process corresponding to the Bouncy Particle Sampler is $\Pi^{\bouncy,d}$ invariant.

% 
% Observe that the reflection function 
% $\kappa^d(\xi,\cdot):\mathbb{R}^d\rightarrow\mathbb{R}^d$ is a one-to-one mapping and its Jacobian determinant is $1$. Moreover, 
% by the reflection property 
% \begin{equation}\label{eq:reflection_property}
% 	\langle\nabla\Psi^d(\xi),\kappa^d(\xi,v)\rangle =-\langle\nabla\Psi^d(\xi),v\rangle, 
% \end{equation}
% we have
% $$
% \kappa^d(\xi,\kappa^d(\xi,v))=v\ \mathrm{and}\ g_d(\xi, \kappa^d(\xi,v))=g_d(\xi,-v).  
% $$
% Then 
% \begin{align*}
% \int_{E^{\bouncy,d}} (L^{\bouncy,d}\varphi)(y)~\Pi^{\bouncy,d}(\dif y)
% & =
% \int \left\{~\left\langle \nabla \varphi(\xi,v),v\right\rangle~+~g_d(\xi,v)~(\varphi(\xi,\kappa^d(\xi,v))-\varphi(\xi,v))\right.\\
% &\quad +\left.\rho\left(\int \varphi(\xi,u)\psi_d(\dif u)-\varphi(\xi,v)\right)\right\}~\Pi^{\bouncy,d}(\dif y)\\
% & =
% \int \left\{~\left\langle \frac{\nabla \varphi(\xi,v),v\right\rangle~+~(g_d(\xi,\kappa^d(\xi,v))-g_d(\xi,v))\varphi(\xi,v)\right\}~\Pi^{\bouncy,d}(\dif y)\\
% & =
% \int \left\{~\left\langle \nabla \varphi(\xi,v),v\right\rangle~-~\langle\partial\Psi^d(\xi),v\rangle~\varphi(\xi,v)\right\}~\Pi^{\bouncy,d}(\dif y)=0
% \end{align*}
% by integration-by-parts formula. 
% Hence, this process is $\Pi^{\bouncy,d}$-invariant.  \joris{can we refer to another reference instead of re-proving this?}

\subsection{Summary of the main results}

In Section \ref{sec:high-dim}, we study the asymptotic properties of piecewise deterministic processes.  This section summarises the main results in that section. 
For simplicity, all results in Section \ref{sec:high-dim} assume that the initial value of $\xi$ is generated from the target distribution, and the initial value of $v$ is generated from the uniform distribution on the direction space. %\joris{What does the last phrase add (concerning the refreshment jump)? Essentially it only says something about the law of the velocity just before time $t = 0$. According to the law of the process, a refreshment at time 0 is an event of zero probability. I would remove this phrase since it causes some confusion, unless there is an important reason to keep it.}\kengo{I removed this par. There is a technical reason for refreshment jump at $t=0$. But it seems better writing this in the beginning of Section \ref{sec:bps_diffusion}. }
To avoid technical difficulties, we only consider the standard normal case, that is, 
$$
\Psi^d(\xi)=\frac{\|\xi\|^2}{2}.
% ,\ 
% \Psi^d(\eta)=\frac{\|\eta\|^2}{2}. 
$$
In agreement with this assumption, the jump rate of the Zig-Zag sampler is 
$$
\lambda^{\zigzag,d}_i(\xi,v)=\max\{\xi_i v_i,0\}=(\xi_iv_i)^+, \quad i=1,\ldots, d,\ (\xi, v) \in E^{\zigzag,d},
$$
and the jump rate and the refreshment rate of the Bouncy Particle Sampler are
$$
\lambda^{\bouncy,d}(\xi,v)=\max\{\langle \xi ,v\rangle,0\} = \langle \xi, v \rangle^+,~\rho^d(\xi,v)=\rho>0,\quad (\xi, v) \in E^{\bouncy,d},
$$
and the reflection function satisfies (\ref{eq:reflection_function}). 
Analogous to \cite{RGG}, we focus on relevant finite dimensional summary statistics. 
% \joris{What is the right place to introduce these?}\kengo{It may not be the best solution, but I stated as follows. }
The \emph{angular momentum} process, the \emph{negative log-target density process}  and the \emph{first coordinate process} are defined by 
$$
t\mapsto \left\langle \xi_t, \frac{v_t}{\|v_t\|}\right\rangle,\quad
t\mapsto d^{1/2}(d^{-1}\|\xi_t\|^2-1),\quad 
t\mapsto \xi_{1,t},
$$
respectively, for both the Zig-Zag sampler and the Bouncy particle sampler. As $d \rightarrow \infty$, the stationary distributions of these statistics converge to centered normal distributions (with variances 1, 2 and 1, respectively).
We compare the convergence rates of the Zig-Zag sampler (ZZ) and the Bouncy Particle Sampler (BPS) for these summary statistics. 
Table~\ref{tab:convergence-rates} summarises the results.   
 
\begin{table}[ht!]
\begin{center}
\begin{tabular}{ l  c  c  c}
  Method & Angular momentum & Negative log-density & $1$st Coordinate \\
  \hline			
  ZZ & $O(1)$ (Thm.~\ref{theo:zigzag_limit}) & $O(1)$ (Thm.~\ref{theo:zigzag_integral_limit}) & $O(1)$ (Thm.~\ref{theo:zigzag_first_component})\\
  BPS & $O(1)$ (Thm.~\ref{theo:bps_limit}) & $O(d)$ (Thm.~\ref{theo:bps_ll}) & $O(d)$ (Thm.~\ref{theo:bps_coordinate})\\
\end{tabular}
\caption{Size of continuous time intervals required to obtain approximately independent samples for the piecewise deterministic processes.}
\label{tab:convergence-rates}
\end{center}
\end{table}

The computational effort per unit time of the processes is proportional to the number of switches per unit time interval, multiplied by the computational effort per switch. For Zig-Zag and BPS, these are as given in Table~\ref{tab:computational-effort}.  
For the Zig-Zag sampler it is in principle possible to obtain higher efficiency ($O(1)$ computational effort per event instead of $O(d)$) but this depends on specialised  independence structure in the target distribution. In particular for the case of statistically independent targets as studied theoretically in most of this paper, the Zig-Zag can be implemented with the higher efficiency described in the top row of Table~\ref{tab:computational-effort}.
Moreover in situations in which there is a specific (sparse) conditional independence structure,  the Zig-Zag sampler can again be designed to benefit from this structure to give complexity according to
the top row of Table~\ref{tab:computational-effort}. On the other hand we do not see a way in which the generic BPS as described in this paper can utilise conditional independence. However it is worth noting that generalisations of Zig-Zag termed {\em Local BPS} by  \cite{BouchardCote2017}  and other variants as discussed in  \cite{peters2012rejection}
also share computational advantages from sparse conditional independence.
%and in fact some local versions of BPS can also utilise this structure e.g. \cite{BouchardCote2017} though not the generic BPS described here.
%as discussed in e.g. \cite{BouchardCote2017}. 
However in the context of a general partial correlation structure, implementation costs are an order of magnitude greater for the Zig-Zag (as is the case for relevant competitor algorithms such as MALA and HMC). Thus we give two complexities for Zig-Zag in Table~\ref{tab:computational-effort} which can be thought of as best and worst cases according to the above discussion.
% and therefore does not generalise well to correlated distributions.
%These rely on efficient simulation for a Gaussian stationary distribution (i.e. no rejected switches), as implemented in e.g. the R package \verb'RZigZag' (for Zig-Zag and BPS). 
\begin{table}[ht!]
\begin{center}
\begin{tabular}{ l  c  c  c}
  Method & $\sharp$ events/unit time & Comp. effort/event & Combined effort/unit time \\
  \hline			
  ZZ (with independence) & $O(d)$ (Cor.~\ref{cor:zigzag_switches}) & $O(1)$ & $O(d)$ \\
  ZZ (general case) & $O(d)$ (Cor.~\ref{cor:zigzag_switches}) & $O(d)$ & $O(d^2)$ \\
  BPS & $O(1)$ (Cor.~\ref{cor:bps_switches}) & $O(d)$ & $O(d)$\\
\end{tabular}
\caption{Computational effort of the piecewise deterministic processes.}
\label{tab:computational-effort}
\end{center}
\end{table}

In order to obtain the algorithmic complexity required to draw approximately independent samples, we should multiply the required continuous time scaling with the computational complexity per continuous time unit. By doing so, we obtain the algorithmic complexities of the ZZ and BPS as listed in Table~\ref{tab:combined}. 
%{\color{red}Note that the table is based on the product form of the target probability distribution. In general, the computational complexity of ZZ sampler increases while that of BPS sampler is unchanged. See Section \ref{sec:disucssion} for the detail. }

\begin{table}[ht!]
\begin{center}
\begin{tabular}{ l  c  c  c}
  Method & Angular momentum & Negative log-density & $1$st Coordinate \\
  \hline			
  ZZ (with independence) & $O(d)$  &  $O(d)$  & $O(d)$ \\
    ZZ (general case) & $O(d^2)$  &  $O(d^2)$  & $O(d^2)$ \\
  BPS & $O(d)$  & $O(d^2)$ & $O(d^2)$ \\
\end{tabular}
\caption{Algorithmic complexity to obtain approximately independent samples. }
\label{tab:combined}
\end{center}
\end{table}

% 
% \begin{remark}
% For the BPS, the computational effort of $O(d)$ per event depends crucially on sparsity of the inverse covariance matrix. For a target distribution with non-sparse inverse covariance, bounce events as well as refreshments require a full matrix-vector multiplication which is thus of $O(d^2)$. For Zig-Zag, a computational effort of $O(1)$ can be maintained in non-sparse settings through the use of suitable invariant quantities.
% \end{remark}
In terms of which algorithm, BPS or Zig-Zag should be implemented in any specific situation, the conclusions to the findings of Table~\ref{tab:combined} tentatively suggest that in the context of sparse conditional independence structure the Zig-Zag seems to have better complexity properties, but that for general target densities the methods have the same complexity. Of course these conclusions need to be treated with caution given the relatively specialised nature of the theory which underpins 
Table~\ref{tab:convergence-rates}.

Analogous to \cite{RGG}, we also obtain the optimal choice of the refreshment jump rate $\rho$. The limiting process of the negative log-target density of the BPS sampler is the Ornstein-Uhlenbeck process. The process attains the optimal convergence rate 
when the ratio of the expected number of refreshment jumps to that of all jumps is 
approximately $0.7812$ (see Figure \ref{fig:functions}). This result provides a practical criterion for selecting the refreshment rate. In Section \ref{sec:optimal}, we analyse this criterion for more general target probability distributions.

Asymptotic limit results illustrate some similarities and differences with the Metropolis-Hastings (MH) algorithm. Typically, high-dimensional limiting processes of MH algorithms are diffusions \cite{RGG,MR1888450}. In contrast,
the first two summary statistics processes of ZZ converge to non-Markovian Gaussian processes and the $1$st coordinate process of ZZ and the angular momentum process of BPS have pure jump process limits. 
At the same time, like MH algorithms, our results show that 
the piecewise deterministic processes can exhibit diffusive behaviour. In particular the latter two summary statistics processes for BPS have diffusion limits. 
Diffusion limits are known for PDMPs \cite{BouguetCloez2018,Fontbona2015}, but have to our knowledge not been established for dimension tending to infinity.

In this paper, we only consider the standard normal distribution except Section \ref{sec:optimal}. Experimental results of Section~\ref{sec:experiments} suggest that the obtained results remain valid for general distributions of product form.  For non-product strongly correlated distributions such as in \cite{arXiv:1412.6231} the convergence rates could be different. This remains a topic of active research. See also Section \ref{sec:discussion}. Also, throughout in this paper, we assume stationarity of the process. This assumption can be weakened with some extra work for moment calculus. However, their behaviours will be different from the current study if the initial distribution is far from the centre region of the target distribution. See \cite{MR2137324, MR3349007} for the case of Markov chain Monte Carlo methods. This also remains a topic of active research.
%\joris{Can some of these results be generalized? (spherically symmetric, factorized distributions, ...)}
%\kengo{It is not hard to show the above for the iid target distribution $\prod_{i=1}^d F(\dif \xi_i)$. However, in many cases, spherically distribution has different scenarios. }
%\kengo{Gaussian perturbation such as in \cite{MR2977981} may be ok...}

% \joris{Can we look at the angle proces, $\xi/\|\xi\|$?}
% \kengo{We can check the behavior of the finite projection of  $\xi/\|\xi\|$. 
% For example, we can check $d^{1/2}\pi_k(\xi_t^d)/\|\xi_t^d\|$, and I think the limiting process will be the same as that of $\pi_k(\xi_t^d)$ since the squared norm $\|\xi\|^2$ converges to $d$.}
% 

\section{High-dimensional properties}\label{sec:high-dim}

We analyse high-dimension properties of the Zig-Zag and  BPS samplers. 
Throughout in this paper, we assume strong stationarity of the Markov processes.  
Our first main objective is the analysis of 
the angular momentum processes
\begin{align*}
S_t^{\zigzag,d}&=\left\langle \xi_t^{\zigzag,d},\frac{v_t^{\zigzag,d}}{\|v_t^{\zigzag,d}\|}\right\rangle=
d^{-1/2}\left\langle\xi_t^{\zigzag,d},v_t^{\zigzag,d}\right\rangle\\
S_t^{\bouncy,d}&=\left\langle \xi_{t}^{\bouncy,d},\frac{v_{t}^{\bouncy,d}}{\|v_{t}^{\bouncy,d}\|}\right\rangle=\left\langle \xi_{t}^{\bouncy,d},v_{t}^{\bouncy,d}\right\rangle. 
\end{align*}
The behaviour of the 
angular momentum processes 
illustrates the dissimilarity of the Zig-Zag and BPS samplers. 
%In particular, we introduced a time scaling $t\mapsto d^{1/2}t$ only for the Zig-Zag sampler. 
% \kengo{terminology issue} \joris{didn't you propose ``angular momentum''? In any case that sounds good to me}.

The angular momentum processes do not completely capture the asymptotic properties of the Markov processes. For the understanding of long-time properties, it is more natural to consider the behavior of the negative log-target density. Observe that there is an interesting connection between the angular momentum process and the negative log-target density processes: 
$$
\dif \|\xi_t^{\zigzag,d}\|^2 =2 d^{1/2}S_{t}^{\zigzag,d}~\dif t,\ 
\dif \| \xi_t^{\bouncy,d} \|^{2}= 2S_t^{\bouncy,d}~\dif t. 
$$
%and 
%$$
%T_t^d=2d^{1/2}\left(\frac{\sum_{i=1}^d((\xi_{i,t}^dv_{i,t}^d)^+)^2}{d}-\frac{1}{2}\right). 
%$$
%Note that the normalized radius process is 
%$$
%d^{1/2}\left(\frac{\|\xi_{d^{1/2}t}^d\|^2}{2d}-1\right)=\int_0^tS_s^d\dif s. 
%$$
% \joris{Can we think about the notation? It is somewhat confusing to distinguish between processes $S$ and $\mathcal S$ at the moment. Proposals:
% \begin{itemize} \item $S_{\textsc{ZZ},t}$ and $S_{\textsc{BPS},t}$; dropping the subscript whenever there is no confusion.
% \item $S^{\textsc{ZZ},d}_{t}$, $S^{\textsc{BPS},d}_t$
% \item $S^{\textsc{Z},d}_{t}$, $S^{\textsc{B},d}_t$
% \end{itemize}}
% \kengo{I agree. They are hard to distinguish in presentation with video projector or blackboard. 
% I prefer $S^{\bouncy,d}_{d^{-1/2}t}$ to $S_{\mathrm{BPS}}^d(d^{-1/2}t)$, but any opinion is welcomed. 
% I am not still happy with $S$ and $S^{\bouncy}$, but still better than $S$ and $\mathcal{S}$. }
Additionally we will study the number of switches (jumps)  
$$
\sum_{0\le t\le T}1_{\{\Delta S_t^{\zigzag,d}\neq 0\}},\ 
\sum_{0\le t\le T}1_{\{\Delta S_t^{\bouncy,d}\neq 0\}}
$$
up to $T>0$, where  $\Delta X_t=X_t-X_{t-}$.  
Finally, we will check the convergence rates for the coordinate processes. 

\begin{remark}[Proof strategy]
In the high-dimensional MCMC literature, 
as in \cite{RGG}, the Trotter-Kato type approach is the most popular which uses convergence of generators to prove 
% finite dimensional
convergence of Markov processes.
% , where the finite dimensional convergence means the convergence of $(X^d_{t_1},\ldots, X^d_{t_k})$ for any 
% $t_i\in D, k\in\mathbb{N}$ for a dense subset $D$ of $\mathbb{R}_+$. 
Classical literature is \cite{MR838085}. %In this paper, we follow Stroock-Varadhan's martingale problem approach \joris{but this is also done in \cite{MR838085}?}. 
%This approach is natural to non-Markovian processes. 
%\joris{I propose: 
In this paper, we closely follow the semimartingale characteristics approach taken in \cite{JS}, which is natural to the non-Markovian processes which arise in our analysis. %}
% Furthermore 
% some additional effort enables us to prove 
% tightness. %\joris{Is this what you meant?}\kengo{I changed this sentence a little bit}
% \joris{What does ``finite dimensional convergence'' mean?}\kengo{I added a short explanation as above.}
See Section IX.2a of \cite{JS} for the connection between the two approaches. 
\end{remark}

\subsection{Asymptotic limit of the Zig-Zag sampler}\label{subsec:ss}

In this section, we study the asymptotic properties of the Zig-Zag sampler. All the proofs are postponed to Appendix \ref{asec:zz}. 
To state the first results, we introduce a stationary piecewise deterministic jump process 
\begin{align}\label{eq:limitzigzag}
\mathcal{T}_t
~& =\mathcal{T}_0+t - 2\int_{(0,t]\times\mathbb{R}_+} \mathcal{T}_{s-}~1_{\{z\le \mathcal{T}_{s-}\}}~N(\dif s,\dif z)
\end{align}
with $\mathcal{T}_0\sim \mathcal N(0,1)$. 
The process has the infinitesimal generator
\begin{equation}\label{eq:generator_t}
	Gf(x)=f'(x)+x^+(f(-x)-f(x)). 
\end{equation}
Therefore $\mathcal N(0,1)$ is the invariant distribution of $\mathcal{T}=(\mathcal{T}_t)_{t\ge 0}$ by Proposition 4.9.2 of \cite{MR838085}. In particular, 
$\mathcal{T}$ is a strictly stationary process. 
Set 
\begin{equation} \label{eq:covariance} K(s,t)=\mathbb{E}[\mathcal{T}_s\mathcal{T}_t].\end{equation}
This covariance kernel will play an important role in this work. Some properties are collected in Proposition \ref{prop:covariance}.

% \joris{It would be wonderful if we could get a better handle on this covariance kernel $K(s,t)$. Can we understand the difference with the OU covariance kernel?}
% \kengo{I added Lemma \ref{lem:non_markov}.}
% 
Our first result is the asymptotic limit of the angular momentum process $S^{\zigzag,d}=(S_t^{\zigzag,d})_{t\ge 0}$. %The sequence of the angular momentum processes is tight.  %due to the time scaling $t\mapsto d^{1/2}t$. 
The limiting process is a non-Markovian Gaussian process, unlike most of the scaling limit results related to Markov chain Monte Carlo methods. We also discuss sample path continuity. See \cite[Eq (2.2.8)]{Karatzas1991} or Section \ref{asec:zz_angle} for the definition of local $\alpha$-H\"older continuity. 

\begin{thm}\label{theo:zigzag_limit}
The process $S^{\zigzag,d}$
converges to $S^{\zigzag}=(S_t^{\zigzag})_{t\ge 0}$ in distribution in Skorohod topology where 
$S^{\zigzag}$ is the non-Markovian stationary Gaussian process with mean $0$ and covariance function $K(s,t)$. The Gaussian process is locally $\alpha$-H\"older continuous for any $\alpha \in (0,1/2)$, but it is not locally 
$\alpha$-H\"older continuous for any $\alpha\ge 1/2$. 
%\kengo{I will try to show that we can not take $\alpha>1/2$. }
% \joris{Excellent. But that would mean it will not be driven by a fractional Brownian motion, since the fBM which is $1/2$-H\"older continuous is Brownian motion, right?}\kengo{As in your plot, $S_t^{Z,d}$ behaves like a diffusion process, and so this result is natural to me. On the other hand, $Y_t^{Z,d}$ is like a fractional Brownian motion with large Hurst parameter. This phenomena can partially explained by the fact that $Y_t^Z$ is  $1$-H\"older continuous.} 
\end{thm}

The second result concerns the number of switches for the Zig-Zag process, indicating the computational cost of the process. The following results show that the process $S^{\zigzag,d}$, the number of switches 
per unit time is $O(d)$. 

\begin{cor}\label{cor:zigzag_switches}
The  number of switches of $S^{\zigzag,d}$ over a time interval $(0,T]$ scaled by $d^{-1}$ satisfies
$$
d^{-1}\sum_{0\le t\le T}1_{\{\Delta S_t^{\zigzag,d}\neq 0\}}\longrightarrow \frac{T}{\sqrt{2\pi}} \quad \text{in probability as $d \rightarrow \infty$}
$$
for any $T>0$. 
\end{cor}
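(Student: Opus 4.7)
The plan is to express $\sum_{0\le t\le T} 1_{\{\Delta S_t^{\zigzag,d}\neq 0\}}$ as $\tilde N_{d^{1/2}T}$, where $\tilde N$ is the counting process (in the original unscaled time) of component-velocity flips, and then to decompose $\tilde N$ as the sum of its compensator and the associated martingale. Because each flip of $v_i$ produces an increment $-2\xi_i v_i \ne 0$ almost surely in $\langle \xi, v\rangle$, every flip corresponds to exactly one jump of $S^{\zigzag,d}$; moreover the $i$-th flip counter has predictable intensity $(\xi_{i,s} v_{i,s})^+$, so $\tilde N$ has compensator
\[
A_t = \int_0^t \Lambda(\xi_s^{\zigzag,d}, v_s^{\zigzag,d})\,\dif s, \qquad \Lambda(\xi,v) := \sum_{i=1}^d (\xi_i v_i)^+.
\]
It therefore suffices to prove that both $d^{-1} A_{d^{1/2}T}\to T/\sqrt{2\pi}$ and $d^{-1}(\tilde N - A)_{d^{1/2}T}\to 0$ in probability.

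For the compensator, I would exploit stationarity together with a sign-symmetry argument: since $v_i \in \{\pm d^{-1/2}\}$ independent of $\xi$ and each $\xi_i\sim\mathcal{N}(0,1)$ is symmetric, $\Lambda(\xi_s^{\zigzag,d}, v_s^{\zigzag,d})$ has the same law as $d^{-1/2}\sum_{i=1}^d \xi_i^+$ with i.i.d.\ standard normal $\xi_i$. This identifies the expected intensity as $d^{1/2}/\sqrt{2\pi}$ per unit original time, so $\EE[d^{-1}A_{d^{1/2}T}] = T/\sqrt{2\pi}$ exactly. Concentration then follows from a direct variance estimate: after the substitution $s = d^{1/2}u$ the rescaled compensator reads $\int_0^T F_d(u)\,\dif u$ with $\Var(F_d(u)) = O(1/d)$ uniformly in $u$ (from the LLN-style second-moment computation for $d^{-1}\sum_i \xi_i^+$), and Cauchy--Schwarz on $\mathrm{Cov}(F_d(u),F_d(u'))$ yields $L^2$-convergence of the time integral, hence convergence in probability.

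The martingale $M := \tilde N - A$ is a sum of compensated independent simple counting processes, so its predictable quadratic variation equals $A$ itself. Consequently $\EE[M_{d^{1/2}T}^2] = \EE[A_{d^{1/2}T}] = d T/\sqrt{2\pi}$, and $d^{-1} M_{d^{1/2}T} \to 0$ in $L^2$; combining this with the compensator limit closes the argument.

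The principal obstacle is the concentration of the random compensator, which requires the second-moment bound on $\Lambda$ in stationarity. This turns out to be manageable thanks to the product-form Gaussian invariant distribution combined with the symmetry of the velocity coordinates; the martingale control and the final assembly are routine applications of standard counting-process estimates.
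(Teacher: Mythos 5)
Your argument is correct, and it takes a genuinely different route from the paper. The paper's proof rewrites the switch count as $d^{-1}\sum_{i=1}^d \sum_{0\le t\le T}1_{\{\Delta \mathcal{T}^d_{i,t}\neq 0\}}$, where the rescaled coordinate processes $\mathcal{T}^d_{i}$ have already been identified (in the proof of Theorem \ref{theo:zigzag_limit}) as i.i.d.\ copies of the limit process $\mathcal{T}$ of \eqref{eq:limitzigzag}; the claim is then a direct law of large numbers for the i.i.d.\ integrable random variables ``number of switches of $\mathcal{T}_i$ on $[0,T]$'', with the mean computed exactly as you compute it, via the compensator $\int_0^T\mathbb{E}[\mathcal{T}_t^+]\,\dif t$. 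You instead work with the aggregate counting process in unscaled time and split it into compensator plus martingale, controlling each in $L^2$: the compensator by stationarity plus the $O(1/d)$ variance of $d^{-1}\sum_i(\xi_iv_i)^+$ (which again uses coordinate independence, but only through one- and two-point marginals rather than through equality of path laws), and the martingale through $\langle M\rangle = A$. Both arguments need the preliminary observation that every velocity flip is a genuine jump of $S^{\zigzag,d}$ and that no two flips coincide a.s.; you state the first explicitly and the paper uses both implicitly. The paper's route is shorter because the i.i.d.\ identification is already available; yours is more robust, since it would survive in settings where the coordinates are exchangeable and weakly correlated but not exactly independent copies of a fixed limit process.
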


The third result is the analysis of the negative log-target density process. As for the angular momentum process, the limiting process is a non-Markovian Gaussian process. We also discuss the sample path property. We call a process differentiable if there is a modification such that each path is differentiable almost surely. See Section \ref{asec:zz_coordinate} for the definition. 

\begin{thm}\label{theo:zigzag_integral_limit}
% The normalized negative log-target density process 
The negative log-density process
$$
Y_t^{\zigzag,d}:=\sqrt{d}\left(\frac{\|\xi_t^{\zigzag,d}\|^2}{d}-1\right)
$$
converges to a stationary Gaussian process $Y^{\zigzag}$ with mean $0$ and covariance function
$$
L(s,t)=2-2\int_{s}^t\int_{s}^t K(u,v)\, \dif u\, \dif v.
$$
%There is a representation of $Y^{\zigzag}$ such that 
%\begin{equation}\label{eq:expression_y}
%	Y_t^{\zigzag}=Y_0^{\zigzag}+2\int_0^t S_u^{\zigzag}\, \dif u\ (t> 0)
%\end{equation}
%and for any finite tuple $0<t_0<t_1<\ldots <t_k$, 
%$(Y_0^{\zigzag}, S_{t_1}^{\zigzag},\ldots, S_{t_k}^{\zigzag})$ is normally distributed. 
% \joris{(1) So do we have joint convergence of $(S_t^{\zigzag,d}, Y_t^{\zigzag,d})$ to $(S_t^{\zigzag}, Y_t^{\zigzag})$? (2) Do we have that $(Y_{t_1}^{\zigzag}, \dots, Y_{t_k}^{\zigzag}, S_{t_1}^{\zigzag}, \dots, S_{t_k}^{\zigzag})$ has a multivariate normal distribution (which, by taking $t_1= 0$, is stronger than what you write)?}\kengo{I added some explanation in the proof. }
% \kengo{I removed representation part since the proof is bit long although the conclusion is not so informative. }
%Moreover 
% $Y_0^{\zigzag}\sim N(0,2)$ and
%$$
%\mathbb{E}[Y_0^{\zigzag} S_t^{\zigzag}]=-2\int_0^t K(0,u)\, \dif u. 
%$$
Moreover the Gaussian process $Y^{\zigzag}$ is differentiable with respect to the time index $t$. 
\end{thm}

Finally we consider the first coordinates of $\xi$. Let 
$$
\pi_k(\xi)=\pi_k^d(\xi)=(\xi_1,\ldots, \xi_k) \quad  \text{for} \, \xi=(\xi_1,\ldots, \xi_d)\in\mathbb{R}^d,
$$
denote the operation of taking the first $k \in \{1,\ldots, d\}$ components of a $d$-dimensional vector. If $k>d$, then we set
$$
\pi_k(\xi)=\pi_k^d(\xi)=(\xi_1,\ldots, \xi_d, \overbrace{0,\ldots, 0}^{k-d}). 
$$
Let $\phi_k(x)$ be the density of the $k$-dimensional standard normal distribution $\mathcal N(0,I_k)$. 

\begin{thm}\label{theo:zigzag_first_component}
For any $k\in\mathbb{N}$ and $d \geq k$, the law of the process $Z_t^{\zigzag, d, k}:=\pi_k(\xi_{t}^{\zigzag,d})$
does not depend on $d$, and $Z^{\zigzag,d,k}$ is an ergodic process. 
In particular, for any $\mathcal N(0,I_k)$-integrable function $f:\mathbb{R}^k\rightarrow\mathbb{R}$, we have 
$$
\frac{1}{T}\int_0^T f (Z_t^{\zigzag,d,k})~\dif t\longrightarrow \int_{\mathbb{R}^k} f(x)\phi_k(x)\dif x\ \text{in\ probability as $T \rightarrow \infty$}.
$$
%for any function $d=d(T)\ge k$. 

Finally we remark on the joint limit process of the angular momentum, the negative log-density and the $1$st coordinate processes. The joint process of the first two processes has a Gaussian limit by the central limit theorem of the processes. The diagonal components of the corresponding covariance kernel are $K(s,t)$ and $L(s,t)$. The off-diagonal component is the covariance of the angular momentum and the log negative-density processes. Since $\dif Y_t^{\zigzag,d}=2S_t^{\zigzag,d}$,  off-diagonal component of the corresponding covariance kernel is 
\[
M(s,t):=\mathbb{E}[Y_s^{\zigzag}S_t^{\zigzag}]=2^{-1}\frac{\partial  L(s,t)}{\partial t}=-2\int_s^tK(t,u)\dif u. 
\]
The $1$st coordinate process is asymptotically independent from other processes. 
\end{thm}

\subsection{Asymptotic limit of the Bouncy Particle Sampler}\label{subsec:bps}

In this section, we study the asymptotic properties of the Bouncy Particle Sampler. All the proofs are postponed to Appendix \ref{asec:bps}. 
%We consider the limit of the Bouncy Particle Sampler. 
The limiting process of the angular momentum is represented as 
\begin{equation}
\label{eq:limitBPS}
\begin{split}
S_t^{\bouncy}
~ &=~S_0^{\bouncy} + t - 2\int_{(0,t]\times\mathbb{R}_+} S^{\bouncy}_{s-}~1_{\{z\le S^{\bouncy}_{s-}\}}~N(\dif s,\dif z)\\
&\quad+\int_{(0,t]\times\mathbb{R}}(z-S^{\bouncy}_{s-})%~1_{\{z\le \rho\}}
~R(\dif s,\dif z)
\end{split}
\end{equation}
where $R$ is the random measure with the intensity measure  
$$
\mathbb{E}[R(\dif s,\dif z)]~=~\rho~\dif s~\phi(z) \dif z
$$
where $\phi$ denotes the $\mathcal N(0,1)$ density function. 
The process $S^{\bouncy}=(S^{\bouncy}_t)_{t\ge 0}$ has the infinitesimal generator
\begin{equation}\label{eq:BPSgenerator}
	Hf(x)=f'(x)+x^+(f(-x)-f(x))+\rho\left(\int_\mathbb{R} \phi(y)f(y)\, \dif y-f(x)\right). 
\end{equation}
The process is $\mathcal N(0,1)$-invariant by Proposition 4.9.2 of \cite{MR838085}. 

\begin{thm}\label{theo:bps_limit}
The process $S^{\bouncy,d}$
converges in law to $S^{\bouncy}$. 
\end{thm}

% So (without refreshment) this is not a limiting argument but holds for all $d$. Is this discussed somewhere already?}
% \kengo{It is worth to remark. I added as follows. }
% 
In fact, if $\rho=0$, then the law of $S^{\bouncy,d}$ is \emph{identical} to the law of $S^{\bouncy}$ for any $d \in \NN$. Indeed,
say $g(\xi,v) = f (\langle \xi,v\rangle)$ for $(\xi, v) \in E^{\bouncy,d}$. Then (without refreshment), for $s = \langle \xi,v \rangle$,
\begin{align*} (L^{\bouncy,d} g)(\xi,v) & = \langle v, \nabla_\xi g(\xi,v) \rangle + \langle \xi, v\rangle^+ (g(\xi,-v) - g(\xi,v)) \\
 & = f'(\langle \xi,v \rangle) \langle v, v \rangle + \langle \xi,v \rangle^+ (f(-\langle \xi, v \rangle) - f(\langle \xi, v \rangle)) \\
 & = f'(s) + (s)^+ (f(-s) - f(s)),
\end{align*}
which establishes that if $\rho = 0$, then $S^{\bouncy,d}$ is a Markov process with generator $H$.
%Moreover, if $\rho=o(1)$, then $S^{\bouncy,d}$ converges in probability to $\mathcal{T}$. 
%\joris{It seems that this final statement requires a proof -- better to leave it out?}
%\kengo{Thanks. }

%, i.e. the Markov process with generator $H$ where $\rho = 0$; see Section~\ref{sec:ZZ}.
% \joris{Do you mean ``to $(S_t^{\bouncy})_{t \ge 0}$?''}
% \kengo{These two processes are the same when $\rho=0$. I want to emphasize that the limit has a different property when $\rho=0$. }
% \joris{The refreshment is a bit more tricky since $x$ will influence the distribution of the new position in $y$-space. Wouldn't a generator argument suffice (instead of semimartingale characteristics -- a topic much less familiar with the intended audience)?}
% \kengo{What we need is to handle with the refreshment term. I like semimartingale argument, but it is nice if there is a simpler proof. }
% 
% \joris{Perhaps something to remark: we do not need refreshment for the angle process to be ergodic. So from just Theorem~\ref{theo:bps_limit} we could still choose $c \sim d^{-\alpha}$ for some $\alpha > 0$, instead of $c \sim d^0$ -- perhaps this would be beneficial for bouncy in the other limiting results.}

\begin{cor}\label{cor:bps_switches}
The expected number of switches of $S^{\bouncy, d}$ over a time interval $(0,T]$  does not depend on $d$ and is given by
$$
\mathbb{E}\left[\sum_{0\le t\le T}1_{\{\Delta S^{\bouncy,d}_t\neq 0\}}\right]= T\left(\frac{1}{\sqrt{2\pi}}+\rho\right).
$$
\end{cor}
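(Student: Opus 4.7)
The plan is to identify the predictable intensity of the jumps of $S^{\bouncy,d}$ from the Poisson-integral construction and integrate it against the stationary law. The process $S^{\bouncy,d}_t = \langle \xi_t^{\bouncy,d}, v_t^{\bouncy,d}\rangle$ changes value only when $v^{\bouncy,d}$ jumps, which happens either at a bounce (driven by $N$, with rate $\lambda^{\bouncy,d}(\xi,v) = \langle\xi,v\rangle^+$) or at a refreshment (driven by $R_d$, with rate $\rho$). At a bounce, $v \mapsto \kappa^d(\xi,v)$ sends $\langle\xi,v\rangle \mapsto -\langle\xi,v\rangle$; at a refreshment, $\langle\xi,v\rangle$ is replaced by $\langle\xi,u\rangle$ with $u\sim\psi_d$. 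Under the stationary law $\Pi^{\bouncy,d}$ the random variable $\langle\xi,v\rangle$ has a continuous distribution (verified below), so both operations change $S^{\bouncy,d}$ almost surely, and hence every bounce and every refreshment corresponds to a nonzero jump of $S^{\bouncy,d}$.

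From the construction of $v^{\bouncy,d}$ via the Poisson measures $N$ and $R_d$, the counting process $\sum_{0\le s\le t}1_{\{\Delta S^{\bouncy,d}_s \neq 0\}}$ has predictable compensator $\int_0^t \bigl((S^{\bouncy,d}_{s-})^+ + \rho\bigr)\,\dif s$. Taking expectations and applying Fubini yields
\begin{equation*}
\mathbb{E}\Bigl[\sum_{0\le t\le T} 1_{\{\Delta S^{\bouncy,d}_t \neq 0\}}\Bigr] = \int_0^T \mathbb{E}\bigl[(S^{\bouncy,d}_t)^+\bigr]\,\dif t + \rho T.
\end{equation*}
Under $\Pi^{\bouncy,d}$, $\xi\sim\mathcal N(0,I_d)$ is independent of $v$ uniform on $\mathfrak{S}^{d-1}$, so conditionally on $v$ we have $\langle\xi,v\rangle \sim \mathcal N(0,\|v\|^2) = \mathcal N(0,1)$, a distribution that does not depend on $d$. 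The elementary identity $\mathbb{E}[Z^+] = 1/\sqrt{2\pi}$ for $Z\sim\mathcal N(0,1)$ then delivers the claim.

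The only step that requires any care is the identification of the predictable compensator, which is standard for piecewise deterministic Markov processes driven by Poisson random measures and can be read off directly from the Poisson-integral representation already given in the paper (compare Theorem II.1.8 of \cite{JS}); everything else is exact evaluation. The dimension $d$ disappears from the answer precisely because $S^{\bouncy,d}_t$ is marginally standard normal under stationarity for every $d$.
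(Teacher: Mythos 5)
Your argument is correct and follows essentially the same route as the paper: identify the jump times of $S^{\bouncy,d}$ with the atoms of $N$ below the rate curve plus the atoms of $R_d$, compensate to get $\mathbb{E}\bigl[\int_0^T ((S_s^{\bouncy,d})^+ + \rho)\,\dif s\bigr]$, and evaluate using the fact that $S_t^{\bouncy,d}\sim\mathcal N(0,1)$ exactly under stationarity for every $d$. Your explicit check that each bounce and each refreshment produces a genuinely nonzero jump of $S^{\bouncy,d}$ (a.s.) is a point the paper leaves implicit, but it does not change the substance of the proof.
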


Unlike the Zig-Zag sampler, the number of switches is random even in the limit $d\rightarrow\infty$. This is the reason why we consider expectation rather than the limit in Corollary \ref{cor:bps_switches}. 
Note that each switch changes all components of the direction $v$. 
On the other hand, the Zig-Zag sampler only changes one component in each switch. 

%For the Bouncy Particle Sampler, it is not obvious how to use the central limit theorem approach. On the other hand, the process $S^{\bouncy,d}_t=\langle\eta_t^d,w_t^d\rangle$ has a simpler form: By It\^{o}'s formula together with (\ref{eq:reflection_property}), 
%\begin{align*}
%S^{\bouncy,d}_t&=~S^{\bouncy,d}_0 + t + \int_{(0,t]\times\mathbb{R}_+} (\langle \eta_s^d,R_d(y_{s-}^d)-w_{s-}^d\rangle)~1_{\{z\le g(x_{s-}^d)\}}~N(\dif s,\dif z)\\
%&\quad +\int_{(0,t]\times\mathbb{R}_+}\int_{\mathfrak{S}^{d-1}}(\langle \eta_s^d,u\rangle-\langle\eta_s^d,w_{s-}^d\rangle)~1_{\{z\le \rho\}}~R_d(\dif s,\dif z,\dif u)\\
%&=~S^{\bouncy,d}_0 + t - 2\int_{(0,t]\times\mathbb{R}_+} S^{\bouncy,d}_{s-}~1_{\{z\le S^{\bouncy,d}_{s-}\}}~N(\dif s,\dif z)\\
%&\quad +\int_{(0,t]\times\mathbb{R}_+}\int_{\mathfrak{S}^{d-1}}(\langle \eta_s^d,u\rangle-S^{\bouncy,d}_{s-})~1_{\{z\le \rho\}}~R_d(\dif s,\dif z,\dif u). 
%\end{align*}

\begin{thm}\label{theo:bps_ll}
The normalised negative log-target density process 
$$
Y_t^{\bouncy,d}:=\sqrt{d}\left(\frac{\|\xi^{\bouncy,d}_{dt}\|^2}{d}-1\right)
$$
converges to the stationary Ornstein-Uhlenbeck process $Y^{\bouncy}$
such that 
$$
\dif Y_t^{\bouncy}=-\frac{\sigma(\rho)^2}{4}~Y_t^{\bouncy}~\dif t+\sigma(\rho)~\dif W_t
$$
where 
$$
\sigma(\rho)^2 := 8\int_0^\infty e^{-\rho s} K(s,0)\dif s
$$
with $K(s,0)$ defined in~\eqref{eq:covariance}, and where $(W_t)_{t\ge 0}$ is the one-dimensional standard Wiener process. 
\end{thm}

The speed of convergence of the negative log-target density process is determined by 
$\sigma(\rho)$, and the speed is optimised when $\sigma(\rho)$ assumes its maximum. 

\begin{prop}\label{prop:limit_of_sigma}
The continuous function $\sigma(\rho)^2$ satisfies 
$$
\lim_{\rho\rightarrow+0}\sigma(\rho)^2=8\int_0^\infty K(s,0)\dif s=0,\ 
\lim_{\rho\rightarrow+\infty}\sigma(\rho)^2=0. 
$$
In particular, there exists $\rho^*\in (0,\infty)$ such that $\sigma(\rho^*)^2=\sup_{\rho\in (0,\infty)}\sigma(\rho)^2$. 
\end{prop}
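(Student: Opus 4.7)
My strategy is to isolate the deep content---the identity $\int_0^\infty K(s,0)\,\dif s = 0$---and reduce the rest of the statement to routine analysis of Laplace transforms. Continuity of $\rho \mapsto \sigma(\rho)^2$ on $(0,\infty)$ is a direct application of dominated convergence to the integrand $e^{-\rho s}K(s,0)$: on any neighbourhood $\{\rho \geq \rho_0 > 0\}$, the envelope $e^{-\rho_0 s}$ dominates (using $|K(s,0)| \leq K(0,0) = 1$, which follows from Cauchy--Schwarz and stationarity $\mathcal{T}_0 \sim \mathcal{N}(0,1)$). The same envelope forces $\sigma(\rho)^2 \leq 8/\rho$, hence $\lim_{\rho \to \infty}\sigma(\rho)^2 = 0$.

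For the limit at $0$, set $M(T) := \int_0^T K(s,0)\,\dif s$. Granted that $M$ is bounded with $M(T) \to 0$ as $T \to \infty$ (the key claim, treated below), one integration by parts yields, for every $\rho > 0$,
\[\sigma(\rho)^2 \;=\; 8\rho \int_0^\infty e^{-\rho s} M(s)\,\dif s,\]
and a standard Abelian split---fix $\varepsilon > 0$, pick $S$ so that $|M(s)| < \varepsilon$ for $s > S$, split at $S$, and use the uniform bound on $[0,S]$---forces $\sigma(\rho)^2 \to 0$ as $\rho \to 0^+$.

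The crux is then to prove $M(T) \to 0$. I exploit the Markovian representation $\mathcal{T}$ from \eqref{eq:limitzigzag} with generator $G$ in \eqref{eq:generator_t}. A direct check shows that $g(x) := x^2/2$ solves the Poisson equation $Gg = \mathrm{id}$: the symmetry $g(-x) = g(x)$ annihilates the jump term, leaving $Gg(x) = g'(x) = x$. Dynkin's formula then gives $\mathbb{E}_x[g(\mathcal{T}_T)] - g(x) = \int_0^T \mathbb{E}_x[\mathcal{T}_s]\,\dif s$; multiplying by $x\phi(x)$, integrating, and using $\int x\, g(x)\phi(x)\,\dif x = 0$, one obtains
\[M(T) \;=\; \tfrac{1}{2}\,\mathbb{E}\!\left[\mathcal{T}_0\, \mathcal{T}_T^2\right].\]
Cauchy--Schwarz with the stationary moment $\mathbb{E}[\mathcal{T}_T^4] = 3$ then gives the uniform bound $|M(T)| \leq \sqrt{3}/2$, securing the IBP hypothesis, while ergodicity of the 1D PDMP $\mathcal{T}$ forces $\mathbb{E}[\mathcal{T}_0\, \mathcal{T}_T^2] \to \mathbb{E}[\mathcal{T}_0]\,\mathbb{E}[\mathcal{T}_0^2] = 0$.

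I expect the sole obstacle to lie in this last step: upgrading pointwise ergodic convergence $P_T h(x) \to 1$ (for $h(y) = y^2$) to convergence of the mixed expectation $\mathbb{E}[\mathcal{T}_0\, P_T h(\mathcal{T}_0)]$ against the signed weight $x\phi(x)$. My plan is to secure uniform integrability via a Lyapunov/drift bound (a natural candidate being $V(x) = 1 + x^2$) combined with a Doeblin minorisation on a large compact set---exploiting the unit drift and the deterministic recurrence to a neighbourhood of the origin for $x < 0$---to obtain geometric ergodicity with enough control to apply dominated convergence. Finally, existence of $\rho^*$ is immediate: a Tauberian estimate ($\rho\sigma(\rho)^2 \to 8 K(0,0) = 8$ as $\rho \to \infty$, since $\rho e^{-\rho s}$ is an approximate identity at $0$ and $K$ is continuous there) shows $\sigma(\rho)^2 > 0$ for large $\rho$, so the nonnegative continuous function $\sigma^2$, with zero limits at both endpoints, attains its supremum at an interior point.
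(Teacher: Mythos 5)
Your overall strategy is sound and reaches the right conclusion, but it is considerably more self-contained than the paper's argument, which disposes of the proposition in one line: Proposition \ref{prop:covariance} already supplies $\int_0^\infty K(s,0)\,\dif s=0$, and the exponential decay $|K(s,0)|\le C\gamma^{s}$ (established en route to that proposition via the $V$-uniform ergodicity of $\mathcal T$ in Theorem \ref{theo:ergodicity_t}) makes $|K(\cdot,0)|$ integrable, so both limits follow from dominated convergence applied directly to $e^{-\rho s}K(s,0)$. Your Abelian route --- integrating by parts to $8\rho\int_0^\infty e^{-\rho s}M(s)\,\dif s$ and using only boundedness of $M$ plus $M(T)\to 0$ --- is a genuinely weaker set of hypotheses than absolute integrability of $K(\cdot,0)$, and your re-derivation of $M(T)=\tfrac12\EE[\mathcal T_0\mathcal T_T^2]$ via the Poisson equation $Gg=\mathrm{id}$ for $g(x)=x^2/2$ is exactly the paper's computation for \eqref{eq:integral_k} (there obtained pathwise from \eqref{eq:square_t}). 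Your closing observation that $\rho\,\sigma(\rho)^2\to 8K(0,0)=8$, hence $\sigma^2>0$ for large $\rho$, is a nice addition the paper omits; strictly speaking the existence of a maximiser does not require it, since a nonnegative continuous function vanishing at both ends of $(0,\infty)$ attains its supremum in any case.

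The one concrete misstep is your candidate Lyapunov function $V(x)=1+x^2$ for the process $\mathcal T$. By the very symmetry you exploit to solve the Poisson equation, the jump term of $G$ annihilates any even function, so $GV(x)=2x$, which is positive and unbounded for $x>0$: the drift condition \eqref{eq:drift_criterion} fails. (The quadratic $V$ works for $S^{\bouncy}$, whose generator $H$ has the refreshment term $\rho(1-x^2)$ to pull it down; it does not work for $\mathcal T$.) This is why Theorem \ref{theo:ergodicity_t} uses $V(x)\asymp e^{|x|}$, for which $GV(x)=(2-x)e^x\le -V(x)$ for $x>4$ and $GV(x)=-V(x)$ for $x\le 0$. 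With that $V$, the bound \eqref{eq:convergence_of_h} gives $|\EE[\mathcal T_0(\mathcal T_T^2-1)]|\le C\gamma^T\,\EE[|\mathcal T_0|V(\mathcal T_0)]<\infty$ and your final step goes through; with the quadratic $V$ it would not.
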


The covariance function $K(t,0)$ and the diffusion coefficients $\sigma(\rho)^2$ do not admit simple  expressions. These functions can be written as infinite sums of convolutions, and numerical evaluation is difficult. On the other hand, simple Monte Carlo calculations yield good estimates of these functions (Figure \ref{fig:functions}). The Monte Carlo estimates also provide that the optimal choice of $\rho^*$ is around $1.424$. The ratio of the expected number of the refreshment jumps to that of overall jumps is 
\begin{equation}\label{eq:optimal_rho}
	\frac{\rho^*}{\frac{1}{\sqrt{2\pi}}+\rho^*}\approx 0.7812.  
\end{equation}
Note that the choice of $\rho$ is not scale invariant, that is, 
if we apply the target distribution with the negative log-density $\Psi^d(\xi)=\|\xi\|^2/(2\gamma^2)$, the optimal choice depends on $\gamma>0$. However the above jump ratio does not depend on the scale so we can use the $78.12\%$ rule as a criterion for the choice of the refreshment rate $\rho>0$.

% 
% \joris{You have obtained explicit dependence on $\rho$ for BPS results. Can we make sence of BPS with $\rho = 0$ in the above table? The result for the 1st coordinate will change qualitatively (but of course there are irreducibility issues).}
% \kengo{
% Any Roberts, Gelman, Gilks 1997 type criterion? 
% Suppose that the target distribution has the i.i.d. structure, that is, 
% $$
% \Psi^d(\eta)=-\sum_{i=1}^d\log f(\eta_i)
% $$
% for some probability density function $f(x)$ with Fisher information matrix
% $I=\int [(\log f)']^2f =-\int [(\log f)'']f$. Then I believe that the process 
% $\{Y_t^{\bouncy,d}\}$ has the same diffusion limit as the isonormal Gaussian case, where $K(s,t)$
% is the covariance function of 
% \begin{align*}
% \mathcal{T}_t
% ~& =\mathcal{T}_0+I~t - 2\int_{(0,t]\times\mathbb{R}_+} \mathcal{T}_{s-}~1_{\{z\le \mathcal{T}_{s-}\}}~N(\dif s,\dif z)
% \end{align*}
% Here, the Fisher information matrix $I$ corresponds to the time scale for the process $\mathcal{T}$. 
% }
% 
% 
% 
% \joris{I don't understand this comment as a reply to my question. This is about generalizing to factorized distributions right? How about letting $\rho \rightarrow 0$?}
% \kengo{My comment in the above is related to your comment but it does not answer your question. If $\rho\rightarrow 0$, I suspect that the process $(\tilde{Y}_t^d)_t$ will be degenerate. If we use further (suboptimal) time scaling, we may have a non-degenerate limit. }

\begin{figure}
\includegraphics[width=0.45\textwidth,bb=0 0 432 216]{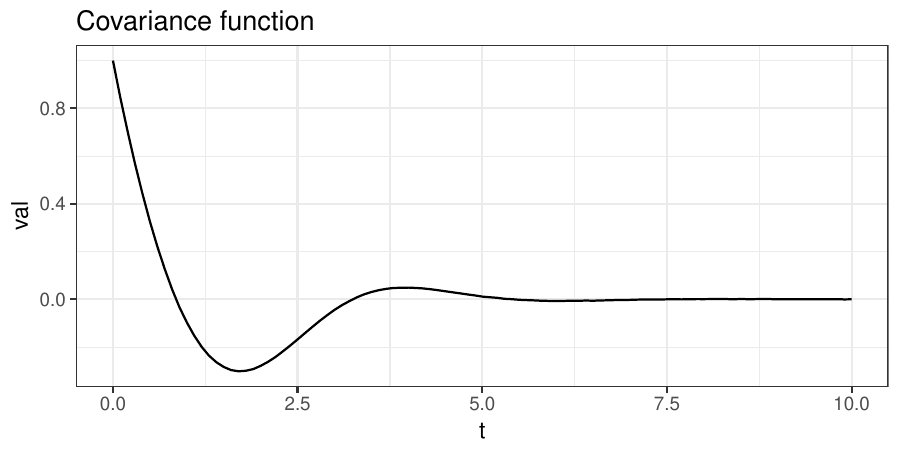}
\includegraphics[width=0.45\textwidth,bb=0 0 432 216]{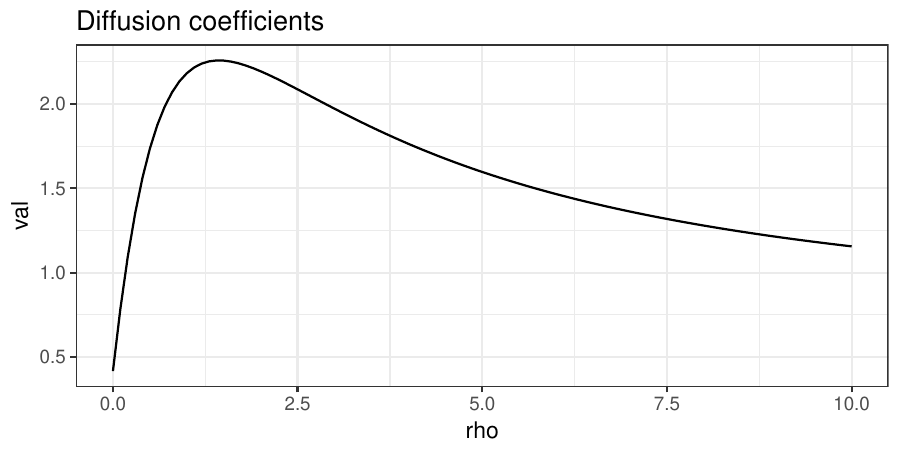}
 \caption{Monte Carlo estimated values of the covariance function $K(t,0)$ (left) and the diffusion coefficient $\sigma(\rho)^2$ (right). }%\kengo{I am checking the right-hand side plot by using another Monte Carlo calculations by RZigzag. Currently, the output figure from RZigzag is different from the above. }}
 \label{fig:functions}
\end{figure}

Finally, we consider the coordinate process convergence for the bouncy particle sampler. 

\begin{thm}\label{theo:bps_coordinate}
For any $k\in\mathbb{N}$, the process $Z^{\bouncy,d,k}=(Z_t^{\bouncy,d,k})_{t\ge 0}$ defined by $Z_t^{\bouncy,d,k}:=\pi_k(\xi_{dt}^{\bouncy,d})$
converges to the stationary Ornstein-Uhlenbeck process $Z^{\bouncy,k}$ satisfying the SDE
$$
\dif Z_t^{\bouncy,k}=-\rho^{-1}Z_t^{\bouncy,k}\dif t +\sqrt{2\rho^{-1}}~\dif W_t^k
$$
for $k\in\mathbb{N}$ where $W^k$ is the $k$-dimensional standard Wiener process. 
In particular, any bounded continuous function $f:\mathbb{R}^k\rightarrow\mathbb{R}$, we have 
\begin{equation}\label{eq:BPS_LLN}
\frac{1}{T}\int_0^T f(Z_t^{\bouncy,d,k})\dif t\longrightarrow_{d,T\rightarrow\infty} \int_{\mathbb{R}^k} f(x)\phi_k(x)\dif x\ \mathrm{in\ probability}.
\end{equation}
\end{thm}

Note that (\ref{eq:BPS_LLN}) is a double limit. In other words, for all $\varepsilon >0$ and $\gamma > 0$, there is a $K > 0$ such that for all $d > K$, $T > K$, 
\[ \PP \left( \left| \frac{1}{T}\int_0^T f(Z_t^{\bouncy,d,k})\dif t- \int_{\mathbb{R}^k} f(x)\phi_k(x)\dif x\right| > \gamma \right)<\varepsilon.\] This means that the limits with respect to $d$ and $T$ can be freely interchanged.
The robustness of the result in terms of the choice of $d$ and $T$ is important for Markov chain Monte Carlo analysis since the practitioner may use $T=d^2$ or $T=d^{10}$, or even $T=\sqrt{d}$. 

Finally we remark on the joint convergence of the angular momentum, the negative log-density and the $1$st component processes as in the Zig-Zag sampler case. Unlike the Zig-Zag sampler case, the angular momentum do not share the  time scaling with other processes. Therefore, only useful joint process is the combination of the negative log-density and the $1$st component processes, and these two processes are asymptotically independent.

\subsection{Optimal choice of the refreshment ratio for the bouncy particle sampler}\label{sec:optimal}

In Section \ref{subsec:bps} we discussed the optimal choice of the refreshment ratio which maximises the diffusion coefficient. However, it is also possible to estimate the coefficient directly. 
Let $0=\sigma_0<\sigma_1<\ldots<\sigma_N$ be the refreshment times until $T>0$. 
The diffusion coefficient can be estimated by
$$
\hat{\sigma}^2_N(\rho):=4\rho~N^{-1}\sum_{n=1}^N(\Psi^d(\xi_{\sigma_n}^{\bouncy,d})-\Psi^d(\xi_{\sigma_{n-1}}^{\bouncy,d}))^2
$$
and we may treat it as an efficiency criterion of the bouncy particle sampler. 
%Theoretically, the diffusion coefficient is maximized when (\ref{eq:optimal_rho}) is attained, but it is possible to monitor the estimated values of the coefficient to choose the refreshment $\rho$. 

For non-Gaussian, non-i.i.d. case, the meaning of the coefficient is unclear. However, we may still treat it as a criterion since if the value is large, we expect that the process moves relatively well. So we want to know the property of the coefficient other than the standard Gaussian case. 

For the general case, we still assume stationarity for the process, and assume the following. Let $\Psi^d$ be a thrice differentiable function, and assume the Lipschitz type condition
\begin{align}
\label{eq:Lipschitz_bound_delta}&\|\nabla\Psi^d(x)-\nabla\Psi^d(y)\|\le l(\|x-y\|), 
\end{align}
where $l:\mathbb{R}_+\rightarrow\mathbb{R}_+$ is a non-decreasing function. We use notation
\[
\nabla^2\Psi^d(\xi)[u,v]=\sum_{i=1}^d\frac{\partial^2\Psi^d(\xi)}{\partial \xi_i\partial \xi_j}u_iv_j,\ 
\nabla^3\Psi^d(\xi)[u,v,w]=\sum_{i=1}^d\frac{\partial^3\Psi^d(\xi)}{\partial \xi_i\partial \xi_j\partial \xi_k}u_iv_jw_k
\]
and $\nabla^2\Psi^d(\xi)[u^{\otimes 2}]=\nabla^2\Psi^d(\xi)[u,u]$, 
and  $\nabla^3\Psi^d(\xi)[u^{\otimes 3}]=\nabla^3\Psi^d(\xi)[u,u,u]$. 
We also assume consistency conditions
\begin{align}
\label{eq:Consistency_delta}&
\mathbb{E}\left[\left|\frac{\|\nabla\Psi^d(\xi_0^{\bouncy,d})\|^2}{d}-H\right|\right]~\longrightarrow_{d\rightarrow\infty}~0,\\ 
\label{eq:Consistency_nabla}&
\mathbb{E}\left[\left|\nabla^2\Psi^d(\xi_0^{\bouncy,d})[(v_0^{\bouncy,d})^{\otimes 2}]-H\right|\right]~\longrightarrow_{d\rightarrow\infty}~0
\end{align}
for $H>0$. The following non-explosive condition is also assumed: 
\begin{align}
\label{eq:Uniform_bound_nabla}
\sup_{\xi\in\mathbb{R}^d}\sup_{u\in\mathfrak{S}_{d-1}}\left|\nabla^2\Psi^d(\xi)[u^{\otimes 2}]\right|<C,\ \sup_{\xi\in\mathbb{R}^d}\sup_{u\in\mathfrak{S}_{d-1}}\left|\nabla^3\Psi^d(\xi)[u^{\otimes 3}]\right|<C	
\end{align}
for some $C>0$ and $\nabla^2\Psi^d(\xi)$ is positive definite matrix for each $\xi\in\mathbb{R}^d$. 

For i.i.d. scenario, we have an expression 
\[
\Psi^d(\xi)=\sum_{i=1}^d\psi(\xi_i)\ (\xi=(\xi_1,\ldots, \xi_d)
\]
for negative log density $\psi$ of some one-dimensional probability measure. Then \eqref{eq:Consistency_delta} is implied by 
\[
Z^{-1}\int_{-\infty}^\infty\psi'(\xi)^2e^{-\psi(\xi)}\dif\xi:=H<\infty
\]
by the law of large numbers. 
Also \eqref{eq:Consistency_nabla} is implied by the integration by parts formula
\[
Z^{-1}\int_{-\infty}^\infty\psi''(\xi)e^{-\psi(\xi)}\dif\xi=H
\]
by the law of large numbers together with the fact that $d(v_0^{\bouncy, d})^2$ follows the Beta distribution with parameters  $1/2$ and $(d-1)/2$. Therefore \eqref{eq:Consistency_delta} and \eqref{eq:Consistency_nabla} are usual regularity conditions for i.i.d. scenario. The last condition \eqref{eq:Uniform_bound_nabla} is also a usual regularity condition which is implied by $\sup_{\xi\in\mathbb{R}}|\psi''(\xi)|<\infty$ and $\sup_{\xi\in\mathbb{R}}|\psi'''(\xi)|<\infty$. 

In the following proposition, we denote $S_t^{\bouncy}(\rho)$
for the process defined in \eqref{eq:limitBPS} to specify the value of the refreshment rate. 

\begin{prop}\label{prop:general_phi}
Under the above assumption, the stochastic process $S^{\bouncy,d}=(S_t^{\bouncy,d})_{t\ge 0}$ defined by $S_t^{\bouncy,d}:=\langle\nabla\Psi^d(\xi_t^{\bouncy,d}),v_t^{\bouncy,d}\rangle$ converges to another stochastic process $(H^{1/2}S_{H^{1/2}t}(H^{-1/2}\rho))_{t\ge 0}$. In particular, 
$$
4\rho \mathbb{E}\left[(\Psi^d(\xi_{\sigma_n}^{\bouncy,d})-\Psi^d(\xi_{\sigma_{n-1}}^{\bouncy,d}))^2\right]~\longrightarrow_{d\rightarrow\infty} ~ H^{1/2}\sigma^2(H^{-1/2}~\rho). 
$$
\end{prop}

Proof of this proposition is in Section \ref{sec:non-gauss}. 
Since the limit of $S_H^{\bouncy,d}$ is the time-scale change of $S^{\bouncy}$, we can still use the $78.12\%$ rule (\ref{eq:optimal_rho}) for the choice of $\rho$. More precisely, the expected number of all jumps and that of the refreshment jumps up to time $T$ are 
\[
TH^{1/2}\left(\frac{1}{\sqrt{2\pi}}+H^{-1/2}\rho\right),\ 
T\rho
\]
with respectively. 
Therefore the fraction of the number of refreshment jumps is
\[
\frac{H^{-1/2}\rho}{\frac{1}{\sqrt{2\pi}}+H^{-1/2}\rho}. 
\]
On the other hand, $H^{1/2}\sigma^2(H^{-1/2}\rho)$ is maximised when $H^{-1/2}\rho=\rho^*$. Therefore we will have the same ratio $78.12\%$ as before when $H^{1/2}\sigma^2(H^{-1/2} \rho)$ is maximised. 

\subsection{Some properties of the limiting processes}\label{subsec:ergodiclimit}

In this section, we study the ergodic properties of the limiting processes $S^{\bouncy}$ and $\mathcal T$. All the proofs are postponed to Appendix \ref{asec:ergodicity}. First we show the existence and uniqueness of the solution of (\ref{eq:limitzigzag}) and (\ref{eq:limitBPS}) starting from 
$\mathcal{T}_0=x\in\mathbb{R}$ and $S_0^{\bouncy}=x\in\mathbb{R}$.  

By Proposition II.1.14 of \cite{JS} (see also III.1.24), there are stopping times $0<\tau_1<\tau_2<\cdots$ with $\mathcal{F}_{\tau_n}$-measurable random variables $Z_n\ (n\ge 1)$ such that
$$
N(\dif t,\dif z)=\sum_{n\ge 1}1_{\{\tau_n<\infty\}}\delta_{(\tau_n,Z_n)}(\dif t,\dif z). 
$$
As in Theorem IV.9.1 of \cite{MR1011252}, we can construct a unique solution in the time interval $[0,\tau_1]$ for (\ref{eq:limitzigzag}) by
\begin{align*}
	\mathcal{T}_t=
	\left\{
	\begin{array}{ll}
		x+t&0\le t <\tau_1\\
		\mathcal{T}_{\tau_1-}+1_{\{Z_n\le \mathcal{T}_{\tau_1-}\}}(-2\mathcal{T}_{\tau_1-}) &t=\tau_1. 
	\end{array}
	\right. 
\end{align*}
Similarly, we can construct a unique solution in the time interval $[0,\tau_n]$ for any $n\in\mathbb{N}$, and hence $\mathcal{T}_t$ is determined globally. 

For (\ref{eq:limitBPS}), in the same way, there are stopping times $0<\sigma_1<\sigma_2<\cdots$ 
with $\mathcal{F}_{\sigma_n}$-measurable random variables $W_n\ (n\ge 1)$ such that 
$\mathcal{L}(W_n|\mathcal{F}_{\sigma_n-})=\mathcal N(0,1)$ and 
$$
R(\dif t,\dif x)=\sum_{n\ge 1}1_{\{\sigma_n<\infty\}}\delta_{(\sigma_n,W_n)}(\dif t,\dif x). 
$$
Then we can construct the unique solution in time interval $[0,\sigma_1]$ by 
\begin{align*}
	S_t^{\bouncy}=
	\left\{
	\begin{array}{ll}
		\mathcal{T}_t&0\le t<\sigma_1\\
		W_1 &t=\sigma_1. 
	\end{array}
	\right. 
\end{align*}
Then, the process $(S_t^{\bouncy})_{t\in [\sigma_1,\sigma_2]}$ proceeds according to (\ref{eq:limitzigzag}) up to time $\sigma_2-$ starting from $S_{\sigma_1}^{\bouncy}=W_1$ in the same way as above. By iterating this procedure, we can construct a unique solution in time interval $[0,\sigma_n]$ for any $n\in\mathbb{N}$, and hence $S_t^{\bouncy}$ is determined globally. 

Let $\psi$ be a $\sigma$-finite measure on a measurable space $(E,\mathcal{E})$. Then 
a continuous time Markov process $X_t$ is said to be ($\psi$-)irreducible if 
$$
\psi(A)>0~\Longrightarrow~\mathbb{E}_x[\eta_A]>0\ (\forall x\in E)
$$
where $\eta_A$ is the occupation time defined by 
$$
\eta_A=\int_0^\infty 1_{\{X_t\in A\}}\dif t. 
$$
A simple sufficient condition for $\psi$-irreducibility is 
$$
\psi(A)>0~\Longrightarrow~P_t(x,A)=:\mathbb{P}_x(X_t\in A)>0\ \quad (\forall x\in E, t\ge T)
$$
for some $T>0$ which is also a sufficient condition for aperiodicity of the Markov process. 
A measurable set $C\in\mathcal{E}$ is said to be small if there exists $t>0$, $\epsilon>0$
and a probability measure $\nu$ such that 
$$
P_t(x,A)\ge \epsilon~\nu(A)\ \quad (\forall x\in C,\ \forall A\in\mathcal{E}). 
$$
This Markov process is said to be $V$-uniformly ergodic if 
there exists a probability measure $\Pi$, a constant $\gamma\in (0,1), C>0$ and $V:E\rightarrow [1,\infty)$ such that 
$$
\|P_t(x,\cdot)-\Pi\|_V\le CV(x)\gamma^t, 
$$ 
where 
$$
\|\nu\|_V:=\sup_{\substack{f: E \rightarrow \mathbb{R}\\  |f(x)|\le V(x)}}\left|\int_Ef(x)\nu(\dif x)\right|. 
$$ 
A simple Foster-Lyapunov type drift condition was established by \cite{MR1379163}. By using their results the following can be proved. 

\begin{thm}\label{theo:ergodicity_tildeS}
	The Markov process $S^{\bouncy}$ is irreducible, aperiodic and any compact set is a small set. Moreover, it is $V$-uniformly ergodic for $V(x)=1+x^2$. 
\end{thm}

%We call that a Markov process is positive recurrent if it is irreducible and there is a unique invariant probability measure. 
%It is equivalent to the existence of a drift criterion by  Theorem 1.1 of \cite{MR3580446}. 

\begin{thm}\label{theo:ergodicity_t}
The Markov process $\mathcal{T}$ is irreducible, aperiodic and any compact set is a small set. Moreover, it is $V$-uniformly ergodic for some $e^{|x|}\le V(x)\le 2e^{|x|}$. 
\end{thm}

% \joris{can we summarize properties of the kernel in a proposition? Apart from Lemma~\ref{lem:non_markov}, there are the properties discussed just below Theorem~\ref{theo:ergodicity_t}.}\kengo{I added this proposition. }
% \joris{can we determine H\"older continuity of the trajectories? Does $K$ satisfy some ODE?}
% \kengo{It is a good question! For continuity, the process $(S_t)_{t\ge 0}$ is $\alpha$-H\"older continuous for any $\alpha\in (0,1/2)$ and $(Y_t)_{t\ge 0}$ is differentiable. Thus locally $1$-H\"older continuous. This partially explains why $(Y_t)_t$ looks like a fractional Brownian motion with large hurst parameter. The fractional Brownian motion with the hurst parameter $h$ is $\alpha$-H\"older continuous for any $(0,h)$, and the usual Brownian motion is $h=1/2$.  For ODE, I tried, but I could not find it. }
% 
Since the process $\mathcal{T}$ only changes the sign of the process in each jump time, 
by It\^{o}'s formula, 
it satisfies 
\begin{equation}\label{eq:square_t}
	\mathcal{T}_t^2-\mathcal{T}_0^2=2\int_0^t\mathcal{T}_s\dif s,\quad \text{and} \quad  
	|\mathcal{T}_t|-|\mathcal{T}_0|=\int_0^t\mathrm{sgn}(\mathcal{T}_s)\dif s,\ 
\end{equation}
where $\mathrm{sgn}(x)$ is the sign of $x\in\mathbb{R}$. 
Moreover, the following result summarizes some properties of the covariance kernel of $\mathcal T$. See also Figure \ref{fig:functions}. 

\begin{prop}\label{prop:covariance}
The covariance function $K(s,t)=\mathbb{E}[\mathcal{T}_s\mathcal{T}_t]$
of $\mathcal{T}$ satisfies 
%\kengo{It may not be true!
%\begin{equation}\nonumber
%	\int_\mathbb{R}\int_\mathbb{R} |K(s,t)|\dif s<\infty
%\end{equation}
%} and 
\begin{equation}
\label{eq:integral_k}
	\int_0^\infty K(s,0)\dif s=0
\end{equation}
and 
\begin{equation}\label{eq:derivative_k}
	\partial_t K(t,0)|_{t=0}=- 4 \phi(0) = -2\sqrt{\frac{2}{\pi}},\ 
	\partial_t^2 K(t,0)|_{t=0}=1.  
\end{equation}
\end{prop}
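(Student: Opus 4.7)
The plan is to express $K(t,0)=\EE[\mathcal{T}_0\mathcal{T}_t]$ via the semimartingale/semigroup structure of $\mathcal{T}$ and to deduce the three claims from (i) the identity (\ref{eq:square_t}) combined with $V$-uniform ergodicity (Theorem \ref{theo:ergodicity_t}) for the zeroth-order claim (\ref{eq:integral_k}), and (ii) the generator $G$ of (\ref{eq:generator_t}) applied to $f(x)=x$ and to $f(x)=(x^+)^2$ for the two derivatives at $t=0$. For (\ref{eq:integral_k}), multiply (\ref{eq:square_t}) by $\mathcal{T}_0$ and take expectations; Fubini is legal because $\EE[|\mathcal{T}_0\mathcal{T}_s|]\le 1$ by Cauchy--Schwarz and stationarity, and $\EE[\mathcal{T}_0^3]=0$ since $\mathcal{T}_0\sim\mathcal{N}(0,1)$, yielding
\[
2\int_0^t K(s,0)\,\dif s \;=\; \EE[\mathcal{T}_0\mathcal{T}_t^2].
\]
Since $x^2\le e^{|x|}\le V(x)$ for the Lyapunov function $V$ of Theorem \ref{theo:ergodicity_t}, $V$-uniform ergodicity gives $|\EE[\mathcal{T}_t^2\mid\mathcal{T}_0=x]-1|\le CV(x)\gamma^t$ for some $\gamma<1$, and $\EE[|\mathcal{T}_0|V(\mathcal{T}_0)]<\infty$ (all Gaussian moments exist) gives $\EE[\mathcal{T}_0\mathcal{T}_t^2]\to\EE[\mathcal{T}_0]\cdot 1=0$ as $t\to\infty$, which is (\ref{eq:integral_k}).

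For (\ref{eq:derivative_k}) I first apply $G$ to $f(x)=x$, obtaining $Gf(x)=1-2(x^+)^2$ and the semimartingale decomposition $\mathcal{T}_t=\mathcal{T}_0+t-2\int_0^t(\mathcal{T}_s^+)^2\,\dif s+M_t$, with $M$ a square-integrable martingale under stationarity (the predictable bracket has finite expectation by finiteness of the Gaussian moment $\EE[(\mathcal{T}_0^+)^3]=2\phi(0)$). Multiplying by $\mathcal{T}_0$, taking expectations and using $\EE[\mathcal{T}_0 M_t]=\EE[\mathcal{T}_0\EE[M_t\mid\mathcal{F}_0]]=0$ gives $K(t,0)=1-2\int_0^t\EE[\mathcal{T}_0(\mathcal{T}_s^+)^2]\,\dif s$, hence
\[
\partial_t K(t,0)|_{t=0} \;=\; -2\,\EE[\mathcal{T}_0(\mathcal{T}_0^+)^2] \;=\; -2\int_0^\infty x^3\phi(x)\,\dif x \;=\; -4\phi(0).
\]
Differentiating once more and repeating the same argument with $G$ applied to $f(x)=(x^+)^2$ (absolutely continuous with $f'(x)=2x^+$, and $x^+(f(-x)-f(x))=-(x^+)^3$ because $x^+x^-=0$), I get $Gf(x)=2x^+-(x^+)^3$ and hence
\[
\partial_t^2 K(t,0)|_{t=0} \;=\; -2\,\EE\bigl[\mathcal{T}_0\bigl(2\mathcal{T}_0^+-(\mathcal{T}_0^+)^3\bigr)\bigr] \;=\; -2\bigl(2\cdot\tfrac{1}{2}-\tfrac{3}{2}\bigr) \;=\; 1,
\]
where $\EE[(\mathcal{T}_0^+)^2]=\tfrac{1}{2}$ and $\EE[(\mathcal{T}_0^+)^4]=\tfrac{3}{2}$ follow by the symmetry of $\mathcal{N}(0,1)$.

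The main technical obstacle is that the test functions $f(x)=x$ and $f(x)=(x^+)^2$ are unbounded and only piecewise smooth, so they are not in the core $C_0^1$ on which $G$ was formally defined. A truncation by smooth cut-offs reduces the generator identities to the core, and the resulting error terms (as well as the passage from local to true martingale and the interchanges of $\partial_t$, $\EE$ and time integrals) are controlled uniformly in $t$ by the $V$-uniform ergodicity of Theorem \ref{theo:ergodicity_t} together with the all-order integrability of the stationary $\mathcal{N}(0,1)$.
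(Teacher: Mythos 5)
Your proposal is correct and follows essentially the same route as the paper: \eqref{eq:integral_k} is obtained from \eqref{eq:square_t} together with the $V$-uniform ergodicity of $\mathcal{T}$, and the derivatives at $t=0$ are obtained by applying It\^o's formula (equivalently, the generator $G$) to $f(x)=x$ and then to $f(x)=(x^+)^2$ and evaluating Gaussian half-moments. The only difference is cosmetic: the paper works directly with difference quotients of $K$ and dominated convergence instead of your core/truncation discussion, but the underlying computation is identical.
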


\section{Experimental results}
\label{sec:experiments}

In order to investigate the dependence of our results on the distributional assumptions we will carry out computer experiments with respect to four different $d$-dimensional target distributions:
\begin{itemize}
	\item[(i)] The standard normal distribution
	\item[(ii)] A correlated Gaussian distribution, for which $\Var(\xi_i) = 1$ and $\operatorname{Cov} (\xi_i, \xi_j) = \rho$ (for $i \neq j$) where we take $\rho = 0.9$.
	\item[(iii)] $(\xi_1, \dots, \xi_d)$ are i.i.d. Student distributed with $\nu = 4$ degrees of freedom.
	\item[(iv)] $(\xi_1, \dots, \xi_d)$ is a $d$-dimensional spherically symmetric Student distribution with $\nu = 4$ degrees of freedom (see \cite{boisbunon2012class}).
\end{itemize}

For these four distributions we run both the Zig-Zag sampler and the Bouncy Particle Sampler with a refresh rate of 1.4.
In all cases the Zig-Zag process with speeds $v^{\mathrm Z} \in \{-1,+1\}^d$ is run on a fixed continuous time interval $[0,T]$ where $T = 100$. The Bouncy Particle Sampler with speeds $v^{\mathrm B} \in \mathfrak S^{d-1} = \{ v \in \RR^d : \| v \| = 1\}$ is run on a continuous time interval $[0,d \times T]$, which for the purpose of this section is equivalent to a BPS at speed increased by a factor $d$ run on a time interval $[0,T]$. These combinations of velocities and interval length are such that the processes with respect to the observables `first coordinate' and `log density' converge in distribution to their limiting processes as specified in this paper, at least for the standard normal distribution. All processes are started from a random sample from their respective stationary distributions.

In the experiments, for a given trajectory $(\xi(t))_{t \geq 0}$, we define the standardized error with respect to an observable $h$ as
\[ E_h = \frac{\frac 1 T \int_0^T h(\xi(s)) \, \dif s - \pi^d(h)}{\sqrt{\Var_{\pi^d}(h)}},\]
where $\pi^d$ represents the probability distribution with unnormalized negative log density $\Psi^d$.
In all experiments we know the exact value of $\pi^d(h)$ and $\Var_{\pi^d}(h)$ which are specified in Section~\ref{sec:exactvalues}. The continuous time integral representing the ergodic average (given the piecewise deterministic trajectory $(\xi(t))_{0 \leq t \leq T}$) is evaluated analytically, as discussed in Section~\ref{sec:ergodicaverage}. In the box plots below the standardized squared error is displayed for increasing dimension, based on 1000 experiments.

As to be expected from the theory developed in this paper the distribution of the standardized squared error for the standard normal distribution  (Figure~\ref{fig:standardnormal}) is stable with respect to increase in dimension. BPS seems to be more robust in the presence of correlations (Figure~\ref{fig:correlatedgaussian}), in particular with respect to the first coordinate.
In the case of a factorized heavy tailed distribution (Figure~\ref{fig:iidstudent}) we see that the behaviour of both Zig-Zag and BPS is very robust. Finally in the case of a spherically symmetric example (Figure~\ref{fig:symmstudent})  we see similar behaviour for the different samplers with a non-constant dependence on dimension.

\begin{figure}[ht!]
	{\centering  
		\begin{subfigure}{0.49 \textwidth}
			\includegraphics[width=\textwidth]{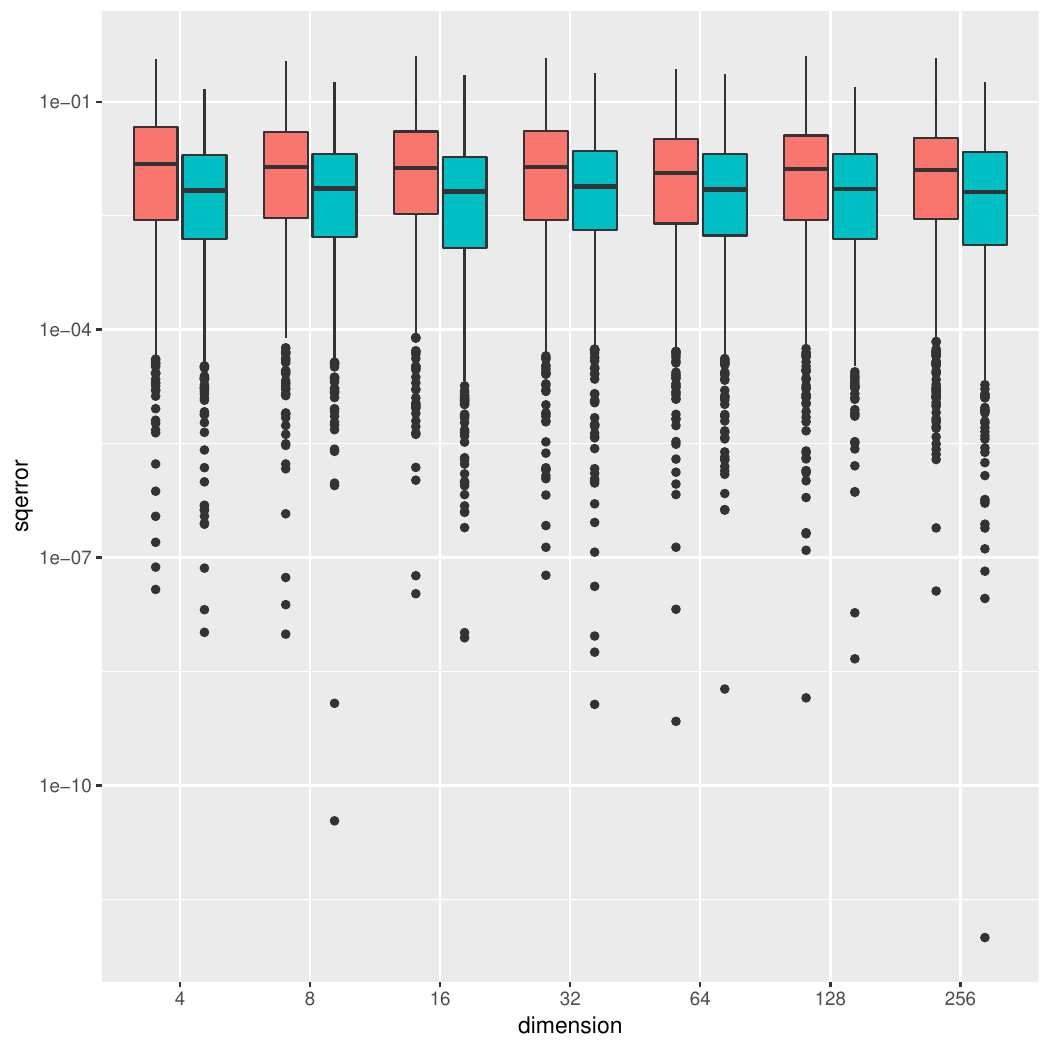}
			\caption{First coordinate}
		\end{subfigure}
		\begin{subfigure}{0.49 \textwidth}
			\includegraphics[width=\textwidth]{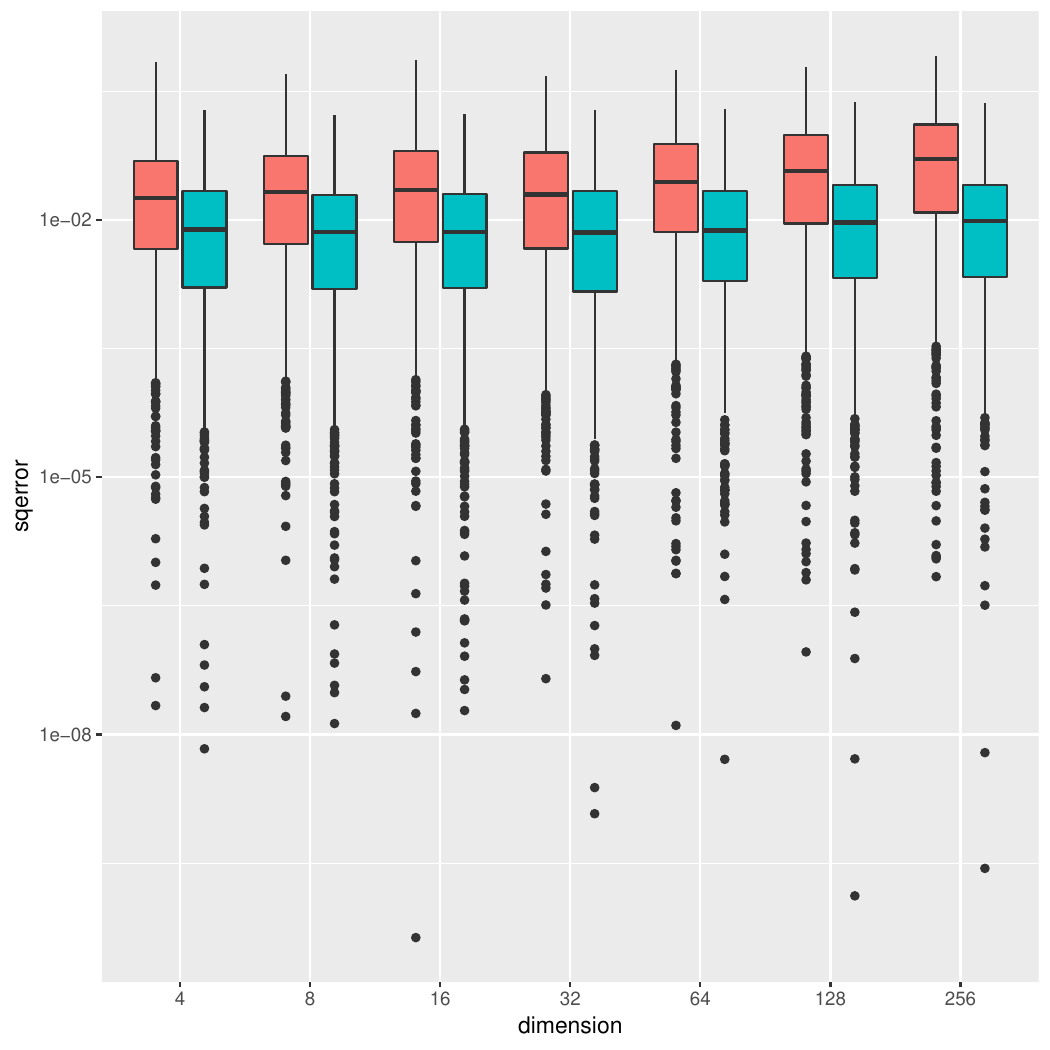}
			\caption{Log density}
		\end{subfigure}
		\caption{Standardized squared errors standard normal distribution. ZZ is cyan, BPS is red.}
		\label{fig:standardnormal}
	}
\end{figure}

\begin{figure}[ht!]
	{\centering  
		\begin{subfigure}{0.49 \textwidth}
			\includegraphics[width=\textwidth]{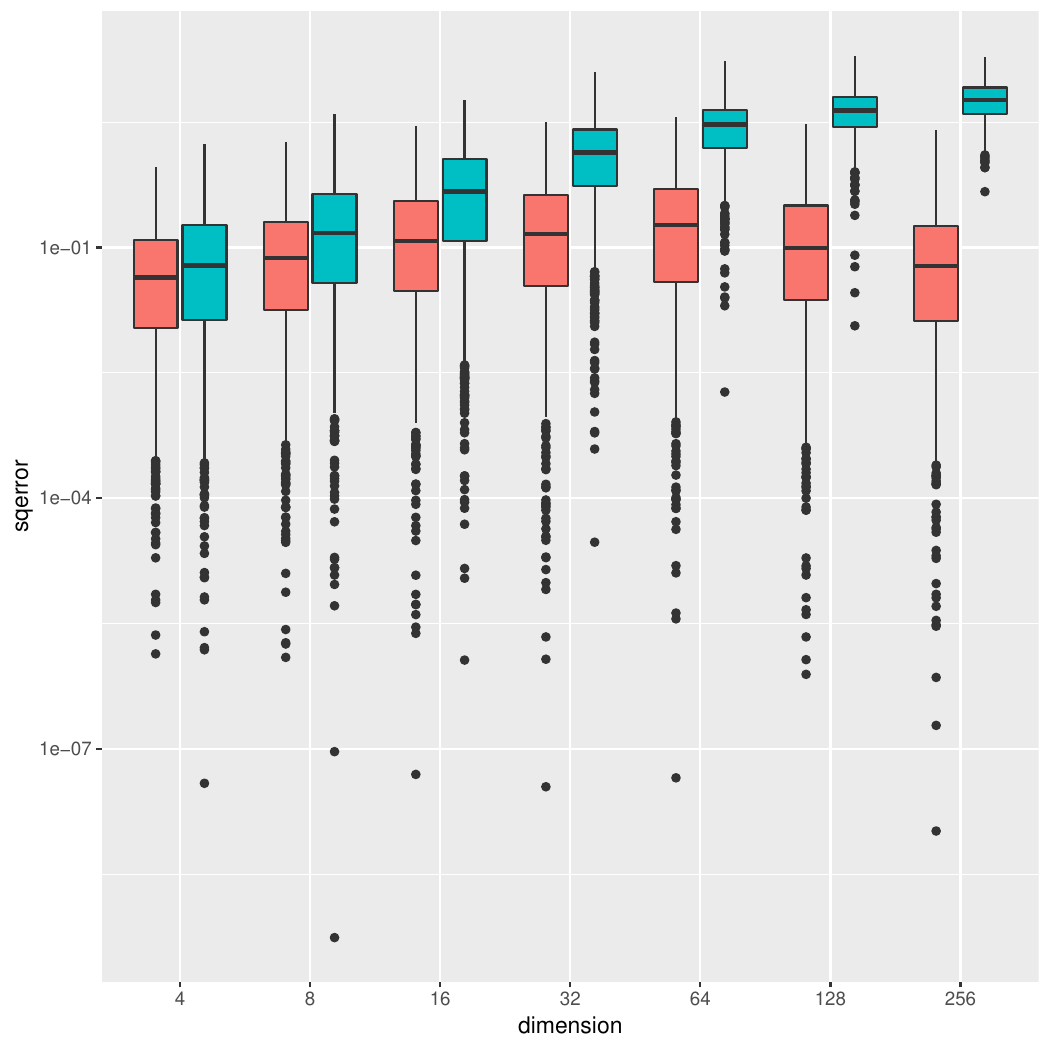}
			\caption{First coordinate}
		\end{subfigure}
		\begin{subfigure}{0.49 \textwidth}
			\includegraphics[width=\textwidth]{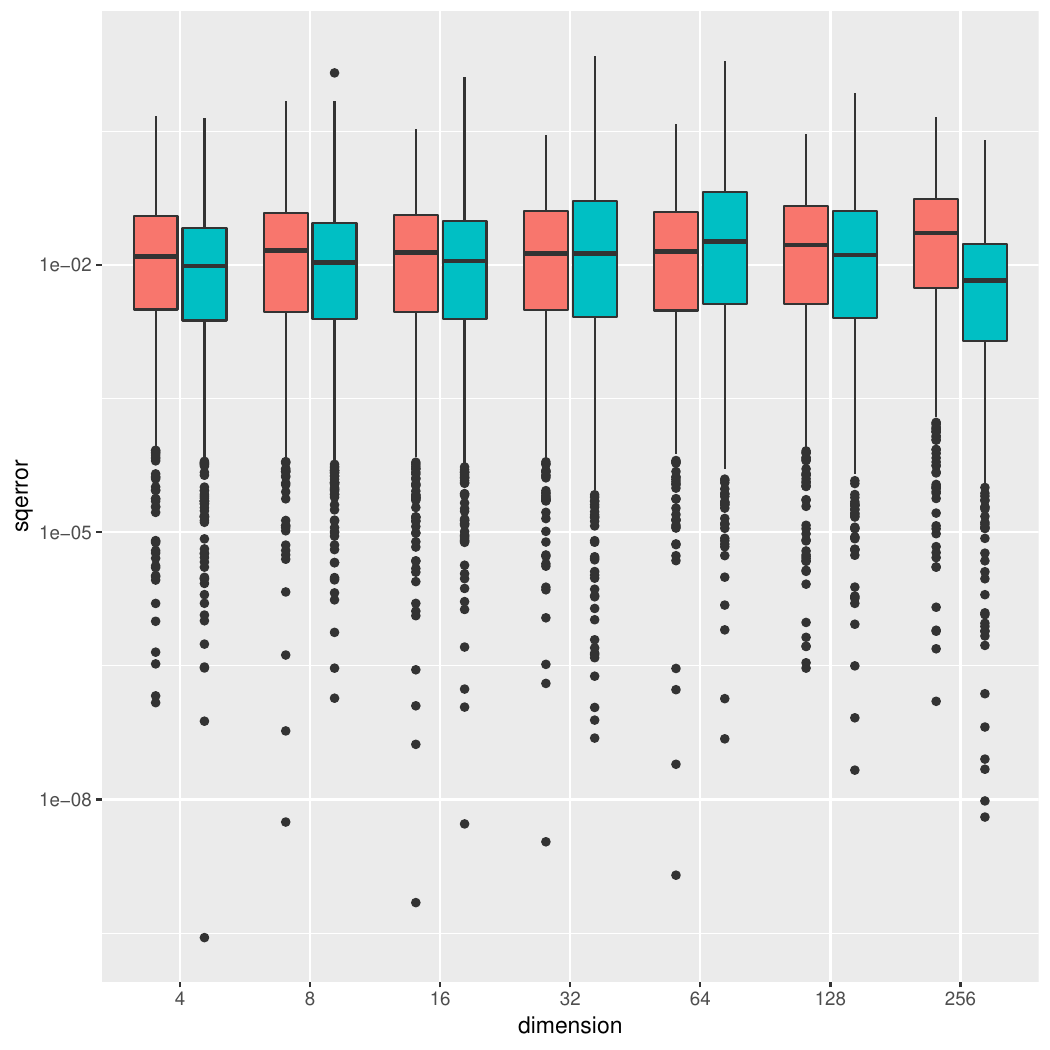}
			\caption{Log density}
		\end{subfigure}
		\caption{Standardized squared errors correlated Gaussian distribution, $\Var(\xi_i) = 1$, $\Cov(\xi_i,\xi_j) = \rho = 0.9$, $i \neq j$. ZZ is cyan, BPS is red.}
		\label{fig:correlatedgaussian}
	}
\end{figure}

\begin{figure}[ht!]
	{\centering  
		\begin{subfigure}{0.49 \textwidth}
			\includegraphics[width=\textwidth]{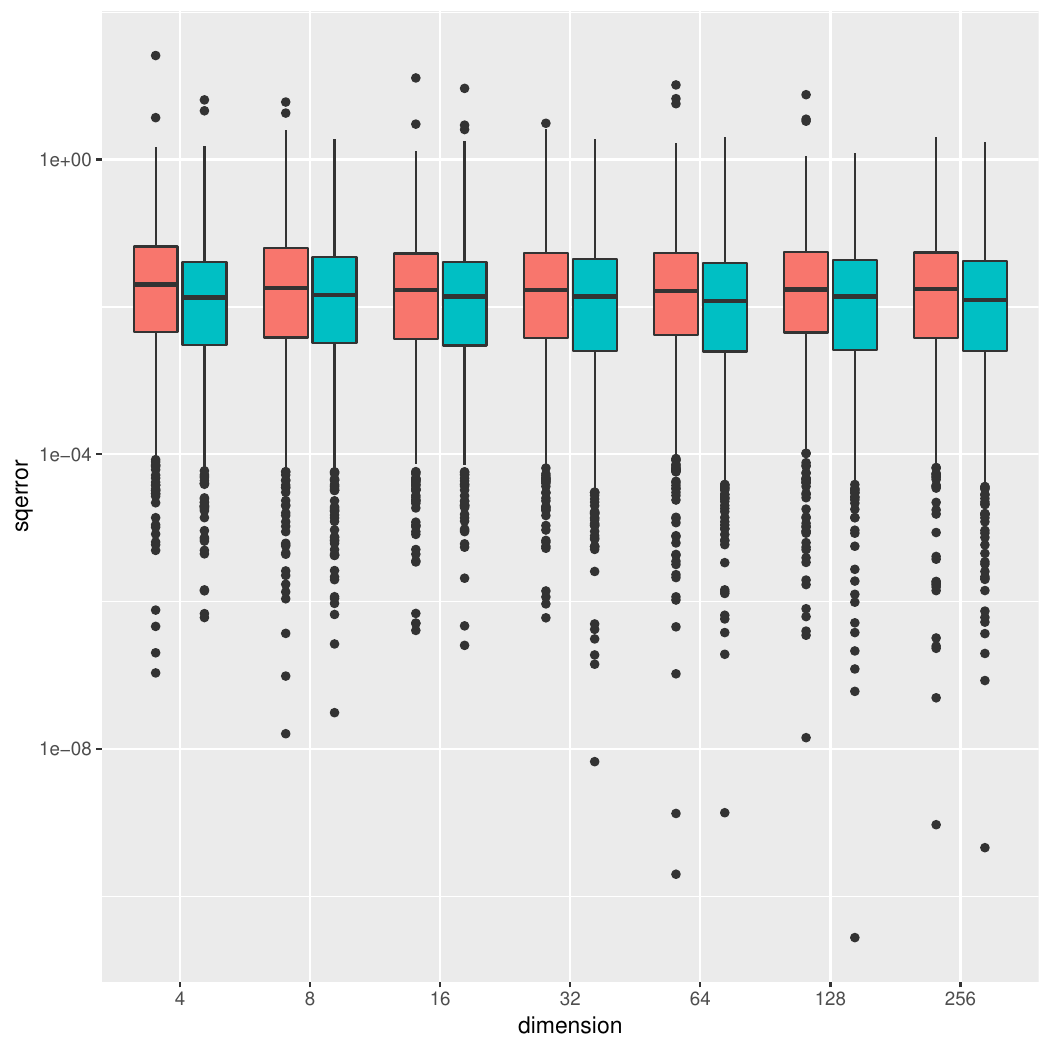}
			\caption{First coordinate}
		\end{subfigure}
		\begin{subfigure}{0.49 \textwidth}
			\includegraphics[width=\textwidth]{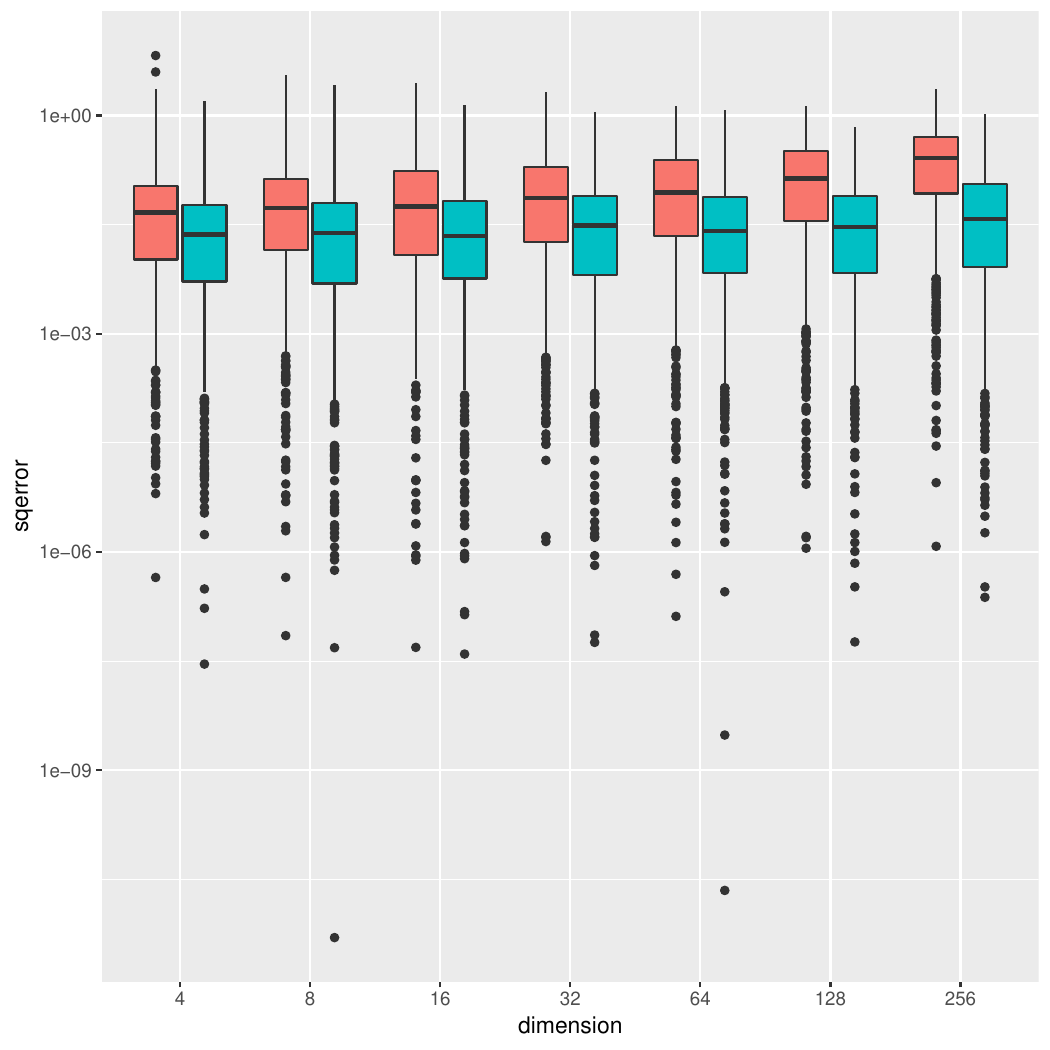}
			\caption{Log density}
		\end{subfigure}
		\caption{Standardized squared errors IID Student distribution with $\nu= 4$ degrees of freedom. ZZ is cyan, BPS is red.}
		\label{fig:iidstudent}
	}
\end{figure}

\begin{figure}[ht!]
	{\centering  
		\begin{subfigure}{0.49 \textwidth}
			\includegraphics[width=\textwidth]{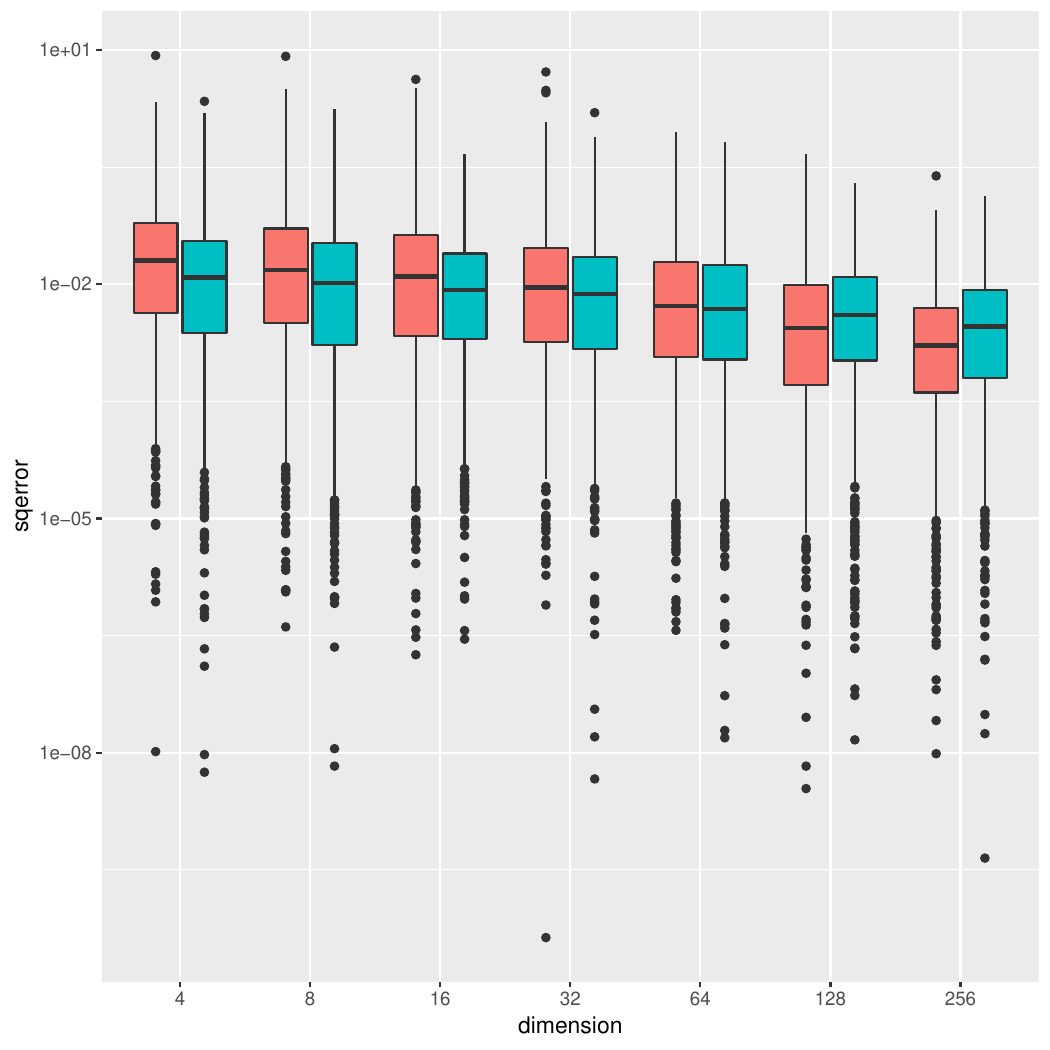}
			\caption{First coordinate}
		\end{subfigure}
		\begin{subfigure}{0.49 \textwidth}
			\includegraphics[width=\textwidth]{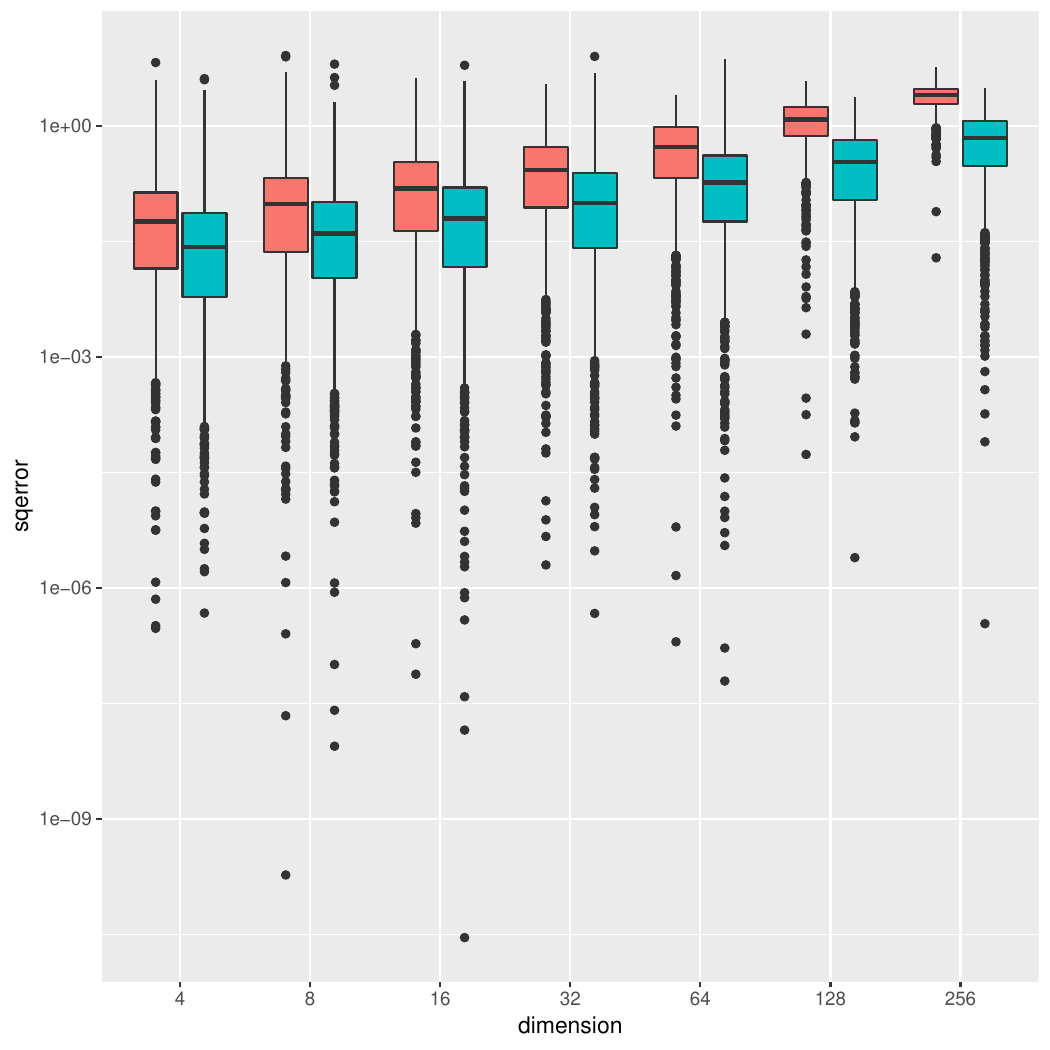}
			\caption{Log density}
		\end{subfigure}
		\caption{Standardized squared errors spherically symmetric Student distribution with $\nu= 4$ degrees of freedom. ZZ is cyan, BPS is red.}
		\label{fig:symmstudent}
	}
\end{figure}

\section{Discussion}\label{sec:discussion}
% 
% \joris{... where we discuss correlated case, no refreshment case?, ...}
In this paper we considered the high-dimensional asymptotic analysis of ZZ and BPS. The target probability distribution is assumed to be the standard normal distribution. This assumption is indeed restrictive, but the results can be extended to more general target distributions. For the ZZ sampler, it is straight forward to generalise it to a target distribution with a product form 
$\prod_{i=1}^d F(\dif \xi_i)$ where $F$ is a probability measure on $\mathbb{R}$. For BPS sampler, we proved convergence of the angular momentum for a general target distribution in Section \ref{sec:optimal}. %Although the scaling of the refreshment rate is different, in \cite{2018arXiv180804299D} there is a result for the $1$st component convergence for a general target distribution. 
The generalisation for other results for BPS sampler is a future research goal. 

Recently, the convergence rates of BPS and ZZ have also been studied by \cite{2018arXiv180808592A} and \cite{2018arXiv180804299D}. 
In the former article, they studied $L^2$-exponential convergence rates of Markov semigroups corresponding to the PDMPs under fairly general assumptions. 
The BPS convergence rate $O(d)$ considered here is in agreement with their results after noticing that they assumed $\sqrt{d}~\mathfrak{S}_{d-1}$ as the direction space. On the other hand, the convergence rate obtained by \cite{2018arXiv180804299D} is $O(d^{1/2})$, which is different from ours since this work studied a different scaling limit regime. In Theorem \ref{theo:bps_coordinate}, we obtained the Ornstein-Uhlenbeck process limit for the $1$st coordinate process. This scaling limit regime does not have optimal choice of $\rho$ since it can be accelerated by taking $\rho\downarrow 0$. They studied another scaling limit regime by taking $\rho=O(d^{-1/2})$, and proved that the $1$st coordinate process, together with the velocity, converges to a randomised Hamiltonian Monte Carlo process. In their regime, the negative log density process will be degenerate by Theorem \ref{theo:bps_ll}, and so we did not use diminishing refreshment rate.

\appendix

\section{The convergence of the Zig-Zag sampler}\label{asec:zz}

\subsection{Proof of Theorem \ref{theo:zigzag_limit}}\label{asec:zz_angle}

Let $S^\zigzag=(S^{\zigzag}_t)_{t\ge 0}$ be a Gaussian process with mean $0$ and covariance
$
K(s,t)=\mathbb{E}[\mathcal{T}_s\mathcal{T}_t]
$
where $\mathcal{T}$ is defined in (\ref{eq:limitzigzag}). 
Firstly, we prove that the Gaussian process $S^{\zigzag}$ is not a Markov process, although $\mathcal{T}$ is a Markov process. 

\begin{lem}\label{lem:non_markov}
The stationary Gaussian process $S^{\zigzag}$ is not a Markov process.	
\end{lem}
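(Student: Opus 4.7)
The plan is to invoke Doob's classical characterisation of stationary Gaussian Markov processes: a centred, mean-square continuous, stationary Gaussian process is Markovian if and only if its covariance has the exponential form $K(t,0) = K(0,0)\, e^{-\alpha |t|}$ for some $\alpha \ge 0$. Mean-square continuity of $S^{\zigzag}$ is implied by the local H\"older continuity asserted in Theorem \ref{theo:zigzag_limit}, and stationarity of $\mathcal T$ together with $\mathcal T_0 \sim \mathcal N(0,1)$ yields $K(0,0)=1$. It therefore suffices to show that $K(t,0)$ fails to be exponential on $[0,\infty)$.

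Suppose, for contradiction, that $K(t,0) = e^{-\alpha t}$ on $[0,\infty)$ for some $\alpha > 0$. I would then match one-sided derivatives at the origin with the values supplied by Proposition \ref{prop:covariance}. Equation (\ref{eq:derivative_k}) gives the first right derivative $-\alpha = -2\sqrt{2/\pi}$, so $\alpha = 2\sqrt{2/\pi}$ and consequently $\alpha^2 = 8/\pi$. However, the second right derivative of the exponential ansatz is $\alpha^2$, whereas the same equation (\ref{eq:derivative_k}) asserts $\partial_t^2 K(t,0)|_{t=0} = 1 \ne 8/\pi$. This contradiction rules out the exponential form, and hence $S^{\zigzag}$ cannot be Markov.

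The only delicate step is the appeal to Doob's theorem, which one can avoid by a self-contained argument: for a centred, stationary, stochastically continuous Gaussian process, the Markov property, combined with the Gaussian identity $\mathrm{Cov}(S_s,S_u)\,\mathrm{Var}(S_t) = \mathrm{Cov}(S_s,S_t)\,\mathrm{Cov}(S_t,S_u)$ for $s \le t \le u$, is equivalent to the multiplicative relation $K(a+b,0) = K(a,0)\,K(b,0)$ for $a,b \ge 0$; continuity then pins down the exponential form, and the derivative mismatch above still delivers the contradiction. I do not anticipate any serious obstacle beyond invoking this framework cleanly.
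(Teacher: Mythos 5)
Your proposal is correct and follows essentially the same route as the paper: invoke Doob's characterisation of stationary Gaussian Markov processes to force $K(t,0)=e^{-ct}$, then contradict this using the first and second derivatives at $t=0$ from Proposition \ref{prop:covariance} (since $c=2\sqrt{2/\pi}$ would force $c^2=8/\pi\neq 1$). The self-contained alternative you sketch is a nice bonus but not needed; the paper cites Theorem V.8.1 of Doob directly.
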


\begin{proof}
By Theorem V.8.1 of \cite{Doob} together with the continuity of $t\mapsto K(t,0)$, if $S^{\zigzag}$ is a Markov process, then 
	$$
	K(t,0)=e^{-ct}
	$$
	for some $c\in\mathbb{R}$. Therefore, the first and the second derivatives of $K(t,0)$ at $t=0$ are $-c$ and $c^2$ with respectively. However, this is impossible by derivatives calculated in Proposition \ref{prop:covariance}. Thus the process $S^{\zigzag}$ is not a Markov process. 
\end{proof}

Next we prove convergence of $S^{\zigzag,d}$. 
We denote the space of continuous and c\`adl\`ag functions on $[0,\infty)$ by $\mathbb C[0,\infty)$ and $\mathbb D[0,\infty)$, respectively. 
A sequence of $\mathbb{D}[0,\infty)$-valued processes $X^d=(X_t^d)_{t\ge 0}$ is called $\mathbb{C}$-tight if it is tight and any limit point is in $\mathbb{C}[0,\infty)$ with probability $1$. By Corollary VI.3.33 of \cite{JS}, if $X^d$ and $Y^d$ are $\mathbb{C}$-tight, then $(X_t^d+Y_t^d)_{t\ge 0}$ is $\mathbb{C}$-tight. On the other hand, the sum of tight sequence of processes is not tight in general. 

\begin{lem}\label{lem:sz_s}
The process $S^{\zigzag,d}$
converges to $S^{\zigzag}$. 
\end{lem}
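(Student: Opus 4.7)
The plan is to exploit a remarkable decoupling that the standard Gaussian target provides. Because $\lambda_i^{\zigzag,d}(\xi,v)=(\xi_iv_i)^+$ depends only on the pair $(\xi_i,v_i)$ and $\dot\xi_i=v_i$ is likewise local, each of the $d$ pairs $(\xi_i,v_i)$ evolves as an independent Markov process driven by its own Poisson measure $N^i$; in stationarity the initial distributions are also independent, so the $d$-dimensional Zig-Zag decomposes into $d$ independent one-dimensional copies. The first step is to introduce
\[
\tilde{\mathcal{T}}_{i,t}^d := d^{1/2}\,\xi_{i,d^{1/2}t}^{\zigzag,d}\,v_{i,d^{1/2}t}^{\zigzag,d},\qquad i=1,\dots,d,
\]
so that $S_t^{\zigzag,d}=d^{-1/2}\sum_{i=1}^d \tilde{\mathcal{T}}_{i,t}^d$, and to verify that each $\tilde{\mathcal{T}}_i^d$ is a stationary copy of $\mathcal{T}$ from \eqref{eq:limitzigzag}. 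Writing $u_i=d^{1/2}v_i\in\{-1,+1\}$, between jumps $\partial_t\tilde{\mathcal{T}}_i^d=d^{1/2}\cdot u_i^2 d^{-1/2}=1$, each sign flip of $u_i$ inverts $\tilde{\mathcal{T}}_i^d$, and the rescaled switching rate is $d^{1/2}\cdot d^{-1/2}(\tilde{\mathcal{T}}_i^d)^+=(\tilde{\mathcal{T}}_i^d)^+$; stationarity forces $\tilde{\mathcal{T}}_{i,0}^d\sim\mathcal{N}(0,1)$, which is the invariant law of $\mathcal{T}$. Thus $S^{\zigzag,d}$ is exactly the CLT-normalised empirical sum of $d$ i.i.d.\ stationary copies of $\mathcal{T}$.

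Once this representation is in place, finite-dimensional convergence to a centered Gaussian with covariance $K$ is immediate from the multivariate central limit theorem applied to the i.i.d.\ vectors $(\tilde{\mathcal{T}}_{i,t_1}^d,\dots,\tilde{\mathcal{T}}_{i,t_k}^d)$, which identifies the finite-dimensional laws of the limit with those of $S^{\zigzag}$. For $\mathbb{C}$-tightness in $\mathbb{D}[0,\infty)$ we rely on two ingredients. The i.i.d.\ decomposition and stationarity of $\mathcal{T}$ give
\[
\mathbb{E}\bigl[(S_t^{\zigzag,d}-S_s^{\zigzag,d})^2\bigr] = 2\bigl(1-K(|t-s|,0)\bigr) = O(|t-s|)
\]
by the first-derivative formula in Proposition~\ref{prop:covariance}; combined with an Aldous-type argument (using the joint filtration of $(\tilde{\mathcal{T}}_1^d,\dots,\tilde{\mathcal{T}}_d^d)$ and the strong Markov property of each $\tilde{\mathcal{T}}_i^d$), this yields tightness. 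Second, jumps of $S^{\zigzag,d}$ have size $|\Delta S_t^{\zigzag,d}|=2d^{-1/2}|\xi_i|$ at a switch involving coordinate $i$; since velocities of magnitude $d^{-1/2}$ keep each $\xi_i$ within $T$ of $\xi_{i,0}$ up to rescaled horizon $T$, the supremum over $[0,T]$ and over $i$ is $O_{\mathbb{P}}(d^{-1/2}\sqrt{\log d})\to 0$, so any weak limit lies in $\mathbb{C}[0,\infty)$ and must coincide with $S^{\zigzag}$.

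The main obstacle is converting the pointwise $L^2$ increment bound into genuine Skorokhod tightness: the natural filtration is the joint one for $(\tilde{\mathcal{T}}_1^d,\dots,\tilde{\mathcal{T}}_d^d)$ rather than for a single component, so Aldous' criterion is not applied componentwise but globally. A clean alternative, more in keeping with the semimartingale-characteristics strategy the paper adopts, is to invoke \cite[Theorem~VIII.3.22]{JS} directly: write $S^{\zigzag,d}$ as a semimartingale with respect to its own filtration, show that its predictable drift vanishes and that the modified second characteristic (truncating at jumps of size $d^{-1/2}$) converges in probability to the deterministic kernel $K(t,t)$, and verify that the Levy-type jump intensity vanishes uniformly on compacts. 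Combined with the finite-dimensional identification above, this pins down the Gaussian limit $S^{\zigzag}$.
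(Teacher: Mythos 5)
Your i.i.d.\ decomposition $S_t^{\zigzag,d}=d^{-1/2}\sum_{i=1}^d\tilde{\mathcal T}_{i,t}^d$ and the identification of each summand with a stationary copy of $\mathcal T$ is exactly the paper's key step (the paper verifies it by It\^o's formula and a rescaling of the Poisson measures, but your heuristic is the same computation), and the finite-dimensional convergence via the multivariate CLT is also as in the paper. The gap is in tightness. Your primary route rests on $\mathbb{E}[(S_t^{\zigzag,d}-S_s^{\zigzag,d})^2]=O(|t-s|)$, which you yourself flag as insufficient; indeed a second-moment increment bound of order $|t-s|$ (the Brownian order) cannot by itself give tightness in $\mathbb{D}$, and an Aldous argument with respect to the joint filtration is not carried out. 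The paper closes this by invoking Theorem 2 of \cite{Hahn1978}, a CLT in $\mathbb{D}[0,1]$ for normalised sums of i.i.d.\ c\`adl\`ag processes, whose hypotheses are the second-moment bound \emph{together with} the bound $\mathbb{E}[(\mathcal T_u-\mathcal T_t)^2(\mathcal T_t-\mathcal T_s)^2]\le C|u-s|^2$ on products of adjacent increments; the latter is proved by counting jumps of $\mathcal T$ via the Poisson measure (at most one jump in $(s,u]$ forces one of the two increments to be deterministic). This fourth-moment-type estimate is the missing ingredient in your argument.

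Your proposed fallback via \cite[Theorem~VIII.3.22]{JS} does not work, for a structural reason. That theorem (and its relatives in Chapter IX) requires the limit to be a semimartingale characterised by its characteristics --- a process with independent increments or the unique solution of a Markovian martingale problem. Here the limit $S^{\zigzag}$ is a \emph{non-Markovian} stationary Gaussian process (Lemma \ref{lem:non_markov}) without independent increments, so it is not covered. Moreover the premise that the predictable drift of $S^{\zigzag,d}$ vanishes is false: the first characteristic is
\begin{equation*}
B_t^d \;=\; -2\int_0^t d^{-1/2}\sum_{i=1}^d\Bigl(\bigl((\tilde{\mathcal T}_{i,s}^d)^+\bigr)^2-\tfrac12\Bigr)\,\dif s,
\end{equation*}
a time integral of a CLT-normalised sum of centred i.i.d.\ variables, which converges to a nondegenerate Gaussian field rather than to $0$. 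This non-vanishing random drift is precisely what makes the limit non-Markovian and forces the paper to abandon the characteristics approach for this lemma in favour of Hahn's functional CLT.
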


\begin{proof}
Observe that $S_t^{\zigzag,d}=d^{-1/2}\sum_{i=1}^d \mathcal{T}_{i,t}^d$ where 
\begin{equation}\label{eq:iid_decomposition}
	\mathcal{T}_{i,t}^d=\xi_{i,t}^{\zigzag,d}~v_{i,t}^{\zigzag,d}. 
\end{equation}
By construction, 
$(\mathcal{T}_{i,t}^d)_{t\ge 0}\ (i=1,\ldots, d)$ are independent processes and have the same law as that of $\mathcal{T}$. 
%\kengo{Thank you, Joris for pointing out a big mistake about BDG inequality. It should be bounded by $[M]$ not by $\langle M\rangle$! So we have to treat the jump size $\Delta \mathcal{T}_t$. I modified the proof and I think this version is simpler. }
By using the fact, we prove tightness of the sequence of processes $(S_t^{\zigzag,d})_{t\in (0,T]}$ for each $T>0$. 
By (\ref{eq:square_t}), we have
$$
\sup_{0\le t,u\le T}|\mathcal{T}_u-\mathcal{T}_t|\le 2\sup_{0\le t\le T}|\mathcal{T}_t|\le 2|\mathcal{T}_0|+2T. 
$$
Observe that any moments of the right-hand side of the above inequality exist since $\mathcal{T}_0\sim \mathcal{N}(0,1)$. 
By using this bound, for the Poisson random measure $N(\dif t,\dif z)$, we have 
\begin{align*}
	\sum_{t<v\le u}1_{\{\Delta \mathcal{T}_v\neq 0\}}
	=\int_{(t,u]\times\mathbb{R}_+}1_{\{z\le\mathcal{T}_{s-}\}}N(\dif s,\dif z)\le N(A)
\end{align*}
where $A=(t,u]\times (0, |\mathcal{T}_0|+T]$ since $|\mathcal{T}_{t}|\le |\mathcal{T}_0|+t$ by \eqref{eq:square_t}. 
Let $\lambda=|u-t|(|\mathcal{T}_0|+T)$. Then we have
$$
\mathbb{P}(N(A)\ge 1)=1-e^{-\lambda}\le \lambda,\ 
\mathbb{P}(N(A)\ge 2)=1-e^{-\lambda}-\lambda e^{-\lambda}\le \frac{\lambda^2}{2}. 
$$
Since if there is no jump, $\mathcal{T}_t$ has the deterministic move, and we have
$$
\sum_{t< v\le u}1_{\{\Delta \mathcal{T}_v\neq 0\}}=0~\Longrightarrow~\mathcal{T}_u-\mathcal{T}_s=u-s. 
$$
Hence if $t\le u\le T$, we have
\begin{align*}
\mathbb{E}\left[(\mathcal{T}_u-\mathcal{T}_t)^2\right]&=
\mathbb{E}\left[(\mathcal{T}_u-\mathcal{T}_t)^2,\sum_{t< v\le u}1_{\{\Delta \mathcal{T}_v\neq 0\}}=0\right]\\
&\quad+
\mathbb{E}\left[(\mathcal{T}_u-\mathcal{T}_t)^2,\sum_{t< v\le u}1_{\{\Delta \mathcal{T}_v\neq 0\}}\ge 1\right]\\
&\le |u-t|^2
+\mathbb{E}[(2|\mathcal{T}_0|+2T)^2\times\lambda]\\
&\le C|u-t|
\end{align*}
for some $C>0$. On the other hand, if $s\le t\le u\le T$
\begin{align*}
\sum_{s< v\le u}1_{\{\Delta \mathcal{T}_v\neq 0\}}=1~
\Longrightarrow~
\sum_{s< v\le t}1_{\{\Delta \mathcal{T}_v\neq 0\}}=0\ \mathrm{or}\ 
\sum_{t< v\le u}1_{\{\Delta \mathcal{T}_v\neq 0\}}=0
\end{align*}
and hence 
\begin{align*}
\sum_{s< v\le u}1_{\{\Delta \mathcal{T}_v\neq 0\}}=1~
\Longrightarrow~
(\mathcal{T}_u-\mathcal{T}_t)^2(\mathcal{T}_t-\mathcal{T}_s)^2\le |u-s|^2(2|\mathcal{T}_0|+2T)^2. 
\end{align*}
Therefore, 
\begin{align*}
\mathbb{E}\left[(\mathcal{T}_u-\mathcal{T}_t)^2(\mathcal{T}_t-\mathcal{T}_s)^2\right]
&=
\mathbb{E}\left[(\mathcal{T}_u-\mathcal{T}_t)^2(\mathcal{T}_t-\mathcal{T}_s)^2,\sum_{t< v\le u}1_{\{\Delta \mathcal{T}_v\neq 0\}}= 0\right]\\
&\quad +
\mathbb{E}\left[(\mathcal{T}_u-\mathcal{T}_t)^2(\mathcal{T}_t-\mathcal{T}_s)^2,\sum_{t< v\le u}1_{\{\Delta \mathcal{T}_v\neq 0\}}= 1\right]\\
&\quad +
\mathbb{E}\left[(\mathcal{T}_u-\mathcal{T}_t)^2(\mathcal{T}_t-\mathcal{T}_s)^2,\sum_{t< v\le u}1_{\{\Delta \mathcal{T}_v\neq 0\}}\ge 2\right]\\
& \le 
|u-s|^4+|u-s|^2\mathbb{E}[(2|\mathcal{T}_0|+2T)^2]\\
&\quad +
\mathbb{E}[(2|\mathcal{T}_0|+2T)^4\times\frac{\lambda^2}{2}]\\
& \le C|u-s|^2
\end{align*}
for some $C>0$. 
These inequalities imply the conditions (i, ii) in Theorem 2 of \cite{Hahn1978}. Therefore, by Theorem 2 of \cite{Hahn1978}, we have central limit theorems for the sum of the copies of $(\mathcal{T}_t)_{t\in (0,T]}$. In particular, $(S_t^{\zigzag,d})_{t\in (0,T]}$ is tight. 

On the other hand, for any $0\le t_1<t_2<\cdots <t_k$, 
any $k$-dimensional random variable 
$$
(S_{t_1}^{\zigzag,d},\ldots, S_{t_k}^{\zigzag,d})=d^{-1/2}\sum_{i=1}^d(\mathcal{T}_{i,t_1}^d,\ldots, \mathcal{T}_{i,t_k}^d)
$$
converges to a normal distribution by the central limit theorem since the random variables 
$(\mathcal{T}_{i,t_1}^d,\ldots, \mathcal{T}_{i,t_k}^d)$ $(i=1,\ldots, d)$ are independent and have the same law as that of 
$(\mathcal{T}_{t_1},\ldots, \mathcal{T}_{t_k})$. 
Hence $(S_t^{\zigzag,d})_{t\in (0,T]}$ converges to $(S_t^{\zigzag})_{t\in (0,T]}$ 
by Lemma VI.3.19 of \cite{JS}. Then the convergence of $(S^{\zigzag,d}_t)_{t\ge 0}$ to 
$(S^{\zigzag}_t)_{t\ge 0}$
also follows from 
Theorem 16.7 of \cite{MR1700749}. 
\end{proof}

We call a $\mathbb{D}[0,\infty)$-valued processes $X=(X_t)_{t\ge 0}$ \emph{locally $\alpha$-H\"older continuous} if there is a $\mathbb{C}[0,\infty)$-valued process $\tilde{X}$ with the same law as that of $X$ such that there exists $\delta>0, h(\omega)>0$ and 
\begin{align*}
	\mathbb{P}\left(\omega\in\Omega:\sup_{|u-v|\le h(\omega), 0\le u,v\le T}\frac{|\tilde{X}_u(\omega)-\tilde{X}_v(\omega)|}{|u-v|^\alpha}\le\delta \right)=1
\end{align*}
for any $T>0$. 

\begin{lem}\label{lem:holder}
	$S^{\zigzag}$ is locally $\alpha$-H\"older continuous for $\alpha\in (0,1/2)$ but it is not locally $\alpha$-H\"older continuous for any $\alpha\ge 1/2$. 
\end{lem}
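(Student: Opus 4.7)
The plan is to deduce both directions from the local behaviour of the covariance kernel near the diagonal. By stationarity $K(u,v)=K(|u-v|,0)$, and Proposition \ref{prop:covariance} gives $\partial_t K(t,0)|_{t=0}=-2\sqrt{2/\pi}$ together with $\partial_t^2 K(t,0)|_{t=0}=1$. A Taylor expansion therefore yields
\[
\mathbb{E}\bigl[(\mathcal{S}^{\zigzag}_u - \mathcal{S}^{\zigzag}_v)^2\bigr]
= 2\bigl(1 - K(|u-v|,0)\bigr)
= 4\sqrt{2/\pi}\,|u-v| + O(|u-v|^2),
\]
so increments have a \emph{linear} (Brownian-like) variance scaling near $0$.

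For the positive direction, I would exploit the Gaussianity of each increment: every even moment is a power of the variance, so $\mathbb{E}|\mathcal{S}^{\zigzag}_u-\mathcal{S}^{\zigzag}_v|^{2n}\leq C_n|u-v|^n$ for each $n\in\mathbb{N}$ and each bounded time window. The Kolmogorov--Chentsov continuity theorem then furnishes a continuous modification that is locally $\alpha$-H\"older for every $\alpha<(n-1)/(2n)$; sending $n\to\infty$ covers every $\alpha\in(0,1/2)$.

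For the negative direction with $\alpha>1/2$, I would use the dyadic quadratic variation $V_n:=\sum_{k=0}^{2^n-1}(\mathcal{S}^{\zigzag}_{(k+1)/2^n}-\mathcal{S}^{\zigzag}_{k/2^n})^2$ on $[0,1]$. The expansion above gives $\mathbb{E}[V_n]\to 4\sqrt{2/\pi}>0$, while $\operatorname{Var}(V_n)\to 0$ follows from the Gaussian identity $\operatorname{Cov}(Z^2,W^2)=2\operatorname{Cov}(Z,W)^2$ combined with smoothness of $K$ off the diagonal to control the cross-covariances. Hence $V_n$ converges in probability to a positive constant, and a modification that was $\alpha$-H\"older with $\alpha>1/2$ would force $V_n\leq \delta^2\cdot 2^{n(1-2\alpha)}\to 0$, a contradiction.

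The main obstacle is the borderline case $\alpha=1/2$, which quadratic variation alone cannot rule out (it merely forces the H\"older constant to be at least $2(2/\pi)^{1/4}$). Here I would prove a L\'evy-type lower modulus: writing $\Delta_{k,n}$ for the dyadic increments above, the vector $(\Delta_{k,n})_k$ is centered Gaussian with near-constant variance $\sim 4\sqrt{2/\pi}\cdot 2^{-n}$ and off-diagonal covariances of order $O(2^{-2n})$ (again by smoothness of $K$ away from $0$). Sudakov minoration, or a direct \v{S}id\'ak--Slepian comparison with an independent Gaussian vector of the same variances, yields $\mathbb{E}\max_k|\Delta_{k,n}|\gtrsim\sqrt{2^{-n}\log 2^n}$, and Borell--TIS concentration propagates this to an almost-sure lower bound along a subsequence. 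A $1/2$-H\"older modification would bound $\max_k|\Delta_{k,n}|$ by $\delta\sqrt{2^{-n}}$ for $n$ large, contradicting the extra $\sqrt{\log 2^n}$ factor.
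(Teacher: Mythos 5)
Your proposal is correct, and the positive direction ($\alpha<1/2$) is exactly the paper's argument: the linear small-time behaviour of $\sigma(t)^2=2-2K(t,0)$ from Proposition \ref{prop:covariance}, Gaussian moment bounds $\mathbb{E}|S^{\zigzag}_u-S^{\zigzag}_v|^{2n}\le C|u-v|^n$, and Kolmogorov--\v{C}entsov. For the negative direction you take a genuinely different route. The paper handles all $\alpha\ge 1/2$ in one stroke: $\partial_t^2K(t,0)|_{t=0}=1>0$ makes $\sigma(t)^2$ concave near $0$, so the Slepian-based exact modulus of continuity of Marcus--Rosen (Theorem 7.2.10) gives $\limsup\,|S^\zigzag_u-S^\zigzag_v|/\sqrt{2\sigma^2(u-v)\log(1/|u-v|)}\ge 1$ a.s., which a $1/2$-H\"older modification would violate since $\sigma^2(h)\gtrsim h$. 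You instead split the cases: quadratic variation for $\alpha>1/2$, and a hands-on L\'evy lower modulus (Sudakov minoration plus Borell--TIS along dyadics) for $\alpha=1/2$. Both of your steps are sound, and the second is arguably more self-contained than invoking Marcus--Rosen; note, though, that once $\alpha=1/2$ is excluded, every $\alpha>1/2$ is excluded automatically (on a bounded window $|u-v|^\alpha\le|u-v|^{1/2}$), so your quadratic-variation step is redundant. Two small points you should make explicit if you write this up: (i) both your variance control of $V_n$ and your $O(2^{-2n})$ bound on off-diagonal increment covariances need $K''(\cdot,0)$ bounded on $[0,T]$, which is not literally stated in Proposition \ref{prop:covariance} but follows from the formula $K''(t)=\mathbb{E}[(-4\mathcal{T}_t^++2(\mathcal{T}_t^+)^3)\mathcal{T}_0]$ obtained in its proof together with $\mathcal{T}_t\sim\mathcal N(0,1)$; (ii) the ``\v{S}id\'ak--Slepian comparison with an independent vector'' alternative is stated loosely --- the classical inequalities go in the wrong direction for a lower bound on the maximum when correlations are positive (and the nearest-neighbour covariance here is $+h^2+o(h^2)$) --- so you should rely on the Sudakov route, which only needs the separation $\|\Delta_j-\Delta_k\|_2\gtrsim 2^{-n/2}$ and does go through.
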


\begin{proof}
The mean zero Gaussian process $S^{\zigzag}$ satisfies 
$S_t^{\zigzag}\sim \mathcal N(0,1)$
and 
$S_t^{\zigzag}-S_0^{\zigzag}\sim\mathcal N(0,\sigma(t)^2)$ where
$$
\sigma(t)^2
:=\mathbb{E}[(S_t^{\zigzag}-S_0^{\zigzag})^2]=
\mathbb{E}[(S_t^{\zigzag})^2]+\mathbb{E}[(S_0^{\zigzag})^2]-2
\mathbb{E}[S_t^{\zigzag}S_0^{\zigzag}]=
2-2K(t,0). 
$$
Observe that $\sigma^2(0)=0$. By Proposition \ref{prop:covariance} we have 
\begin{align*}
	\frac{\sigma(t)^2}{t}&=\frac{\sigma(t)^2-\sigma^2(0)}{t}\\
	&=-2\frac{K(t,0)-K(0,0)}{t}\longrightarrow_{t\rightarrow 0}-2\partial_tK(t,0)|_{t=0}=8\phi(0), 
\end{align*}
and in particular, for sufficiently small $h>0$, 
we have a local bound
$ct\le |\sigma(t)^2|\le Ct\ (0\le t\le h)$ for some $c, C>0$. 
On the other hand, since we have $\sigma(t)^2\le 2 \E[(S_t^\zigzag)^2 + (S_0^\zigzag)^2] = 4$, 
%\joris{; not $\sigma(t)^2 \leq 2$; $K(t)$ can be negative}
 there is a global bound
$|\sigma(t)^2|\le C t\ (t\ge 0)$ for some  constant $C>0$.  
Therefore, the $(2n)$-th moment of  $S_t^{\zigzag}-S_0^{\zigzag}\sim\mathcal N(0,\sigma(t)^2)$ is 
$$
\mathbb{E}\left[|S_t^{\zigzag}-S_0^{\zigzag}|^{2n}\right]=(2n-1)!!~|\sigma(t)^2|^n\le C|t|^n
$$
for some $C>0$ for any $n\in\mathbb{N}$. 
Thus, local $\alpha$-H\"older continuity for any $\alpha\in (0,1/2)$ follows from Kolmogorov-\v{C}entsov's theorem (Theorem 2.2.8 of \cite{Karatzas1991}). 

On the other hand, by Proposition \ref{prop:covariance}, the second derivative of $K(t,0)$ around $t=0$ is positive and hence $\sigma(t)^2$ is concave around $t=0$. Therefore, by Slepian's lemma (Theorem 7.2.10 of \cite{MR2250510}), we have
\begin{equation}\label{eq:slepian}
\lim_{t\rightarrow 0}\sup_{|u-v|\le t, 0\le u,v\le 1}\frac{|S_u^{\zigzag}(\omega)-S_v^{\zigzag}(\omega)|}{\sqrt{2\sigma^2(u-v)\log(1/|u-v|)}}\ge 1
\end{equation}
almost surely. 
If $S^{\zigzag}$ is locally $1/2$-H\"older continuous, then there exists a process 
$\tilde{S}^{\zigzag}$, with the same law as $S^{\zigzag}$, such that 
for $t\ge 0$, and for some $\delta>0$, 
$$
\frac{|\tilde{S}_{t+h}^{\zigzag}(\omega)-\tilde{S}_t^{\zigzag}(\omega)|}{\sqrt{2\sigma^2(h)\log(1/|h|)}}
\le \delta \frac{|h|^{1/2}}{\sqrt{2c|h|\log(1/|h|)}}
$$
for sufficiently small $h$. 
The right hand side converges to $0$ which contradicts \eqref{eq:slepian}. Thus 
$\tilde{S}^{\zigzag}$
and  
$S^{\zigzag}$
 can not be locally $1/2$-H\"older continuous with probability $1$. 
\end{proof}

\begin{proof}[Proof of Theorem \ref{theo:zigzag_limit}]
The claim follows by Lemmas \ref{lem:non_markov}-\ref{lem:holder}. 
\end{proof}

\subsection{Proof of Corollary \ref{cor:zigzag_switches}}

\begin{proof}[Proof of Corollary \ref{cor:zigzag_switches}]
The convergence of the switching rate comes from the law of large numbers. 
Observe that 
$$
d^{-1}\sum_{0\le t\le T}1_{\{\Delta S_t^{\zigzag,d}\neq 0\}}=
d^{-1}\sum_{i=1}^d\sum_{0\le t\le T}1_{\{\Delta \mathcal{T}_{i,t}^d\neq 0\}}
$$
where $(\mathcal{T}_{i,t}^d)_{t\ge 0}\ (i=1,\ldots, d)$ are independent copies of (\ref{eq:limitzigzag}). Therefore, by the law of large numbers, we have
\begin{align*}
	d^{-1}\sum_{0\le t\le T}1_{\{\Delta S_t^d\neq 0\}}
\longrightarrow_{d\rightarrow\infty}\mathbb{E}\left[\sum_{0\le t\le T}1_{\{\Delta \mathcal{T}_t\neq 0\}}\right]&=\mathbb{E}\left[\int_{(0,T]\times\mathbb{R}_+} 1_{\{z\le \mathcal{T}_t\}}\dif z\dif t\right]\\
&=\int_0^T\mathbb{E}[\mathcal{T}_t^+]\dif t=\frac{T}{\sqrt{2\pi}}
\end{align*}
by $\mathcal{T}_t\sim \mathcal N(0,1)$. %where we used the fact that $\mathcal{T}$ is ergodic by Theorem \ref{theo:ergodicity_t}. 
\end{proof}

\subsection{Proof of Theorem \ref{theo:zigzag_integral_limit}}\label{asec:zz_coordinate}

We call a $\mathbb{D}[0,\infty)$-valued processes $X$ differentiable with respect to the time index $t$ if there is a $\mathbb{C}[0,\infty)$-valued process $\tilde{X}$ with the same law as that of $X$ and 
another  $\mathbb{C}[0,\infty)$-valued process $(\partial \tilde{X}_t(\omega))_{t\ge 0}$
on the same probability space as that of $\tilde{X}$ such that
\begin{align*}
	\mathbb{P}\left(\omega\in\Omega:\lim_{h\rightarrow 0}\frac{\tilde{X}_{t+h}(\omega)-\tilde{X}_t(\omega)}{h}=\partial \tilde{X}_t(\omega),\ \forall t\in (0,T) \right)=1
\end{align*}
for any $T>0$. 

\begin{proof}[Proof of  Theorem \ref{theo:zigzag_integral_limit}]
The map 
$(\alpha_t)_{t\ge 0}\mapsto (\int_0^t \alpha_{s}\dif s)_{t\ge 0}$
from $\mathbb{D}[0,\infty)$ to $\mathbb{C}[0,\infty)$ is continuous.
% \joris{Do you mean the map $(\alpha_t)_{t\ge 0}\mapsto \int_0^t \alpha_s \, ds$, into $\mathbb R$, or $(\alpha_t)_{t\ge 0}\mapsto (\int_0^t \alpha_{s}\dif s)_{t\ge 0}$, into $\mathbb{C}[0,\infty)$/$\mathbb{D}[0,\infty)$? Based on your proof, I would guess that the first one is sufficient.} 
Also, by Theorem~\ref{theo:zigzag_limit}, the sequence $S^{\zigzag,d}$ converges in law to $S^{\zigzag}$. 
% \joris{the statement of the theorem is convergence in $\mathbb D([0,\infty))$, so I guess that is where $\alpha$ lives in the statement above.}\kengo{Thanks.}
Therefore, the sequence of processes $(Y_t^{\zigzag,d}-Y_0^{\zigzag,d})_{t\ge 0}\ (d\in\mathbb{N})$ is $\mathbb{C}$-tight since 
\begin{equation}
\label{eq:representation_zz_coordinate}
\begin{split}
	Y_t^{\zigzag,d}-Y_0^{\zigzag,d}&=d^{-1/2}(\|\xi_{t}^{\zigzag,d}\|^2-\|\xi_{0}^{\zigzag,d}\|^2)\\
%&=2d^{-1/2}\int_0^{d^{1/2}t}S_{d^{-1/2}u}^{\zigzag,d}\dif u\\
	&=2\int_0^{t}S_{u}^{\zigzag,d}\dif u~\longrightarrow_{d\rightarrow\infty}~2\int_0^{t}S_{u}^{\zigzag}\dif u
\end{split}
\end{equation}
in distribution in Skorohod topology.
Also, $\xi_0^{\zigzag,d}\sim \mathcal{N}_d(0, I_d)$ and we have
$$
Y_0^{\zigzag,d}=\sqrt{d}\left(\frac{\|\xi_0^{\zigzag,d}\|^2}{d}-1\right)
\Longrightarrow_{d\rightarrow\infty} \mathcal N(0,2).
$$
Thus $(Y_t^{\zigzag,d})_{t\ge 0}=((Y_t^{\zigzag,d}-Y_0^{\zigzag,d})+Y_0^{\zigzag,d})_{t\ge 0}$ is $\mathbb{C}$-tight. On the other hand, by the finite dimensional central limit theorem, 
$(Y_{t_1}^{\zigzag,d},Y_{t_2}^{\zigzag,d},\ldots, Y_{t_k}^{\zigzag,d})$ converges in distribution to some normal distribution for any $k\in\mathbb{N}$ and any $t_1<\ldots<t_k$, since 
\begin{align*}
	(Y_{t_1}^{\zigzag,d},Y_{t_2}^{\zigzag,d},\ldots, Y_{t_k}^{\zigzag,d})
&=\sqrt{d}\left(\frac{\|\xi_{t_1}^{\zigzag,d}\|^2}{d}-1,\ldots, \frac{\|\xi_{t_k}^{\zigzag,d}\|^2}{d}-1\right)\\
&=\sqrt{d}^{-1}\sum_{i=1}^d\left(\|\xi_{i,t_1}^{\zigzag,d}\|^2-1,\ldots, \|\xi_{i,t_k}^{\zigzag,d}\|^2-1\right)\\
&=:\sqrt{d}^{-1}\sum_{i=1}^d U_i^d
\end{align*}
and $U_i^d\ (i=1,\ldots, d,\ d\in\mathbb{N})$ are mean $0$ and independent and identically distributed since every component of $\xi^{Z,d}$ is an independent Zig-Zag process due to the decoupling of the switching rate.
% which will be explained in detail in Section \ref{sec:proof_zigzag_first_component}. 
% \joris{Although each $Y_{t_i}^d$ is normally distributed, perhaps the joint distribution is not multivariate normal? See \url{https://stats.stackexchange.com/questions/30159/is-it-possible-to-have-a-pair-of-gaussian-random-variables-for-which-the-joint-d}}\kengo{Thanks. I added the detail. } \joris{You are very quick! I think my remark was stupid, since you are using the CLT which is what is giving the joint normality.}
Thus by Lemma VI.3.19 of \cite{JS}, $Y^{\zigzag,d}$ converges to a Gaussian process, which will be denoted by	 $Y^{\zigzag}$ with a covariance function denoted by $L(s,t)$. 
%Since the marginal distribution of $Y^{\zigzag,d}$ is $\mathcal{N}(0,2)$, that of $Y^\zigzag$ is also $\mathcal{N}(0,2)$. 
Since the covariance function of $Y^{\zigzag,d}$ and $Y^\zigzag$ are the same, 
and $\dif Y_t^{\zigzag,d}=2S_t^{\zigzag,d}\dif t$, we have
\begin{align*}
	L(s,t)&=\mathbb{E}[Y_s^{\zigzag,d}Y_t^{\zigzag,d}]\\
	&=\frac{1}{2}\left(\mathbb{E}[(Y_s^{\zigzag,d})^2]+\mathbb{E}[(Y_t^{\zigzag,d})^2]-\mathbb{E}[(Y_s^{\zigzag,d}-Y_t^{\zigzag,d})^2]\right)\\
	&=\frac{1}{2}\left(4-4\mathbb{E}\left[\left\{\int_{s}^tS_u^{\zigzag,d}\dif u\right\}^2\right]\right)\\
	&=2-2\int_{s}^t\int_{s}^t\mathbb{E}[S_u^{\zigzag,d}S_v^{\zigzag,d}]\dif u\dif v. 
\end{align*}
Furthermore, since the covariance function of $S^{\zigzag,d}$ and $\mathcal{T}$ are the same, 
we have 
\begin{align*}
	L(s,t)&=2-2\int_{s}^t\int_{s}^t\mathbb{E}[\mathcal{T}_u\mathcal{T}_v]\dif u\dif v=
	2-2\int_{s}^t\int_{s}^tK(u,v)\dif u\dif v. 
\end{align*}
Finally,
 since  $(Y_t^{\zigzag}-Y_0^{\zigzag})_{t\ge 0}$ and $(2\int_0^tS_u^{\zigzag}\dif u)_{t\ge 0}$ have the same law by (\ref{eq:representation_zz_coordinate}) and the latter process is differentiable, the process 
$Y^{\zigzag}$ has a differentiable version. 
\end{proof}

\subsection{Proof of Theorem \ref{theo:zigzag_first_component}}\label{sec:proof_zigzag_first_component}

\begin{proof}[Proof of Theorem \ref{theo:zigzag_first_component}]
Let $(\xi_t^{\zigzag})_{t\ge 0}$ be the process 
such that  $\xi_0^{\zigzag}\sim \mathcal N(0,1)$ and $v_0^{\zigzag}$ are independent 
and $\mathbb{P}(v_0^{\zigzag}=+1)=\mathbb{P}(v_0^{\zigzag}=-1)=1/2$ and 
$$
\xi_t^{\zigzag}~=~\xi_0^{\zigzag}~+~\int_0^tv_s^{\zigzag}~\dif s\ (t\ge 0), 
$$
and 
$$
v_t^{\zigzag}~=~v_0^{\zigzag}-2\int_{(0,t]\times\mathbb{R}_+} v_{s-}^{\zigzag}~1_{\{z\le \xi_{s-}^{\zigzag})\}}~N(\dif s,\dif z)\ (t\ge 0) 
$$
where $N(\dif t ,\dif x)$ is the homogeneous Poisson measure with the intensity measure $\dif t~\dif x$.
The process $(\xi_t^{\zigzag})_{t\ge 0}$ was studied extensively by \cite{MR3694318}. In particular, it is ergodic by Proposition 2.2 of \cite{MR3694318}. Therefore, for $k\in\mathbb{N}$, if $(\xi_{i,t}^{\zigzag})_{t\ge 0}\ (i=1,\ldots, k)$ are independent copies of $(\xi_t^{\zigzag})_{t\ge 0}$, we have
\begin{equation}\label{eq:zig_zag:lln}
\frac{1}{T}\int_0^T f(\xi_{1,t}^{\zigzag},\ldots, \xi_{k,t}^{\zigzag})\dif t\longrightarrow_{T\rightarrow\infty}
\int_{\mathbb{R}^k} f(x)\phi_k(x)\dif x	
\end{equation}
almost surely, where 
$f:\mathbb{R}^k\rightarrow\mathbb{R}$ is a $N_k(0,I_k)$-integrable function.

On the other hand, the processes $(\xi_{k,t}^{\zigzag,d})_{t\ge 0}\ (k\in\{1,\ldots, d\}, d\in\mathbb{N})$
are independent and identically distributed with the same law as that of $(\xi_t^{\zigzag})_{t\ge 0}$.   
Since 
$$
\frac{1}{T}\int_0^{T} f(\pi_k(\xi_t^{\zigzag,d}))\dif t
=\frac{1}{T}\int_0^{T} f(\xi_{1,t}^{\zigzag,d},\ldots, \xi_{k,t}^{\zigzag,d})\dif t
$$
has the same law as that of the left-hand side of (\ref{eq:zig_zag:lln}), the claim follows. 
\end{proof}

\section{The convergence of the Bouncy Particle Sampler}\label{asec:bps}

\subsection{Some preliminary results}

\subsubsection{Some remarks on semimartingale characteristics and majoration hypothesis}\label{subsubsec:semimartingale}

As commented at the end of Section \ref{subsec:ergodiclimit}, we use Martingale problem approach 
to show scaling limit results instead of the classical Trotter-Kato approach. For this approach, we need some knowledge on semimartingale theory. 
A nice introduction to semimartingale theory can be found in Chapters I and II of \cite{JS}. Our notation will generally follow this reference. 
A semimartingale $X=(X_t)_{t\ge 0}$, is called locally square-integrable if it has the canonical decomposition 
$$
X_t=X_0+M_t+B'_t,  \quad t\ge 0,
$$ 
such that 
$M=(M_t)_{t\ge 0}$ is locally square-integrable local martingale, and $B'=(B'_t)_{t\ge 0}$ is predictable process with finite variation (see Definition II.2.27).
We consider the convergence of a sequence of semimartingales. We prove the convergence by using the so-called characteristics $(B', C,\nu)$ and the modified second characteristic $\tilde{C}'$. We briefly explain these characteristics for locally square-integrable semimartingale. 
%See Definition II.2.16 and Section IX.3b.2 for the detail. 
Note that as in Section IX.3b.2, for a locally square-integrable semimartingale, we can treat the characteristics without truncation function $h(x)$ in Definition II.2.16. 

The first characteristic $B'$ was already introduced as above. 
We denote $\mu^X$ for the random measure associated to the jumps of $X$, that is, 
$$
\mu^X(\omega;\dif t,\dif x)=\sum_{s>0}1_{\{\Delta X_s(\omega)\neq 0\}}\delta_{(s,\Delta X_s(\omega))}(\dif t,\dif x). 
$$
The third characteristic $\nu(\omega;\dif t,\dif x)$ is the intensity measure of the random measure $\mu^X$, and $\tilde{C}'=(\tilde{C}'_t)_{t\ge 0}$ is the predictable quadratic variation of $M$. 
% \joris{Should we not specify the choice of truncation function $h$, as in \cite{JS}?}\kengo{I forgot to mention about that. If the process is locally square-integrable, we can treat the characteristics without modification function. I made some changed in this and the previous paragraph. } 
The second characteristic $C$ is the  predictable quadratic variation of the continuous part of $X$,  but in this section, $C\equiv 0$ since the processes $S^{\bouncy}$ and $S^{\bouncy,d}$ do not have continuous martingale parts. 
 
For example, the Markov process $S^{\bouncy}$ defined in (\ref{eq:limitBPS}) has the following decomposition. By the definition for the stochastic integral with respect to random measures (Section II.1d), the square integrable martingale part is
\begin{align*}
M_t=M_t(S^{\bouncy})
~& =- 2\int_{(0,t]\times\mathbb{R}_+} S^{\bouncy}_{s-}~1_{\{z\le S^{\bouncy}_{s-}\}}~\left\{N(\dif s,\dif z)-\dif s\, \dif z\right\}\\
&\quad +\int_{(0,t]\times\mathbb{R}}(z-S^{\bouncy}_{s-})~\left\{R(\dif s,\dif z)-\rho~\dif s\, \phi(z)\, \dif z\right\}.
\end{align*} 
The predictable process part is 
\begin{align}\label{eq:bps_angle_b}
B_t'=B_t'(S^{\bouncy})
~=~t- 2\int_0^t \left\{(S^{\bouncy}_{s})^+\right\}^2\dif s-\rho\int_0^{t}S^{\bouncy}_{s}\dif s, 
\end{align} 
which is the sum of the deterministic part $t$ and the intensity measure of the random measure part.  
By Theorem II.1.33, the predictable quadratic variation of $M$ is 
\begin{align*}
\tilde{C}'_t(S^{\bouncy})& :=~4\int_0^t\left\{(S^{\bouncy}_{s})^+\right\}^3\dif s+\rho\int_0^t\left(1+(S^{\bouncy}_s)^2\right)\dif s. 
\end{align*}
The random measure $\mu=\mu^{S^{\bouncy}}$ is defined by the integral form
\begin{align*}
	g(x)*\mu_t&:=\int g(x)\mu_t(\dif x)\\
	&:=\int_{(0,t]\times\mathbb{R}_+} g(-2 S^{\bouncy}_{s-})1_{\{z\le S^{\bouncy}_{s-}\}}N(\dif s,\dif z)\\
	&\quad  + \int_{(0,t]\times\mathbb{R}} \left(g(z-S^{\bouncy}_{s-})\right)R(\dif s,\dif z)
\end{align*}
where $g:\mathbb{R}\rightarrow [0,\infty)$ is a continuous bounded function. 
The random measure $\nu(\omega;\dif t,\dif x)$ is its compensator which is defined by
\begin{equation}
	\label{eq:bps_angle_nu}
\begin{split}
	g(x)*\nu_t&:=\int g(x)\nu_t(\dif x)\\
	&:=\int_0^t g(-2 S^{\bouncy}_s)(S^{\bouncy}_s)^+\dif s + \rho\int_0^t\int_{\mathbb{R}} \left(g(z-S^{\bouncy}_{s})\right)\dif s\, \phi(z)\dif z. 
\end{split}
\end{equation}
By this decomposition $S^{\bouncy}$ is also a homogeneous jump process in the sense of Section III.2c, where 
$b(x)=1-2(x^+)^2-\rho x$, $c(x)\equiv 0$, and $K(x,\dif y)=(x^+)\delta_{\{-2x\}}(\dif y)+\rho~\phi(y-x)\dif y$. 

On the other hand, the process  $S^{\bouncy,d}$ is not a Markov process, and has the expression 
\begin{equation}
\begin{split}
S^{\bouncy,d}_t&=~S^{\bouncy,d}_0+t-2\int_{(0,t]\times\mathbb{R}_+} S^{\bouncy,d}_{s-} 1_{\{z\le S^{\bouncy,d}_{s-}\}}N(\dif s,\dif z)\\
&\quad+\int_{(0,t]\times\mathfrak{S}^{d-1}}\left(\langle \xi_s^{\bouncy,d},u\rangle-S^{\bouncy,d}_{s-}\right)R_d(\dif s,\dif u), \label{eq:tildestd}
\end{split}
\end{equation}
by It\^{o}'s formula. 
We denote $(B'^d, C^d,\nu^d)$ and $\tilde{C}'^d$ for the characteristics and modified second characteristic of $S^{\bouncy,d}$. As in the above example, we have
\begin{align*}
B'^d_t&:=t-2\int_0^t\left\{(S^{\bouncy,d}_s)^+\right\}^2\dif s-\rho\int_0^tS^{\bouncy,d}_s\dif s,\\ 
\tilde{C}'^d_t&:=4\int_0^t\left\{(S^{\bouncy,d}_s)^+\right\}^3\dif s+\rho\int_0^t\left(\frac{\|\xi_{s}^{\bouncy,d}\|^2}{d}+(S^{\bouncy,d}_s)^2\right)\dif s, 
\end{align*}
and 
\begin{align*}
	g(x)*\nu^d_t:=\int g(x)\nu^d_t(\dif x)&:=\int_0^t g(-2S^{\bouncy,d}_s)(S^{\bouncy,d}_s)^+\dif s\\
	&\quad + \rho\int_0^t\int_{\mathfrak{S}^{d-1}} g\left(\langle\xi_{s}^{\bouncy,d},u\rangle-S^{\bouncy,d}_s\right)\dif s \psi_d(\dif u)
\end{align*}
for a continuous bounded function $g$. 

Finally, we introduce strong majorisation property which is important to prove tightness of the sequence of processes. For two increasing processes $X=(X_t)_{t\ge 0}, Y=(Y_t)_{t\ge 0}$, $X$ \emph{strongly majorises} $Y$ if $X-Y=(X_t-Y_t)_{t\ge 0}$
is an increasing process, that is, almost all paths of $X_t(\omega)-Y_t(\omega)$ is increasing; see \cite[Definition VI.3.34]{JS}. 
We denote $Y\prec X$ if $X$ strongly majorises $Y$.

\subsubsection{Some remark on spherically symmetric distribution}\label{sec:remark_spherical}

Some of the characteristics of semimartingales $S^{\bouncy,d}$ and $Y^{\bouncy,d}$ are written by the expectation of $U^d$ which will be defined in (\ref{eq:ud}), and $U^d$ will be approximated by a Gaussian random variable. We will quantify this approximation error by the result in {\cite{MR898502}.

As mentioned above, we need to show that 
\begin{equation}\label{eq:ud}
	U^d:=d^{1/2}\langle e, v\rangle,
\end{equation}
where $v\sim\psi_d$ 
and $e$ is a unit vector, converges to the standard normal distribution %\joris{``the standard normal distribution'' / ``a normal distribution'' } 
and we need to quantify the approximation error. 
The distribution is extensively studied by \cite{MR898502}.  
For example, since $|\langle e,v\rangle|^2$ follows the Beta distribution with parameters $1/2$ and $(d-1)/2$, we have
\begin{equation}\label{eq:beta_moment}
\mathbb{E}\left[|U^d|^\alpha\right]=\frac{d^{\alpha/2}B(\frac{\alpha+1}{2},\frac{d-1}{2})}{B(\frac{1}{2},\frac{d-1}{2})}\longrightarrow_{d\rightarrow\infty}  \frac{\Gamma(\frac{\alpha+1}{2})}{\Gamma(\frac{1}{2})}2^{\alpha/2}
\end{equation}
for $\alpha>-1$, where we used Stirling's approximation.
Moreover, 
\begin{equation}\label{eq:tv_convergence}
	\|\mathcal{L}(U^d )-\mathcal N(0,1)\|_{\mathrm{TV}}=O(1/d)
\end{equation}
for $\|\nu\|_{\mathrm{TV}}=\sup|\int h(x)\nu(\dif x)|$ where the supremum is evaluated over those measurable function $h(x)$ bounded above by $1$. 
Since the expectations in the semimartingale characteristics are not bounded functions, we need the following proposition to quantify the approximation error. 

%\begin{lem}
%For any $\beta>0$, 
%$$
%\limsup_{d\rightarrow\infty}\mathbb{E}[\exp(\beta|U^d|)]<\infty. 
%$$	
%\end{lem}
%
%\begin{proof}
%	By (\ref{eq:beta_moment}), we have
%\begin{align*}
%	\mathbb{E}[\exp(\beta|U^d|)]=\sum_{n=0}^\infty\frac{\beta^n}{n!}\frac{d^{n/2}B(\frac{n+1}{2},\frac{d-1}{2})}{B(\frac{1}{2},\frac{d-1}{2})}
%=\sum_{n=0}^\infty\frac{\beta^n}{n!}\frac{d^{n/2}\Gamma(d/2)\Gamma((n+1)/2)}{\Gamma((n+d)/2)\Gamma(1/2)}=:  
%\sum_{n=0}^\infty\frac{\beta^n}{n!}a(n,d). 
%\end{align*}
%If $n=2m$ for $m\in\mathbb{N}_0$, we have
%\begin{align*}
%a(n,d)
%=d^{n/2}\frac{(1/2)(1/2+1)\cdots((n+1)/2-1)}{(d/2)(d/2+1)\cdots((n+d)/2-1)}\le (n-1)!!
%\end{align*}
%Therefore, 
%\begin{align*}
%\sum_{m=0}^\infty\frac{\beta^{2m}}{(2m)!}a(n,d)
%\le 
%\sum_{m=0}^\infty\frac{\beta^{2m}}{(2m)!}(2m-1)!!=
%\sum_{m=0}^\infty\frac{\beta^{2m}2^m}{m!}=e^{2\beta^2}<\infty. 
%\end{align*}
%If $n=2m+1$ for $m\in\mathbb{N}_0$, by the formula 
%$$
%\lim_{n\rightarrow\infty}\frac{\Gamma(n+1/2)}{\Gamma(n)n^{1/2}}=1, 
%$$
%we have $a(n,d)\le C~a(n+1,d)\le C~n!!$ for some $C>0$.  Hence
%\begin{align*}
%\sum_{m=0}^\infty\frac{\beta^{2m+1}}{(2m+1)!}a(2m+1,d)
%\le 
%C\sum_{m=0}^\infty\frac{\beta^{2m+1}}{(2m+1)!}(2m+1)!!=C
%\sum_{m=0}^\infty\frac{\beta^{2m+1}2^{m+1}}{m!}=(2\beta)Ce^{2\beta^2}<\infty. 
%\end{align*}
%By combining even and odd cases, the claim follows. 
%\end{proof}

\begin{prop}\label{prop:diaconis}
For any $\epsilon>0, k\in\mathbb{N}$ and $W\sim \mathcal{N}(0,1)$, 
\begin{equation}\nonumber
	\sup_{|h(x)|\le (1+|x|)^k}\left|\mathbb{E}[h(U^d)]-\mathbb{E}[h(W)]\right|=O(d^{\epsilon-1}). 
\end{equation}
\end{prop}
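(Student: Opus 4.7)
The plan is a truncation argument that upgrades the total variation bound (\ref{eq:tv_convergence}) from bounded test functions to polynomially growing ones, using the uniform moment estimates in (\ref{eq:beta_moment}). Given a measurable $h$ with $|h(x)|\le (1+|x|)^k$, I would choose a truncation level $M=M_d\to\infty$ to be optimised, and decompose $h=h_M+(h-h_M)$ with $h_M(x):=h(x)\,1_{\{|x|\le M\}}$.

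For the truncated piece, $|h_M(x)|\le (1+M)^k$, so applying (\ref{eq:tv_convergence}) to the bounded function $h_M/(1+M)^k$ yields
$$
\left|\mathbb{E}[h_M(U^d)]-\mathbb{E}[h_M(W)]\right|\le (1+M)^k\,\|\mathcal{L}(U^d)-\mathcal N(0,1)\|_{\mathrm{TV}}=O(M^k/d).
$$
For the tail, fix any $\ell\in\mathbb{N}$; then Markov's inequality gives
$$
\mathbb{E}\bigl[|h(U^d)-h_M(U^d)|\bigr]\le \mathbb{E}\bigl[(1+|U^d|)^k\,1_{\{|U^d|>M\}}\bigr]\le M^{-\ell}\,\mathbb{E}\bigl[(1+|U^d|)^{k+\ell}\bigr]=O(M^{-\ell})
$$
uniformly in $d$, since (\ref{eq:beta_moment}) implies that the moment on the right is bounded uniformly in $d$. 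The analogous (in fact exponentially decaying) bound holds with $U^d$ replaced by $W\sim\mathcal N(0,1)$.

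Balancing the two contributions by setting $M=d^{1/(k+\ell)}$ yields a total error of order $O(d^{-\ell/(k+\ell)})$. Given $\epsilon>0$, I would then pick $\ell$ large enough that $\ell/(k+\ell)>1-\epsilon$ (for instance $\ell=\lceil k/\epsilon\rceil$), obtaining the claimed bound $O(d^{\epsilon-1})$ uniformly over all $h$ satisfying the prescribed growth condition. The argument is essentially routine: both ingredients—the TV rate $O(1/d)$ and the uniform polynomial moment bounds—are already in hand, so there is no genuine obstacle. The only subtlety is that one cannot avoid the $\epsilon$-loss, because the truncation level must grow with $d$ to keep the tail contribution negligible, and this growth inflates the TV-induced error by a factor $M^k$; optimising simply reveals that this trade-off costs an arbitrarily small polynomial factor.
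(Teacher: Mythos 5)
Your proof is correct and follows essentially the same strategy as the paper: truncate, control the bounded part via the total variation rate $O(1/d)$ from (\ref{eq:tv_convergence}), and control the tail via the uniform moment bounds (\ref{eq:beta_moment}). The only cosmetic difference is that the paper truncates on the event $\{|h(x)|>d^{\epsilon}\}$ and bounds the tail with the exponent $(1-\epsilon)/\epsilon$, whereas you truncate in $x$ at a level $M=d^{1/(k+\ell)}$ and optimise over integer moments; both yield the same $O(d^{\epsilon-1})$ bound uniformly in $h$.
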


\begin{proof}
Without loss of generality, we can assume $\epsilon\in (0,1/2)$. 
Let $|h(x)|\le (1+|x|)^k$. 
To apply (\ref{eq:tv_convergence}), we consider a bounded modification 
\begin{equation*}
h_a=h(x)~1_{\{|h(x)|\le a\}}
\end{equation*}
for $a>0$. Then
\begin{align*}
\left|\mathbb{E}[h_{d^{\epsilon}}(U^d)]
-
\mathbb{E}[h_{d^{\epsilon}}(W)]
\right|\le 
d^{\epsilon} \|\mathcal{L}(U^d )-\mathcal N(0,1)\|_{\mathrm{TV}}=O(d^{\epsilon-1}). 
\end{align*}
By Markov's inequality, the error due to the modification of $h(U^d)$ is 
\begin{align*}
\left|\mathbb{E}[h_{d^{\epsilon}}(U^d)]
-
\mathbb{E}[h(U^d)]
\right|& \le 
\mathbb{E}\left[|h(U^d)|, |h(U^d)|>d^{\epsilon}\right]\\
& \le \mathbb{E}\left[|h(U^d)|\left\{\frac{|h(U^d)|}{d^{\epsilon}}\right\}^{(1-\epsilon)/\epsilon}\right]\\
& \le d^{\epsilon - 1}\mathbb{E}\left[(1+|U^d|)^{k(1+(1-\epsilon)/\epsilon)}\right]=O(d^{\epsilon -1})
\end{align*}
by (\ref{eq:beta_moment}). Similarly, the error 
due to the modification of $h(W)$ is dominated by 
\begin{align*}
\left|\mathbb{E}[h_{d^{\epsilon}}(W)]
-
\mathbb{E}[h(W)]
\right|& \le 
\mathbb{E}\left[|h(W)|, |h(W)|>d^{\epsilon}\right]\\
& \le \mathbb{E}\left[|h(W)|\left\{\frac{|h(W)|}{d^{\epsilon}}\right\}^{(1-\epsilon)/\epsilon}\right]\\
& \le d^{\epsilon-1}\mathbb{E}_y\left[(1+|W|)^{k(1+(1-\epsilon)/\epsilon)}\right]=O(d^{\epsilon-1}). 
\end{align*}
Hence the claim follows by the triangle inequality. 
\end{proof}

\subsubsection{Remark on Stein's method}

We will use a martingale problem approach for the convergence of stochastic processes and hence we will show the convergence of 
characteristics of semimartingales. In order to prove the convergence of characteristics, 
we will use Stein's identity and Stein's method.  

Thanks to the results in Section \ref{sec:remark_spherical}, the semimartingale characteristics
are, essentially, written by expectations with respect to normal distributions. 
For calculation involving Gaussian random variables, Stein's identity is useful:
\begin{equation}\label{eq:stein}
	\mathbb{E}[Wf(W)]=\mathbb{E}[f'(W)]
\end{equation}
 where $W\sim \mathcal N(0,1)$ and $f$ is sufficiently smooth. 
% By applying this identity to $W f(W)$ in place of $f(W)$, we have 
%\begin{equation}\label{eq:stein2}
%	\mathbb{E}[(W^2-1)f(W)]=E[Wf'(W)]. 
%\end{equation}
% Stein's identity is a useful tool to compute quantity that includes Gaussian random variables such as the calculation of the squared error loss of the James-Stein estimator.

\textbf{Stein's equation} (\ref{eq:stein}) characterises the standard normal distribution: $W\sim \mathcal N(0,1)$ if and only if (\ref{eq:stein}) is satisfied for every differentiable function $f$ with $\mathbb{E}|f'(W)|<\infty$. Moreover,  by using \textbf{Stein's method}, the deviation from $\mathcal N(0,1)$ is bounded by 
the deviation from  Stein's equation. The usefulness of the Stein's method is illustrated in the monographs
\citet{Stein} and \citet{MR2962301}. 
In this paper, we will use the following result due to Proposition 3.2.2 of \cite{MR2962301}. 

\begin{lem}\label{lem:stein}
For any $h:\mathbb{R}\rightarrow\mathbb{R}$ such that $\mathbb{E}[|h(W)|]<\infty$ for $W\sim\mathcal{N}(0,1)$,  there is 
the unique solution $f:\mathbb{R}\rightarrow\mathbb{R}$ of the ordinary differential equation (called Stein's equation)
\begin{equation}\label{Steineq0}
Lf(s):=f'(s)-sf(s)=h(s)-\mathbb{E}[h(W)] 
\end{equation}
such that $\lim_{x\rightarrow\pm\infty}\phi(x)f(x)=0$. 
\end{lem}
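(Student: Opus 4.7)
The equation $f'(s)-sf(s)=h(s)-\mathbb{E}[h(W)]$ is a first-order linear ODE, so the natural approach is to find an integrating factor, integrate, and then impose the boundary behaviour. The obvious choice is $\phi(s)=(2\pi)^{-1/2}e^{-s^2/2}$, since
\begin{equation*}
\bigl(\phi(s)f(s)\bigr)'=\phi(s)f'(s)-s\phi(s)f(s)=\phi(s)\bigl(f'(s)-sf(s)\bigr).
\end{equation*}
Setting $\tilde h(u):=h(u)-\mathbb{E}[h(W)]$ and multiplying Stein's equation by $\phi(s)$ therefore turns it into $(\phi f)'(s)=\phi(s)\tilde h(s)$.

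The first step is to define
\begin{equation*}
f(s):=\frac{1}{\phi(s)}\int_{-\infty}^{s}\phi(u)\tilde h(u)\,\dif u,
\end{equation*}
which is well defined because $\mathbb{E}[|h(W)|]<\infty$ implies $\phi\tilde h\in L^{1}(\mathbb{R})$, and to verify directly that $f$ solves~(\ref{Steineq0}). The boundary condition at $-\infty$ is immediate from the definition, and the boundary condition at $+\infty$ follows because
\begin{equation*}
\int_{-\infty}^{\infty}\phi(u)\tilde h(u)\,\dif u=\mathbb{E}[h(W)]-\mathbb{E}[h(W)]=0,
\end{equation*}
so that $\phi(s)f(s)\to 0$ as $s\to+\infty$ as well. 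Equivalently, one can write $f(s)=-\phi(s)^{-1}\int_{s}^{\infty}\phi(u)\tilde h(u)\,\dif u$, and the equivalence of these two representations is exactly the centering condition $\mathbb{E}[\tilde h(W)]=0$.

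For uniqueness, suppose $f_{1},f_{2}$ are two solutions satisfying the decay condition. Then $g:=f_{1}-f_{2}$ solves the homogeneous equation $g'(s)=sg(s)$, whose general solution is $g(s)=Ce^{s^{2}/2}$ for some constant $C\in\mathbb{R}$. But then $\phi(s)g(s)=C/\sqrt{2\pi}$, which tends to $0$ only when $C=0$, giving $f_{1}\equiv f_{2}$.

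There is no serious obstacle here; the only mildly delicate point is checking that the proposed $f$ is differentiable and that the boundary behaviour at $+\infty$ follows cleanly from the centering of $\tilde h$ against the Gaussian measure. Both are routine once the integrating factor is written down.
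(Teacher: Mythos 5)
Your proof is correct. The paper does not prove this lemma itself but cites Proposition 3.2.2 of \cite{MR2962301}; your integrating-factor argument is precisely the standard proof of that cited result, and the explicit representation $f(s)=\phi(s)^{-1}\int_{-\infty}^{s}\phi(u)\tilde h(u)\,\dif u = -\phi(s)^{-1}\int_{s}^{\infty}\phi(u)\tilde h(u)\,\dif u$ that you derive is exactly the formula (equation (3.23) of that reference) that the paper invokes later in the proof of Lemma~\ref{lem:stein_bound}.
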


There are many important properties of the solution of Stein's equation. We remark here the integration-by-parts formula
\begin{equation}\label{eq:ibp}
	\int (Lf)(x)g(x)\phi(x)\dif x=-\int f(x)g'(x)\phi(x)\dif x
\end{equation}
for smooth functions $f, g$. Also, we would like to remark the following lemma  which provides a sufficient condition for $\mathcal N(0,1)$-integrability of Stein's solution. 
For $\beta>0$, let 
$$
\vertiii{f}_\beta=\sup_{x\in\mathbb{R}}e^{-\beta|x|}|f(x)|. 
$$
If $\vertiii{f}_\beta<\infty$, $f$ is $\mathcal{N}(0,1)$-integrable. 

\begin{lem}\label{lem:stein_bound}
For $\beta>0$, there exists $C_\beta<\infty$ such that 
for any $h:\mathbb{R}\rightarrow\mathbb{R}$, such that 
$\mathbb{E}[h(W)]=0$ for $W\sim \mathcal{N}(0,1)$, we have
\begin{align}\label{eq:stein_bound}
	\vertiii{f}_\beta\le C_\beta\vertiii{h}_\beta, 
\end{align}
where $f$ is the solution to~\eqref{Steineq0} such that $\lim_{x\rightarrow\pm\infty}\phi(x)f(x)=0$. 
\end{lem}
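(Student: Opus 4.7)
The plan is to use the explicit representation of the solution to Stein's equation and then bound the resulting integral using Gaussian tail estimates (Mills' ratio).

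First, recall that the unique solution to $Lf(s)=h(s)-\mathbb{E}[h(W)]$ with $\lim_{x \to \pm\infty} \phi(x) f(x) = 0$ is, since $\mathbb{E}[h(W)]=0$,
\[
f(x) = \frac{1}{\phi(x)}\int_{-\infty}^x h(y)\phi(y)\,\dif y = -\frac{1}{\phi(x)}\int_x^{\infty} h(y)\phi(y)\,\dif y,
\]
the two expressions being equal by the zero-mean assumption. I will use the second representation for $x \ge 0$ and the first for $x \le 0$; by the symmetry $\phi(-x) = \phi(x)$, it suffices to bound $f(x)$ for $x \ge 0$.

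Fix $x \ge 0$. Bounding $|h(y)| \le \vertiii{h}_\beta e^{\beta|y|}$ and using $e^{\beta y}\phi(y) = e^{\beta^2/2}\phi(y-\beta)$ for $y \ge 0$, one obtains
\[
|f(x)| \le \frac{\vertiii{h}_\beta}{\phi(x)}\int_x^\infty e^{\beta y}\phi(y)\,\dif y = \vertiii{h}_\beta \, e^{\beta^2/2} \, \frac{1-\Phi(x-\beta)}{\phi(x)}.
\]
The task reduces to showing $(1-\Phi(x-\beta))/\phi(x) \le C_\beta' e^{\beta x}$ uniformly in $x \ge 0$. For $x \ge 2\beta$ (say), Mills' ratio gives $1-\Phi(x-\beta) \le \phi(x-\beta)/(x-\beta)$ and then $\phi(x-\beta)/\phi(x) = e^{\beta x - \beta^2/2}$, so the ratio is bounded by $e^{\beta x}/(x-\beta)$, giving the required bound (with a constant depending on $\beta$). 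For $0 \le x \le 2\beta$, the expression $(1-\Phi(x-\beta))/\phi(x)$ is a continuous function of $x$ on a compact set, hence bounded by some constant, which is itself trivially $\le C_\beta' e^{\beta x}$.

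For $x \le 0$, use the first representation of $f$ and repeat the argument symmetrically, bounding $|h(y)|\phi(y) \le \vertiii{h}_\beta e^{\beta|y|}\phi(y)$ and splitting the integral according to the sign of $y$ (for $y \le 0$ use $e^{\beta|y|}\phi(y) = e^{\beta^2/2}\phi(y+\beta)$; for $0 \le y$ the integral is a finite constant). Combining the two cases yields $|f(x)| \le C_\beta \vertiii{h}_\beta e^{\beta |x|}$ for all $x \in \RR$, which is exactly the claim. The only mildly delicate point is controlling the Mills-ratio quotient uniformly across both the tail regime and the compact regime near $x=0$; this is entirely elementary but must be done carefully to extract the explicit exponential growth $e^{\beta x}$ rather than a larger exponent.
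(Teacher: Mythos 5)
Your proposal is correct and follows essentially the same route as the paper's proof: both use the explicit representation $f(x)=-\phi(x)^{-1}\int_x^\infty h(y)\phi(y)\,\dif y$, the bound $|h(y)|\le \vertiii{h}_\beta e^{\beta|y|}$, the Gaussian shift $e^{\beta y}\phi(y)=e^{\beta^2/2}\phi(y-\beta)$, a Mills-ratio-type tail estimate for large $x$ (the paper uses $\Phi(-t)\le\phi(t)$ for $t\ge 1$ where you use $1-\Phi(t)\le\phi(t)/t$, an immaterial difference), and continuity on a compact set near the origin. The only cosmetic imprecision is your remark about splitting the integral by the sign of $y$ when $x\le 0$, which is unnecessary since the integration range $(-\infty,x]$ already lies in $(-\infty,0]$.
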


\begin{proof}
Without loss of generality, we can assume $\vertiii{h}_\beta<\infty$. By equation (3.23) of \cite{MR2962301}, Stein's solution is given by
\begin{align*}
	f(x)=\phi(x)^{-1}\int_{-\infty}^x~h(y)~\phi(y)\dif y
	=-\phi(x)^{-1}\int_{x}^\infty~h(y)~\phi(y)\dif y. 
\end{align*}
Therefore, if $x\ge 0$, we have
\begin{align}
		e^{-\beta x}|f(x)|
	&=(e^{\beta x}\phi(x))^{-1}\left|\int_{x}^\infty~h(y)~\phi(y)\dif y\right|\nonumber\\
	& \le (e^{\beta x}\phi(x))^{-1}\int_{x}^\infty~\left|h(y)\right|~\phi(y)\dif y\nonumber\\
	& \le ~\vertiii{h}_\beta~(e^{\beta x}\phi(x))^{-1}\int_{x}^\infty e^{\beta y} \phi(y)\dif y. \label{eq:upper_bound_ft}
\end{align}
With a similar calculation for $x\le 0$, we obtain the inequality (\ref{eq:stein_bound}) with the constant
\begin{align*}
	C_\beta=\sup_{x\ge 0}c_\beta(x),\ c_\beta(x):=\left(e^{\beta x}\phi(x)\right)^{-1}\int_{x}^\infty e^{\beta y}\phi(y)\dif y. 
\end{align*}
Observe that $e^{\beta x}\phi(x)=e^{\beta^2/2}\phi(x-\beta)$. 
Also, if $y\ge 1$, we have $\phi(y)\le y\phi(y)$ and hence 
	$\Phi(-x)\le \phi(x)$ by integrating $y\in [x,\infty)$. Therefore, 
	if $x\ge \beta+1$, 
\begin{align*}
	c_\beta(x)
	& = \phi(x-\beta)^{-1}\int_{x}^\infty \phi(y-\beta)\dif y\\
	& = \phi(x-\beta)^{-1}\Phi(-(x-\beta))\le 1. 
\end{align*}	
Also, $x\mapsto c_\beta(x)$ is continuous, and hence bounded on 
$[0,\beta+1]$. 
Hence $C_\beta<\infty$ and the claim follows. 
\end{proof}

%
%\begin{cor}\label{cor:stein_bound}
%	Let $\mathcal{T}$ be the Markov process defined in (\ref{eq:limitzigzag}), 
%	and let $h_t(x)=\mathbb{E}[\mathcal{T}_t|\mathcal{T}_0=x]$. Then
%	$\int h_t(x)\phi(x)\dif x=0$. Also, $h_t(x)$ and Stein's solution
%	$f_t(x)$ of the Stein's equation $Lf_t=h_t$ satisfies 
%	$$
%	\vertiii{h_t}\le C\gamma^t,\ \vertiii{f_t}\le C\gamma^t
%	$$
%	for some $C>0$ and $\gamma\in(0,1)$. 
%\end{cor}
%
%\begin{proof}
%	By stationarity of the process $\mathcal{T}$, we have 
%	$$
%	\int h_t(x)\phi(x)\dif x=\mathbb{E}[\mathbb{E}[\mathcal{T}_t|\mathcal{T}_0]]=\mathbb{E}[\mathcal{T}_t]=0. 
%	$$
%	Let $V(x)$ be the drift function defined in Theorem \ref{theo:ergodicity_t}. We know that $V(x)\le 2e^{|x|}$. Then by $V$-uniform ergodicity, there exists a constant $C>0$ and $\gamma>0$ such that 
%\begin{align*}
%		|h_t(x)|\le C\gamma^tV(x)\le 2C\gamma^t e^{|x|}~
%	\Longrightarrow~
%	\vertiii{h_t}\le 2C\gamma^t.
%\end{align*}
%Hence the claim follows by Lemma \ref{lem:stein_bound}. 
%\end{proof}

\subsection{Proof of Theorem \ref{theo:bps_limit}}

\begin{proof}[Proof of Theorem \ref{theo:bps_limit}]
We apply \cite[Theorem IX.3.48]{JS} to $S^{\bouncy,d}$
with stopping time
$$
\tau_a(S^{\bouncy})=\inf\{t>0: |S^{\bouncy}_t|\ge a\ \mathrm{or}\ |S^{\bouncy}_{t-}|\ge a\}
$$
for $a>0$. Let $\tau_a^d=\tau_a(S^{\bouncy,d})$. Firstly we prove the local strong majoration hypothesis (i) of Theorem IX.3.48. 
By the expression of the predictable process $B'$ in (\ref{eq:bps_angle_b}), the total variation process (see Section I.3a) of $B'$ up to the stopping time $\tau_a$ is 
\begin{align*}
\mathrm{Var}(B')^{\tau_a}_t&=\int_0^{t\wedge \tau_a}\left|1-2\left\{(S^{\bouncy}_{s})^+\right\}^2-\rho~S^{\bouncy}_s\right|\dif s. 
\end{align*}
By construction of $\nu$ in (\ref{eq:bps_angle_nu}), we have
\begin{align*}
\left\{ |x|^2*\nu\right\}^{\tau_a}_t=\int|x|^2\nu_{t\wedge \tau_a}(\dif x)&=\int_0^{t\wedge \tau_a}\left\{4\left\{(S^{\bouncy}_{s})^+\right\}^3+\rho(1+(S^{\bouncy}_s)^2)\right\}\dif s.
\end{align*}
Hence 
$$
\mathrm{Var}(B')^{\tau_a}\prec F_1(a),\ 
\left\{ (|x|^2)*\nu\right\}^{\tau_a}\prec F_2(a)
$$
where 
$$
F_1(a)_t=t~(1+2a^2+\rho~a),\ 
F_2(a)_t=t~( 4a^3+\rho(1+a^2)). 
$$
Note that $C\equiv 0$. 
Thus (i) of Theorem IX.3.48 follows, since $\mathrm{Var}(B')^{\tau_a}$ and $\left\{ (|x|^2)*\nu\right\}^{\tau_a}$ are strongly majorised by $F(a)=F_1(a)+F_2(a)$.

Secondly we prove (ii)-(v) of Theorem IX.3.48. If we take $b>2a$, then 
\begin{align*}
\left\{ |x|^21_{\{|x|>b\}}*\nu\right\}^{t \wedge \tau_a} & = 
\rho\int_0^{t\wedge\tau_a}\int_\mathbb{R}|z-S_s^{\bouncy}|^21_{\{|z-S_s^{\bouncy}|>b\}}\phi(z)\dif z\dif s
\\
& \le \rho\int_0^t\int_\mathbb{R}(|z|+a)^21_{\{|z|+a>b\}}\phi(z)\dif z\dif s\longrightarrow_{b\rightarrow+\infty}~0
\end{align*}
which proves (ii) of Theorem IX.3.48. The existence and uniqueness of the  solution of (\ref{eq:limitBPS}) is proved in Section \ref{subsec:ergodiclimit}. Therefore, existence and uniqueness of the corresponding martingale problem follows from Theorem 2.3 of \cite{MR2789081} together with the fact that $\mathbb{P}(\tau_\infty=\infty)=1$. Thus local uniqueness condition (iii) of Theorem IX.3.48 comes from Lemma IX 4.4. Continuity condition (iv) is obvious. 
Since we assume stationarity, both $S^{\bouncy,d}_0$ and $S^{\bouncy}_0$ follows the standard normal distribution. Thus (v) of Theorem IX.3.48 follows. 

Finally we check the condition (vi) of Theorem IX.3.48. 
Recall that, by construction, 
\begin{equation}
\label{eq:raduis_estimate}
	\|\xi_t^{\bouncy,d}-\xi_0^{\bouncy,d}\|\le t~\Longrightarrow~
\sup_{0\le t\le T}\left|\frac{\|\xi_t^{\bouncy,d}\|^2}{d}-1\right|=o_\mathbb{P}(1)
\end{equation}
for any $0\le t\le T$. Thus for any $0\le s\le t$, 
$$
B_s'^d-B_s'(S^{\bouncy,d})=0,\ 
|\tilde{C}_s'^d-\tilde{C}'_s(S^{\bouncy,d})|\le \rho\int_0^t\left|~\frac{\|\xi_{s}^{\bouncy,d}\|^2}{d}-1~\right|\dif s=o_\mathbb{P}(1), 
$$
and hence the conditions [Sup-$\beta'_{\mathrm{loc}}$]  and $[\gamma'_{\mathrm{loc}}$-D] of (vi) are satisfied. 
%\joris{We should also check the condition [$\delta_{loc}$-D], p. 554. Looks like we would need something along the lines of
%\[ \sup_{0 \leq t \leq T} |\langle \xi_s^{\bouncy,d}, u \rangle - Z| =o_{\mathbb{P}}(1),\]
%where $Z$ is a standard normal.}\kengo{Thank you for the double check.} 
For Condition IX.3.49 of (vi), 
let $g_b(x)=x^2 1_{\{|x|>b\}}$ for $b>2a$. Then 
\begin{align*}
	g_b * \nu_{t\wedge \tau_a^d}^d & =\rho\int_0^{t \wedge \tau_a^d} \int_{\mathfrak{S}^{d-1}}g_b(\langle\xi_s^{\bouncy,d},u\rangle-S^{\bouncy,d}_s)\psi_d(\dif u)\dif s \\
	& \leq \rho \int_0^t \int_{\mathfrak{S}^{d-1}} g_b(|\langle\xi_s^{\bouncy,d},u\rangle|+a)  \psi_d(\dif u)\dif s.
\end{align*}
%Since $g(x)$ is a monotone increasing function \joris{but it is not}, the integrand is bounded above by 
%$g(\langle\xi_s^{\bouncy,d},u\rangle)$. 
%\joris{I would say the following: 
By stationarity together with the fact that 
$\mathcal{L}(\langle\xi_0^{\bouncy,d},u\rangle)=\mathcal{L}(S^{\bouncy,d}_0)$, we have
\begin{align*}
	\mathbb{P}\left( g_b * \nu_{t\wedge \tau_a^d}^d>\epsilon\right)
&\le
\epsilon^{-1}\mathbb{E}\left[ \rho \int_0^t \int_{\mathfrak{S}^{d-1}} g_b(|\langle\xi_s^{\bouncy,d},u\rangle|+a) \psi_d(\dif u)\dif s\right]\\
&= \epsilon^{-1}t~\rho~\mathbb{E}\left[g_b(|S^{\bouncy,d}_0|+a)\right]. 
\end{align*}
Therefore, by taking the lim sup as $d \rightarrow \infty$ of the expectation on the right-hand side of the above inequality gives
$$
\limsup_{d \rightarrow \infty} \mathbb{P}\left( g_b * \nu_{t\wedge \tau_a^d}^d>\epsilon\right) \leq  \epsilon^{-1}~t~\rho~\mathbb{E}[g_b(|S^{\bouncy}_0|+a)]\longrightarrow_{b\rightarrow\infty} 0
$$
by $S_0^{\bouncy}\sim \mathcal{N}(0,1)$ which establishes Condition IX.3.49 of (vi). Finally, we check [$\delta_{\mathrm{loc}}$-D] of (iv). 
%}
%
By construction for any bounded, continuous function $g$, we have
\begin{align*}
\epsilon_t^d:& =~g(x)*\nu_t^d-(g(x)*\nu_t)\circ S^{\bouncy,d}\\
& =~
\rho\int_0^t\int_{\mathfrak{S}^{d-1}}g(\langle \xi_s^{\bouncy,d},u\rangle-S^{\bouncy,d}_s)\psi_d(\dif u)\dif s-
\rho\int_0^t\int_\mathbb{R} g(z-S^{\bouncy,d}_s)\phi(z)\dif z\dif s. 
\end{align*}
Therefore, by stationarity of the process, we have
\begin{align*}
&\mathbb{E}\left[\sup_{0\le s\le t}|\epsilon_s^d|\right]\\
&\le t~\rho~\mathbb{E}\left[\left|\int_{\mathfrak{S}^{d-1}}g(\langle \xi_0^{\bouncy,d},u\rangle-S^{\bouncy,d}_0)\psi_d(\dif u)-\int_\mathbb{R} g(z-S^{\bouncy,d}_0)\phi(z)\dif z\right|\right]\\
&\le t~\rho~\|g\|_\infty~\mathbb{E}\left[\left\|\mathcal{L}_0(\langle \xi_0^{\bouncy,d},u\rangle)-\mathcal{N}(0,1)\right\|_{\mathrm{TV}}\right], 
\end{align*}
where $u\sim\psi_d$ and $\mathcal{L}_0(X)$ is the conditional distribution of $X$ given $\xi_0^{\bouncy,d}$ and $ v_0^{\bouncy, d}$, and 
$\|g\|_\infty=\sup_{x\in\mathbb{R}}|g(x)|$. By the property of the spherically symmetric distribution $\psi_d$,  we have
\begin{equation}\label{eq:law_of_s}
\begin{split}
\mathcal{L}_0(\langle \xi_{0}^{\bouncy,d},u\rangle )
&=\mathcal{L}_0\left(\frac{\|\xi_0^{\bouncy,d}\|}{d^{1/2}}d^{-1/2}\left\langle \frac{\xi_0^{\bouncy,d}}{\|\xi_0^{\bouncy,d}\|},u\right\rangle \right)=\mathcal{L}_0(\alpha^d~U^d),\\ 
(\alpha^d)^2&:=\frac{\|\xi_0^{\bouncy,d}\|^2}{d}. 
\end{split}
\end{equation}
Therefore the total variation distance in the above expectation is 
\begin{align*}
\left\|\mathcal{L}_0(\alpha^d U^d)-\mathcal{N}(0,1)\right\|_{\mathrm{TV}}
&\le 
\left\|\mathcal{L}_0(\alpha^d U^d)-\mathcal{N}(0,(\alpha^d)^2)\right\|_{\mathrm{TV}}\\
&\quad +
\left\|\mathcal{N}(0,(\alpha^d)^2)-\mathcal{N}(0,1)\right\|_{\mathrm{TV}}. 
\end{align*}
The first term in the right-hand side equals to (\ref{eq:tv_convergence}) which converges to $0$, and the second term is dominated by 
$$
2\left|1-(\alpha^d)^2\right|~\longrightarrow_{d\rightarrow\infty}~0\ \mathrm{in}\ \mathbb{P}
$$
by Proposition 3.6.1 of \cite{MR2962301}. This proves [$\delta_{\mathrm{loc}}$-D]. 
Thus, the condition (iv) of Theorem IX.3.48 of \cite{JS} is proved. 
Hence the claim follows. 
\end{proof}

\subsection{Proof for Corollary \ref{cor:bps_switches}}

\begin{proof}[Proof for Corollary \ref{cor:bps_switches}]
By the expression (\ref{eq:tildestd}), 
the expected number of switches of $S^{\bouncy,d}$ per unit time  is 
\begin{align*}
\mathbb{E}\left[\sum_{0\le t\le T}1_{\{\Delta S^{\bouncy,d}_t\neq 0\}}\right]
	&=~
\mathbb{E}\left[\int_{(0,T]\times\mathbb{R}_+} 1_{\{z\le S^{\bouncy,d}_{s-}\}}N(\dif s,\dif z)
+ R_d((0,T]\times\mathbb{R})\right]\\
&=~\mathbb{E}\left[\int_0^T(S^{\bouncy,d}_s)^+\dif s
+\rho~T\right]\\
&=~T~\mathbb{E}\left[(S^{\bouncy,d}_0)^++\rho\right]\\
&=~T~\left\{\int_\mathbb{R}x^+\phi(x)\dif x + \rho\right\}=T\left(\frac{1}{\sqrt{2\pi}}+\rho\right).
\end{align*}
\end{proof}

\subsection{Proof for Theorem \ref{theo:bps_ll}}\label{sec:bps_diffusion}

Thanks to the memoryless property of the exponential distribution, we can assume that a refreshment jump occurs at $t=0$ since it does not affect the law of $(\xi_t^{\bouncy,d},v_t^{\bouncy,d})_{t\ge 0}$. 
By Proposition II.1.14 of \cite{JS}, we can construct a probability space so that there are stopping times $0=\sigma_0<\sigma_1<\sigma_2<\cdots$ 
with $\mathcal{F}_{\sigma_n}$-measurable random variables $W_n^d\ (n\ge 1)$ such that 
\begin{equation}\label{eq:r_representation}
R_d(\dif t,\dif x)=\sum_{n\ge 1}1_{\{\sigma_n<\infty\}}\delta_{(\sigma_n,W_n^d)}(\dif t,\dif x),
\end{equation}
where $\mathbb{P}(W_n^d\in A|\mathcal{F}_{\sigma_n-})=\psi_d(A)$. 
%
%Then $(\sigma_t)_{t\ge 0}$ is the Poisson process with intensity $\mathbb{E}[\sigma_t]=\rho t$. 
%By Proposition VI.6.37 of \cite{JS} and Lemma B.1 of \cite{arXiv:1412.6231}, it is sufficient \kengo{It may not be suffice. I will fill the gap.} to show the convergence of 

The proof strategy of Theorem \ref{theo:bps_ll} is as follows. 
The first step is to show the convergence of $Y^{\bouncy, d}$ at refreshment times $(\sigma_n)_{n\ge 0}$. For that purpose, we consider a pure step Markov process  $\overline{Y}^{\bouncy,d}$ defined by
$$
\overline{Y}_t^{\bouncy,d}:= \sum_{n\ge 0}Y_{\sigma_n/d}^{\bouncy,d}~1_{\left[\frac{\sigma_n}{d},\frac{\sigma_{n+1}}{d}\right)}(t)
= \sum_{n\ge 0}d^{1/2}\left(\frac{\|\xi_{\sigma_n}^{\bouncy,d}\|^2}{d}-1\right)~1_{\left[\frac{\sigma_n}{d},\frac{\sigma_{n+1}}{d}\right)}(t).
$$
The pure step Markov process has a  simpler structure which is characterised by the so-called finite transition measure. 
Since $\sigma_j/d-\sigma_{j-1}/d$ follows the exponential distribution with mean $1/\rho d$, its finite transition measure $K^d(x,\dif y)$ is 
$$
\int_\mathbb{R} f(y)K^d(x,\dif y)=\rho~d~\mathbb{E}[f(Y^{\bouncy,d}_{\sigma_1/d}-Y^{\bouncy,d}_{0})|Y^{\bouncy,d}_0=x]
$$
in the sense of IX.4.19 of \cite{JS}. Then we will apply Theorem IX.4.21 of \cite{JS} to the Markov process $\overline{Y}^{\bouncy,d}$ in Lemma \ref{lem:bouncy_bar}. To apply the theorem, key step is the proof for the convergence of the semimartingale characteristics. For this step, Stein's techniques work efficiently. 
After the proof of Lemma \ref{lem:bouncy_bar}, finally we will show that the difference between 
$\overline{Y}^{\bouncy,d}$ and  $Y^{\bouncy, d}$ is ignorable. 

\begin{lem}\label{lem:bouncy_bar}
	The process $\overline{Y}^{\bouncy,d}$ converges in law to $Y^{\bouncy}$. 
\end{lem}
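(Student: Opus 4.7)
The plan is to invoke Theorem IX.4.21 of \cite{JS}, which yields convergence of a sequence of pure step Markov processes to a diffusion through convergence of the first and modified second semimartingale characteristics together with the vanishing of large jumps. The process $\overline Y^{\bouncy,d}$ is a pure step Markov chain with transition kernel $\int f(y)K^d(x,\dif y)=\rho d\,\mathbb E[f(Y^{\bouncy,d}_{\sigma_1/d})\mid Y^{\bouncy,d}_0=x]$, and by~\eqref{eq:tildestd} together with $\dif\|\xi_t^{\bouncy,d}\|^2=2S_t^{\bouncy,d}\dif t$, the single jump on $[0,\sigma_1/d]$ equals $\Delta Y:=Y^{\bouncy,d}_{\sigma_1/d}-Y^{\bouncy,d}_0=(2/\sqrt d)\int_0^{\sigma_1}S_s^{\bouncy,d}\dif s$. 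The key structural input is that between refreshments $S^{\bouncy,d}$ has the same Markov semigroup as $\mathcal T$, exactly as in the calculation recorded immediately after Theorem~\ref{theo:bps_limit}; write $g_s(y):=\mathbb E[\mathcal T_s\mid\mathcal T_0=y]$ and $G_\rho(y):=\int_0^\infty e^{-\rho s}g_s(y)\,\dif s$. Spherical symmetry of the refreshment measure also forces $S_0^{\bouncy,d}=\alpha^d U^d$ with $\alpha^d=\sqrt{1+x/\sqrt d}$ and $U^d$ as in~\eqref{eq:ud} whenever $Y_0^{\bouncy,d}=x$.

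The second characteristic is handled cleanly. Since $\sigma_1$ is independent of the BPS trajectory and, under stationarity of $\mathcal T$, $\mathbb E[\mathcal T_s\mathcal T_t]=K(|s-t|,0)$, a short Fubini argument combined with Proposition~\ref{prop:diaconis} (applied to $q_{s,t}(y):=\mathbb E[\mathcal T_s\mathcal T_t\mid \mathcal T_0=y]$) gives
$$\rho d\,\mathbb E[(\Delta Y)^2\mid Y_0=x]\longrightarrow 4\rho\int_0^\infty\int_0^\infty e^{-\rho(s\vee t)}K(|s-t|,0)\,\dif s\,\dif t=8\int_0^\infty e^{-\rho r}K(r,0)\,\dif r=\sigma(\rho)^2,$$
uniformly in $x$ on compacts, matching the diffusion coefficient of $Y^{\bouncy}$.

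The drift is the heart of the proof. Conditioning as above and using the Markov property of $S^{\bouncy,d}$ between refreshments yields $\rho d\,\mathbb E[\Delta Y\mid Y_0=x]=2\rho\sqrt d\,\mathbb E[G_\rho(\alpha^d U^d)]$. Because $\alpha^d=1+x/(2\sqrt d)+O(d^{-1})$, Taylor expansion of $G_\rho$ together with Proposition~\ref{prop:diaconis} gives
$$\mathbb E[G_\rho(\alpha^d U^d)]=\mathbb E[G_\rho(W)]+\frac{x}{2\sqrt d}\,\mathbb E[W\,G_\rho'(W)]+O(d^{-1}).$$
The leading term vanishes because $\mathbb E[G_\rho(W)]=\int_0^\infty e^{-\rho s}\mathbb E[\mathcal T_s]\,\dif s=0$ by stationarity of $\mathcal T$, and Stein's identity~\eqref{eq:stein} rewrites $\mathbb E[W\,G_\rho'(W)]=\mathbb E[G_\rho''(W)]$. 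What remains is the identity
$$\rho\,\mathbb E[G_\rho''(W)]=-2\int_0^\infty e^{-\rho s}K(s,0)\,\dif s,$$
which I would establish by combining the resolvent equation $\rho G_\rho-GG_\rho=\mathrm{id}$ (for $G$ as in~\eqref{eq:generator_t} acting on the identity function), further applications of Stein's identity, and the integration-by-parts formula~\eqref{eq:ibp}. After multiplication by $x$ this produces precisely $-(\sigma(\rho)^2/4)x$, the drift of $Y^{\bouncy}$; pinning this identity down cleanly is the main obstacle.

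The remaining hypotheses of Theorem~IX.4.21 are routine. Uniform bounds on the fourth moments of $\mathcal T$ and $\sigma_1$ give $\rho d\,\mathbb E[|\Delta Y|^2\mathbf 1_{\{|\Delta Y|>\varepsilon\}}]\le\rho d\,\varepsilon^{-2}\mathbb E[(\Delta Y)^4]=O(d^{-1})$, so the large-jump part of $\nu^d$ vanishes; tightness of $\overline Y^{\bouncy,d}$ follows by the strong majoration argument already used in the proof of Theorem~\ref{theo:bps_limit}; and uniqueness of the martingale problem for the stationary Ornstein-Uhlenbeck SDE is classical.
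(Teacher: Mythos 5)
Your overall architecture coincides with the paper's: apply Theorem IX.4.21 of \cite{JS} to the pure step process, identify the limiting drift and diffusion coefficients via the representation $\Delta Y=(2/\sqrt d)\int_0^{\sigma_1}S^{\bouncy,d}_s\,\dif s$, use the fact that $S^{\bouncy,d}$ evolves as $\mathcal T$ between refreshments with $S_0^{\bouncy,d}\overset{d}{=}\alpha^dU^d$, and kill the large jumps by a fourth-moment bound of order $O(d^{-1})$. Your treatment of the second characteristic and of condition (ii) matches the paper's. However, there is a genuine gap exactly where you flag it: the drift computation is not completed. You reduce everything to the identity $\rho\,\mathbb{E}[G_\rho''(W)]=-2\int_0^\infty e^{-\rho s}K(s,0)\,\dif s$ and then only gesture at a resolvent/Stein/integration-by-parts strategy without carrying it out; since the drift is precisely what produces the mean-reversion coefficient $-\sigma(\rho)^2/4$, the lemma is not proved as written. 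A secondary problem is that your route requires Taylor expanding $G_\rho$ to second order and controlling the remainder uniformly, i.e.\ it presupposes that $x\mapsto\mathbb{E}[\mathcal T_t\mid\mathcal T_0=x]$ is twice differentiable with suitably integrable derivatives; the generator $G$ involves $x^+$, so this regularity is not free and you do not establish it.

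The paper closes the drift computation without ever differentiating $h_t(x)=\mathbb{E}[\mathcal T_t\mid\mathcal T_0=x]$ in $x$. It takes the Stein solution $f_t$ of $f_t'(x)-xf_t(x)=h_t(x)$ and uses the \emph{exact} identity
\begin{align*}
\mathbb{E}\bigl[h_t(\alpha W)\bigr]=\mathbb{E}\bigl[f_t'(\alpha W)\bigr]-\alpha^2\,\mathbb{E}\bigl[f_t'(\alpha W)\bigr]=\bigl(1-\alpha^2\bigr)\,\mathbb{E}\bigl[f_t'(\alpha W)\bigr],
\end{align*}
with $1-(\alpha^d)^2=-d^{-1/2}y$ exact, so the factor $d^{1/2}$ cancels algebraically rather than via a Taylor remainder. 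It then converts $\int f_t'\phi\,\dif x$ into $-\tfrac12\mathbb{E}[\mathcal T_0^2\mathcal T_t]$ by the integration-by-parts formula~\eqref{eq:ibp}, and evaluates that expectation as $\int_0^tK(s,0)\,\dif s$ using the pathwise identity $\mathcal T_t^2-\mathcal T_0^2=2\int_0^t\mathcal T_s\,\dif s$, the exponential decay of $K$, and $\int_0^\infty K(s,0)\,\dif s=0$. If you want to salvage your version, the cleanest repair is to abandon the Taylor expansion and substitute this exact Stein-equation manipulation; it simultaneously removes the regularity issue and proves the identity you left open.
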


\begin{proof}
We can construct $(S^{\bouncy,d}_t)_{t\in [0,\sigma_1)}$ so that 
\begin{equation}\label{eq:law_equivalence}
	S_t^{\bouncy,d}=
%	\left\{
%	\begin{array}{ll}
		\mathcal{T}_t%&
		\ (0\le t<\sigma_1)
%		W_1^d &t=\sigma_1. 
%	\end{array}
%	\right. 
\end{equation}
where $\mathcal{T}$ follows (\ref{eq:limitzigzag}) with $\mathcal{T}_0=S_0^{\bouncy,d}=x$, and independent from the refreshment times $(\sigma_n)_{n\ge 0}$.  
We apply Theorem IX.4.21 of \cite{JS}. Since the limiting process is the Ornstein-Uhlenbeck process, hypothesis \cite[IX.4.3]{JS} is satisfied. By the central limit theorem, $\mathcal{L}(\overline{Y}_0^{\bouncy,d})$ converges to $\mathcal N(0,2)=\mathcal{L}(Y_0^\bouncy)$, and hence condition (iii) is also satisfied. Therefore, it is sufficient to prove conditions (i) and (ii). 

The condition (i) corresponds to the (locally uniformly in $y$) convergence of
\begin{align*}
	b'^d(y)&:=\rho~d~\mathbb{E}[Y_{\sigma_1/d}^{\bouncy,d}-Y_0^d|Y_0^{\bouncy,d}=y] \quad \text{and}\\ 
	\tilde{c}'^d(y)&:=\rho~d~\mathbb{E}[(Y_{\sigma_1/d}^{\bouncy,d}-Y_0^d)^2|Y_0^{\bouncy,d}=y]. 
\end{align*}
For simplicity, we will denote $\mathbb{E}[~\cdot~|Y_0^{\bouncy,d}=y]$
by $\mathbb{E}_{y}[ \cdot ]$. 
Firstly, we check the convergence of the drift coefficient $b'^d$. 
Since $\dif\|\xi_t^{\bouncy,d}\|^2=2 S_t^{\bouncy,d}\dif t$, we have
\begin{align*}
\|\xi_{\sigma_1}^{\bouncy,d}\|^2-\|\xi_{0}^{\bouncy,d}\|^2
	=2\int_0^{\sigma_1}S^{\bouncy,d}_t\dif t	=2\int_0^{\sigma_1}\mathcal{T}_t\dif t=2\int_0^\infty1_{\{t\le \sigma_1\}}\mathcal{T}_{t}\dif t. 
\end{align*}
Since $\sigma_1$ and $\mathcal{T}$ are independent, we can rewrite $b'^d(y)$ as 
\begin{align*}
	b'^d(y)	&=\rho~d^{1/2}\mathbb{E}_y\left[\|\xi_{\sigma_1}^{\bouncy,d}\|^2-\|\xi_{0}^{\bouncy,d}\|^2\right]\\
	&=2\rho~d^{1/2}\int_0^\infty \mathbb{P}_y(t\le\sigma_1)~\mathbb{E}_y\left[\mathbb{E}[\mathcal{T}_t|\mathcal{T}_0=S^{\bouncy,d}_0]\right]\dif t\\
	&=2\rho~d^{1/2}\int_0^{\infty}e^{-\rho t}\mathbb{E}_y\left[h_t(S^{\bouncy,d}_0)\right]\dif t
\end{align*}
where $h_t(x):=\mathbb{E}[\mathcal{T}_t|\mathcal{T}_0=x]$. Now we are going to approximate $S_0^{\bouncy,d}$ by a Gaussian random variable.   
For $\alpha>0$, by (\ref{eq:square_t}), we have
$$
|h_t(\alpha x)|= \left|\mathbb{E}[\mathcal{T}_t|\mathcal{T}_0=\alpha x]\right|\le 
\mathbb{E}\left[\left|\mathcal{T}_t\right||\mathcal{T}_0=\alpha x\right]\le |\alpha x|+t\le (|\alpha|+t)(1+|x|). 
$$
%Observe that, 
%by (\ref{eq:square_t}), $|h_t(x)|\le |x|+t$, and by stationarity of the process $(\mathcal{T}_t)_{\ge 0}$, we have
%$$
%\int h_t(x)\phi(x)\dif x =0. 
%$$
Conditioned on $y$, we show that the difference of the law of  $S_0^{\bouncy, d}$ 
and the normal distribution $\mathcal N(0,(\alpha^d)^2)$ is small, where  %\joris{I have equipped $\alpha^d$ with a subscript $y$}
$$
(\alpha^d)^2 :=\frac{\|\xi_0^{\bouncy,d}\|^2}{d}=1+d^{-1/2}y. 
$$
By the property of $\psi_d$,  we can rewrite the expectation of $S_0^{\bouncy, d}$ in terms of $U^d$ (see~\eqref{eq:ud}) since 
$\mathcal{L}_y(S^{\bouncy,d}_0)=\mathcal{L}_y(\alpha^d~U^d)$
as in (\ref{eq:law_of_s})
where $\mathcal{L}_y$ is the conditional distribution given $Y_0^{\bouncy,d}=y$. 
Therefore, we can apply Proposition \ref{prop:diaconis} with $k=1$ and $\epsilon\in (0,1/2)$ to $S_0^{\bouncy, d}$. We have 
\begin{align}\label{eq:hw_bound}
\left|\mathbb{E}_y[h_{t}(S^{\bouncy,d}_0)]
-
\mathbb{E}_y[h_t(\alpha^d W)]\right|
\le (|\alpha^d|+t)~O(d^{\epsilon-1}),
\end{align}
where $W\sim \mathcal{N}(0,1)$. 
Since $\alpha^d \rightarrow 1$ locally uniformly in $y$, we obtain that the drift coefficient is an expectation of Gaussian random variable with ignorable approximation error: 
\begin{align*}
	b'^d(y)
	&=2\rho~d^{1/2}\int_0^{\infty}e^{-\rho t}\mathbb{E}_y\left[h_t(\alpha^dW)\right]\dif t+O(d^{\epsilon-1/2}).  
\end{align*}
We are in a position to apply Stein's method. 
Let $f_t$ be the Stein's solution for $Lf_t=h_t$. 
Observe that $\mathbb{E}[h_t(W)]=\mathbb{E}[h_t(\mathcal{T}_0)]=\mathbb{E}[\mathcal{T}_t]=0$. 
By Lemma   \ref{lem:stein_bound}, $f_t$ and $f_t'=xf_t+h_t$ are $\mathcal{N}(0,1)$-integrable. Therefore, 
\begin{align*}
\mathbb{E}_y[h_t(\alpha^d W)]
&=	\mathbb{E}_y[f_t'(\alpha^d W)-\alpha^d W f_t(\alpha^d W)]\\
&=	\mathbb{E}_y[f_t'(\alpha^d W)-(\alpha^d)^2~ f_t'(\alpha^d W)]\\
&=	(1-(\alpha^d)^2)\mathbb{E}_y[f_t'(\alpha^d W)]\\
&=	-d^{-1/2}y~\mathbb{E}_y[f_t'(\alpha^d W)]
\end{align*}
where we used Stein's identity in the third line. Since $\alpha^d\longrightarrow_{d\rightarrow\infty} 1$ locally uniformly in $y$, by the dominated convergence theorem, we have
$$
b'^d(y)\longrightarrow_{d\rightarrow\infty} b'(y):=-2\rho y\int_0^\infty\int_\mathbb{R} e^{-\rho t}f'_t(x)\phi(x)\dif x\dif t. 
$$
To finish the calculation of the drift coefficient, we rewrite the expectation in the right hand side without using Stein's solution. 
By Stein's identity together with (\ref{eq:ibp}), 
\begin{align*}
	\int f_t'(x)\phi(x)\dif x&=
	\int xf_t(x)\phi(x)\dif x\\
	&=\int \left(\frac{x^2}{2}\right)'f_t(x)\phi(x)\dif x\\
	&=-\int \frac{x^2}{2}~h_t(x)\phi(x)\dif x\\
	&=-\mathbb{E}\left[\left(\frac{\mathcal{T}_0^2}{2}\right)~\mathbb{E}[\mathcal{T}_t|\mathcal{T}_0]\right]=-2^{-1}\mathbb{E}[\mathcal{T}_0^2\mathcal{T}_t].
\end{align*}
We used Stein's identity in the first line, and the integration by parts formula (\ref{eq:ibp}) with $f_t=Lh_t$ and $g(x)=x^2/2$ in the third line. 
We can rewrite this expectation as an integration with respect to the covariance function $K(s,t)$. 
By (\ref{eq:convergence_of_h}) with $k=2$, the right-hand side of the above equation equals 
\begin{align*}
2^{-1}\lim_{s\rightarrow\infty}
\mathbb{E}[(\mathcal{T}_s^2-\mathcal{T}_0^2)\mathcal{T}_t]
=\mathbb{E}\left[\int_0^\infty\mathcal{T}_s\mathcal{T}_t\dif s\right]
	=\int_0^\infty K(s,t)\dif s=\int_0^t K(s,0)\dif s
\end{align*}
where we used (\ref{eq:square_t}) in the first equation, and (\ref{eq:integral_k}) for the last equation. Therefore we obtain the expression of the drift coefficient:
\begin{align*}
	b'(y)
	=-2\rho y\int_0^\infty e^{-\rho t}\int_0^t K(s,0)\dif s \dif t
	=-2y\int_0^\infty e^{-\rho s} K(s,0)\dif s. 
\end{align*}

Secondly, we check convergence of the diffusion coefficient. By  $\dif \|\xi_t^{\bouncy,d}\|^2=2 S_t^{\bouncy,d}\dif t$,
\begin{align*}
	\tilde{c}'^d(y)&=\rho~\mathbb{E}_y\left[(\|\xi_{\sigma_1}^{\bouncy,d}\|^2-\|\xi_{0}^{\bouncy,d}\|^2)^2\right]\\
&=4\rho~\mathbb{E}_y\left[\left\{\int_0^{\sigma_1}S^{\bouncy,d}_t\dif t\right\}^2\right]=4\rho~\mathbb{E}_y\left[\left\{\int_0^{\sigma_1}\mathcal{T}_t\dif t\right\}^2\right].
\end{align*}
As in the drift coefficient case, since $\sigma_1$ and $\mathcal{T}_t$ are independent, we have
\begin{align*}
\mathbb{E}\left[\left.\left\{\int_0^{\sigma_1}\mathcal{T}_t\dif t\right\}^2\right|\mathcal{T}_0=S_0^{\bouncy,d}\right]
	&=
	\int_0^\infty\int_0^\infty\mathbb{E}\left[\left.1_{\{s, t\le\sigma_1\}}\mathcal{T}_t\mathcal{T}_s\right|\mathcal{T}_0=S_0^{\bouncy,d}\right]\dif t~\dif s\\
	&=
	\int_0^\infty\int_0^\infty e^{-\rho\max\{s,t\}}h_{s,t}(S_0^{\bouncy,d})\dif t~\dif s
\end{align*}
where $h_{s,t}(x)=\mathbb{E}\left[\left.\mathcal{T}_t\mathcal{T}_s\right|\mathcal{T}_0=x\right]$. 
Observe that if $t\ge s\ge 0$, by (\ref{eq:square_t}), we have
\begin{align*}
	|h_{s,t}(\alpha x)|=\left|\mathbb{E}\left[\left.\mathcal{T}_t\mathcal{T}_s\right|\mathcal{T}_0=\alpha x\right]\right|
	\le (|\alpha x|+t)(|\alpha x|+s)\le (|\alpha|+t)^2(1+|x|)^2. 
\end{align*}
Therefore by Proposition \ref{prop:diaconis} with $k=2$, we can approximate the expectation of $S_0^{\bouncy,d}$ by that of a Gaussian random variable: 
$$
|\mathbb{E}[h_{s,t}(S_0^{\bouncy, d})]-\mathbb{E}[h_{s,t}(\alpha^d \mathcal{T}_0)]|
\le (|\alpha^d|+\max\{s,t\})^2O(d^{\epsilon-1})
$$
for any $\epsilon\in (0,1)$. 
Therefore, we can conclude 
\begin{align*}
	\tilde{c}'^d(y)
	& = 4\rho~
	\int_0^\infty\int_0^\infty e^{-\rho\max\{s,t\}}\mathbb{E}_y\left[h_{s,t}(\alpha^d W)\right]\dif s\dif t+O(d^{\epsilon-1}). 
\end{align*}
Hence by the dominated convergence theorem, we have
\begin{align*}
	\tilde{c}'^d(y)\longrightarrow_{d\rightarrow\infty} 
4\rho~\int_0^\infty\int_0^\infty e^{-\rho\max\{s,t\}}K(s,t)\dif s \dif t=:\tilde{c}'(y), 
\end{align*}
since $\mathbb{E}[h_{s,t}(W)]=\mathbb{E}[h_{s,t}(\mathcal{T}_0)]=\mathbb{E}[\mathcal{T}_t\mathcal{T}_s]=K(s,t)$. 
By change of variable $(s,t)\mapsto(t-s,t)=:(u,t)$, we have
\begin{align*}
	\tilde{c}'(y)&=8\rho\int_{0<s\le t<\infty}e^{-\rho t}K(s,t)\,\dif s\, \dif t\\
	&=8\rho\int_0^\infty\int_u^\infty e^{-\rho t}K(u,0)\, \dif t\, \dif u\\
	&=8\int_0^\infty e^{-\rho u}K(u,0)\, \dif u. 
\end{align*}
Therefore, the condition (i) of Theorem IX.4.21 of \cite{JS} follows. 

Finally, we check condition (ii). By Markov property, for any $\epsilon>0$, we have
\begin{align*}
	\int_\mathbb{R} K^d(x,\dif y)|y|^21_{\{|y|>\epsilon\}}
& \le 
	\epsilon^{-2}	\int_\mathbb{R} K^d(x,\dif y)|y|^4=:\epsilon^{-2}\delta^d(y). 
\end{align*}
By construction of $K^d$, we can rewrite $\delta^d(y)$ as
\begin{align*}
\delta^d(y)&=\rho~d~\mathbb{E}[(Y_{\sigma_1/d}^{\bouncy,d}-Y_0^{\bouncy,d})^4|Y_0^{\bouncy,d}=y]. 
\end{align*}
By H\"older's inequality,
\begin{align*}
	\delta^d(y)&=\rho d^{-1}~\mathbb{E}_y\left[(\|\xi_{\sigma_1}^{\bouncy,d}\|^2-\|\xi_{0}^{\bouncy,d}\|^2)^4\right]\\
	&=16\rho d^{-1}~\mathbb{E}_y\left[\left\{\int_0^{\sigma_1}S^{\bouncy,d}_t\dif t\right\}^4\right]\\
	&=16\rho d^{-1}~\mathbb{E}_y\left[\left\{\int_0^{\sigma_1}\mathcal{T}_t\dif t\right\}^4\right]\\
	&\le16\rho d^{-1}~\mathbb{E}_y\left[\sigma_1^4(|S^{\bouncy,d}_0|+\sigma_1)^4\right]\\
	&=O(d^{-1})
\end{align*}
locally uniformly in $y$ where we used (\ref{eq:square_t}) in the inequality. 
Therefore, the condition (ii) follows. 
Thus, the claim follows by Theorem IX.4.21 of \cite{JS}. 
\end{proof}

%\begin{lem}\label{lem:lem_y_convergence}
%	The process $(Y_t^{\bouncy,d})_{t\ge 0}$ converges in law to $Y^{\bouncy}$. 
%\end{lem}

\begin{proof}[Proof of Theorem \ref{theo:bps_ll}]
We showed that 
the process $\overline{Y}^{\bouncy,d}$ converges in law to $Y^{\bouncy}$. Therefore, 
by Lemma VI.3.31 of \cite{JS}, it is sufficient to show 
	$$
\epsilon^d_T:=\sup_{0\le t\le T}|Y_t^{\bouncy,d}-\overline{Y}_t^{\bouncy,d}|
\longrightarrow_{d\rightarrow \infty} ~0
	$$
in probability for any $T>0$. Let 
$$
A_T=(0,T]\times \mathbb{R} \quad \text{and} \quad \lambda_T=\rho T.
$$ 
Then $R_d(A_T)$ follows the Poisson distribution with mean $\lambda_T$. In particular,  $R_d(A_{dT})/d$ is tight. 
Since $R_d(A_T)$ is the number of the refreshment jumps until $T>0$,  we have
\begin{align*}
	\epsilon^d_T& \le\sup_{j\le R_d(A_{dT})}\sup_{\sigma_j\le dt< \sigma_{j+1}}|Y_t^{\bouncy,d}-Y_{\sigma_j/d}^{\bouncy,d}|
\end{align*}
On the other hand, for $\sigma_j\le dt< \sigma_{j+1}$, we have
\begin{align*}
|Y_t^{\bouncy,d}-Y_{\sigma_j/d}^{\bouncy,d}|&=d^{-1/2}|\|\xi_{td}^{\bouncy,d}\|^2-\|\xi_{\sigma_j}^{\bouncy,d}\|^2|\\
& \le 2d^{-1/2}\int_{\sigma_j}^{\sigma_{j+1}}|S_t^{\bouncy,d}|\dif t\\
& \le 2 d^{-1/2}\int_{\sigma_j}^{\sigma_{j+1}}(|S_{\sigma_j}^{\bouncy,d}|+t)~\dif t\\
& = 2 d^{-1/2}\left(|S_{\sigma_j}^{\bouncy,d}|(\sigma_{j+1}-\sigma_j)+\frac{1}{2}(\sigma_{j+1}-\sigma_j)^2\right), 
\end{align*}
where we used (\ref{eq:square_t}) in the third line. 
Therefore, for any $J\in\mathbb{N}$,
\begin{align*}
	\mathbb{P}(\epsilon^d_T>\epsilon)& \le \mathbb{P}(R_d(A_{dT})>dJ)\\
	&\quad+\mathbb{P}\left(2
	d^{-1/2}\sup_{j\le dJ}\left(|S_{\sigma_j}^{\bouncy,d}|(\sigma_{j+1}-\sigma_j)+\frac{1}{2}(\sigma_{j+1}-\sigma_j)^2\right)>\epsilon\right)\\
& \le \mathbb{P}(R_d(A_{dT})>dJ)+dJ\mathbb{P}\left(2
	d^{-1/2}\left(|S_0^{\bouncy,d}|\sigma_1+\frac{1}{2}\sigma_1^2\right)>\epsilon\right). 
\end{align*}
If we take $J\in\mathbb{N}$ large enough, the first probability in the right-hand side of the above inequality can be small. 
The second term converges to $0$ by Markov's inequality together with the fact that 
$S_0^{\bouncy,d}\sim \mathcal{N}(0,1)$
and $\sigma_1$ follows the exponential distribution with mean $1/\rho$. Hence the claim follows. 
\end{proof}

\subsection{Proof for Proposition \ref{prop:limit_of_sigma}}

\begin{proof}[Proof for Proposition \ref{prop:limit_of_sigma}]
By Proposition \ref{prop:covariance} together with Lebesgue's dominated convergence theorem, the claim is obvious. 
\end{proof}

\subsection{Proof for Theorem \ref{theo:bps_coordinate}}

Firstly, we prove that the process $Z^{\bouncy, d,k}$ can be approximated by a pure step Markov process.  Secondly, we show that this approximated process converges to an Ornstein-Uhlenbeck process  which completes the proof of Theorem \ref{theo:bps_coordinate}. 

\subsubsection{Approximation of the process}

Let 
$$
\overline{Z}_t^{\bouncy,d,k}:= \sum_{n\ge 0}Z_{\sigma_n/d}^{\bouncy,d,k}~1_{\left[\frac{\sigma_n}{d},\frac{\sigma_{n+1}}{d}\right)}(t), 
$$	
be the pure step version of $Z^{\bouncy, d,k}$. 
By construction, we have the following decomposition imitating the Doob-Meyer decomposition 
\begin{equation}\label{eq:doob_meyer}
\Delta\overline{Z}_{\sigma_{n+1}/d}^{\bouncy,d,k}:=\overline{Z}_{\sigma_{n+1}/d}^{\bouncy,d,k}-\overline{Z}_{\sigma_{n}/d}^{\bouncy,d,k}=\int_{\sigma_n}^{\sigma_{n+1}}\pi_k(v^{\bouncy,d}_t)\dif t=M_{n+1}^d+A_{n+1}^d 
\end{equation}
where 
$$
M_{n+1}^d=\int_{\sigma_n}^{\sigma_{n+1}}\pi_k(v_{\sigma_n}^{\bouncy, d})~\dif t,\ 
A_{n+1}^d=\int_{\sigma_n}^{\sigma_{n+1}}\pi_k(v_t^{\bouncy,d}-v_{\sigma_n}^{\bouncy, d})~\dif t. 
$$

Now we want to extract a predictable component from $A_{n+1}^d$. 
Let $(\mathcal{F}_t^d)_{t\ge 0}$ be the underlying filtration. For $N\in\mathbb{N}$, we show the following. 

\begin{lem}\label{lem:component-1}
\[
\mathbb{E}\left[\sum_{i=0}^{dN-1}\|A_{i+1}^d\|^2\right]\longrightarrow 0. 
\]
\end{lem}

\begin{proof}
By stationarity assumption, each $A_n^d$ has the same law. Therefore it is sufficient to show that $d~\mathbb{E}[\|A_1^d\|^2]\longrightarrow 0$. For the spherical symmetricity of the process $v^{\bouncy,d}$, we have
\begin{align*}
	d\mathbb{E}[\|A_1^d\|^2]
&=d\mathbb{E}\left[\left\|\int_0^{\sigma_1}\pi_k(v_t^{\bouncy,d}-v_0^{\bouncy,d})\dif t\right\|^2\right]=k\mathbb{E}\left[\left\|\int_0^{\sigma_1}v_t^{\bouncy,d}-v_0^{\bouncy,d}\dif t\right\|^2\right]. 
\end{align*}
Since the stopping time $\sigma_1$ is independent from $\mathcal{F}_{\sigma_1-}^d$, by the dominated convergence theorem, it is sufficient to show that 
$\mathbb{E}[\|v_t^{\bouncy,d}-v_0^{\bouncy,d}\|^2]\longrightarrow 0$ for any $t>0$, where $v_t^{\bouncy, d}$ follows the stochastic differential equation defined in Section \ref{subsubsec:BPS} without refreshment jumps. 
We have
\begin{align*}
v_t^{\bouncy,d}-v_0^{\bouncy,d}=\int_{(0,t]\times\mathbb{R}_+}\psi(u,z)N(\dif u,\dif z),\ \psi(t,z):=-2S_{t-}^{\bouncy,d}\frac{\xi_{t-}^{\bouncy, d}}{\|\xi_{t-}^{\bouncy,d}\|^2}1_{\{z\le S_{t-}^{\bouncy,d}\}}. 
\end{align*}
Therefore by Theorem II.1.33 of \cite{JS}, 
\begin{align*}
\mathbb{E}[\|v_t^{\bouncy, d}-v_0^{\bouncy, d}\|^2]&=\mathbb{E}\left[
\int_{(0,t]\times\mathbb{R}_+}\|\psi(u,z)\|^2\dif u\dif z+\left\|\int_{(0,t]\times\mathbb{R}_+}\psi(u,z)\dif u\dif z\right\|^2\right]. 
\end{align*}
By stationarity of the process together with Fubini's theorem, we have a bound
\begin{align*}
\mathbb{E}[\|v_t^{\bouncy, d}-v_0^{\bouncy, d}\|^2]
\le 4\mathbb{E}\left[t\{(S_0^{\bouncy,d})^+\}^3\frac{1}{\|\xi_0^{\bouncy,d}\|^2}+t^2\{(S_0^{\bouncy,d})^+\}^4\frac{1}{\|\xi_0^{\bouncy,d}\|^2}\right]. 
\end{align*}
The variable $S_0^{\bouncy,d}$ follows the standard normal distribution, and $\|\xi_0^{\bouncy,d}\|^{-2}$ follows the inverse of the chi-squared distribution with $d$ degrees of freedom which is on the order of $d^{-1}$ by Lemma 4.1 of \cite{arXiv:1412.6231}.  
Thus, the expectation in the above has on the order of $d^{-1}$ by the Cauchy-Schwarz inequality. 
Thus $\mathbb{E}[\|A_{i+1}^d\|^2]$ is on the order of $d^{-2}$ which proves the claim. 
\end{proof}

\begin{cor}\label{cor:component-1}
$$
\sup_{n=1,\ldots, dN}\left\| \sum_{i=0}^{n-1}A_{i+1}^d-\mathbb{E}[A_{i+1}^d|\mathcal{F}_{\sigma_i-}^d]\right\|~\longrightarrow~_{d\rightarrow\infty}~0
$$
in probability. 
\end{cor}

\begin{proof}
	Consider a filtration $(\mathcal{F}_{\sigma_n-}^d)_n$. A discrete process $X$ is L-dominated by $Y$ in the sense of I.3.29 of \cite{JS}, that is, 
$\mathbb{E}[|X_\tau|]\le \mathbb{E}[|Y_\tau|]$ for any bounded $(\mathcal{F}_{\sigma_n-}^d)_n$-stopping time $\tau$ where
\begin{align*}
X_n:=\left\| \sum_{i=0}^{n-1}A_{i+1}^d-\mathbb{E}[A_{i+1}^d|\mathcal{F}_{\sigma_i-}^d]\right\|^2,\ Y_n:= \sum_{i=0}^{n-1}\left\|A_{i+1}^d\right\|^2. 
\end{align*}
Then, by Lenglart's inequality (I.3.30 of \cite{JS}), we have
\[
\mathbb{P}(\sup_{n\le dN}X_n\ge \epsilon)\le \frac{\eta}{\epsilon}+\mathbb{P}(Y_{dN}\ge \eta)
\]
for $\epsilon, \eta>0$. Therefore, the convergence of $\sup_{n\le dN} X_n$ comes from Lemma \ref{lem:component-1}. 
\end{proof}

Now we show that a predictable component 
$\mathbb{E}[A_{n+1}^d|\mathcal{F}_{\sigma_n-}^d]$ has a simpler expression $\mathbb{E}[B_{n+1}^d|\mathcal{F}_{\sigma_n-}^d]$ where 
$$
B_{n+1}^d=-\int_{\sigma_n}^{\sigma_{n+1}}\int_0^{t}\frac{1}{d}\pi_k(\xi_{\sigma_n}^{\bouncy,d})~\dif s\dif t
$$
up to negligible term. Note that
$$
\mathbb{E}[B_{n+1}^d|\mathcal{F}_{\sigma_n-}^d]=-\frac{\rho^{-2}}{d}\pi_k(\xi_{\sigma_n}^{\bouncy,d}). 
$$

\begin{lem}\label{lem:component-2}
$$
\sup_{n=1,\ldots, dN}\left\| \sum_{i=0}^{n-1}\mathbb{E}[A_{i+1}^d|\mathcal{F}_{\sigma_i-}^d]-\mathbb{E}[B_{i+1}^d|\mathcal{F}_{\sigma_i-}^d]\right\|~\longrightarrow~_{d\rightarrow\infty}~0
$$
in probability. 
\end{lem}

\begin{proof}
By stationarity of the process together with the Cauchy-Schwarz inequality, it is sufficient to show 
$d~\mathbb{E}[\|\mathbb{E}[A_1^d-B_1^d|\mathcal{F}_{0-}^d]\|^2]^{1/2}\longrightarrow 0$. 
By spherical symmetricity of the processes, we have
\begin{align*}
\mathbb{E}[\|\mathbb{E}[A_1^d-B_1^d|\mathcal{F}_{0-}^d]\|^2]
&=\frac{k}{d}\mathbb{E}\left[\left.\left\|\mathbb{E}\left[\int_0^{\sigma_1}\int_{(0,t]\times\mathbb{R}_+}\psi(s,z)N(\dif s,\dif z)\dif t\right|\mathcal{F}_{0-}^d\right]
\right\|^2\right]\\
&=\frac{k}{d}\mathbb{E}\left[\left.\left\|\mathbb{E}\left[\int_0^{\sigma_1}\int_0^t\psi(s)\dif s\dif t\right|\mathcal{F}_{0-}^d\right]
\right\|^2\right]
\end{align*}
where
\begin{align*}
\psi(t,z)&=-2S_{t-}^{\bouncy,d}\frac{\xi_{t-}^{\bouncy,d}}{\|\xi_{t-}^{\bouncy,d}\|^2}
1_{\{z\le S_{t-}^{\bouncy,d}\}}+\frac{\xi_{t-}^{\bouncy, d}}{d},\\ 
\psi(t)&=-2\left\{(S_{t}^{\bouncy,d})^+\right\}^2\frac{\xi_{t}^{\bouncy,d}}{\|\xi_{t}^{\bouncy,d}\|^2}
+\frac{\xi_{t}^{\bouncy, d}}{d}
\end{align*}
By the Cauchy-Schwarz inequality together with dominated convergence theorem, it is sufficient to prove 
$d\mathbb{E}[\|\mathbb{E}[\int_0^t\psi(s)\dif s|\mathcal{F}_{0-}^{\bouncy, d}]\|^2]\longrightarrow 0$ where $\xi_t^{\bouncy, d}$ and $v_t^{\bouncy, d}$ follow the stochastic differential equation defined in Section \ref{subsubsec:BPS} without refreshment jumps. Let
\begin{align*}
	\psi_1(t)&=-2\left\{(S_{t}^{\bouncy,d})^+\right\}^2\left\{\frac{\xi_{t}^{\bouncy,d}}{\|\xi_{t}^{\bouncy,d}\|^2}-\frac{\xi_{t}^{\bouncy,d}}{d}\right\},\\
\psi_2(t)&=-\left\{2\left\{(S_{t}^{\bouncy,d})^+\right\}^2-1\right\}\frac{\xi_{t}^{\bouncy,d}-\xi_0^{\bouncy, d}}{d},\\ 
\psi_3(t)&=-\left\{2\left\{(S_{t}^{\bouncy,d})^+\right\}^2-1\right\}\frac{\xi_{0}^{\bouncy,d}}{d},\ 
\end{align*}
so that $\psi(t)=\psi_1(t)+\psi_2(t)+\psi_3(t)$. 
Convergence of $d\mathbb{E}[\|\psi_1(t)\|^2]=d\mathbb{E}[\|\psi_1(0)\|^2]$
 follows from the Cauchy-Schwartz inequality as in the proof of Lemma \ref{lem:component-1}. Convergence of $d\mathbb{E}[\|\psi_1(t)\|^2]$
also follows by the Cauchy-Schwartz inequality together with 
the uniform bound $\|\xi_t^{\bouncy, d}-\xi_0^{\bouncy, d}\|\le t$.  
 Therefore the proof will be completed if we show $d\mathbb{E}[\|\mathbb{E}[\int_0^t\psi_3(s)\dif s|\mathcal{F}_{0-}^{\bouncy, d}]\|^2]\longrightarrow 0$

By (\ref{eq:tildestd}), up to the refreshment time, we have
\[
S_t^{\bouncy,d}=S_0^{\bouncy,d}+t-2\int_{(0,t]\times\mathbb{R}}S_{s-}^{\bouncy,d}1_{\{z\le S_{s-}^{\bouncy, d}\}}N(\dif s,\dif z). 
\]
By this fact, 
\begin{align*}
	\mathbb{E}\left[\int_0^t\psi_3(s)\dif s|\mathcal{F}_{0-}^{\bouncy, d}\right]&=
	\mathbb{E}\left[\left.t-2\int_{(0,t]\times\mathbb{R}}S_{s-}^{\bouncy,d}1_{\{z\le S_{s-}^{\bouncy, d}\}}N(\dif s,\dif z)\right|\mathcal{F}_{0-}^d\right]\frac{\xi_0^{\bouncy,d}}{d}\\
&=
	\mathbb{E}\left[\left.S_t^{\bouncy, d}-S_0^{\bouncy, d}\right|\mathcal{F}_{0-}^d\right]\frac{\xi_0^{\bouncy,d}}{d}\\
&=
	\mathbb{E}\left[\left.h_t(S_0^{\bouncy, d})-S_0^{\bouncy, d}\right|\mathcal{F}_{0-}^d\right]\frac{\xi_0^{\bouncy,d}}{d}
\end{align*}
where $h_t(x)=\mathbb{E}[\mathcal{T}_t|\mathcal{T}_0=x]$.  
Let $\mathcal{L}_{0-}(X)$ be the  distribution of $X$ conditioned on $\mathcal{F}_{0-}^d$. Since the initial velocity 
is independent from the initial state, 
we have $\mathcal{L}_{0-}(S_0^{\bouncy, d})=\mathcal{L}_{0-}(\alpha^d U^d)$ 
as in (\ref{eq:law_of_s}) %\joris{but does this hold conditionally on $z$?}
where $U^d$ is defined in (\ref{eq:ud}) and 
 $(\alpha^d)^2=\|\xi_{0}^{\bouncy, d}\|^2/d$. 
In particular, $\mathbb{E}[S_0^{\bouncy, d}|\mathcal{F}_{0-}^d]=0$. 
Moreover, by (\ref{eq:hw_bound}), we can substitute 
$h_t(S_0^{\bouncy,d})$ by $h_t(\alpha^dW)$
 where $W$ follows the standard normal distribution. Finally the claim follows by the dominated convergence theorem since 
 $\alpha^d\rightarrow 1$ and $\mathbb{E}[h_t(W)|\mathcal{F}_{0-}^d]=0$. Therefore, $d\mathbb{E}[\|\mathbb{E}[\int_0^t\psi_3(s)\dif s|\mathcal{F}_{0-}^{\bouncy, d}]\|^2]\longrightarrow 0$ which proves the claim. 
\end{proof}

\begin{lem}\label{lem:component-3}
$$
\sup_{n=1,\ldots, dN}\left\| \int_0^{\sigma_n/d}b'(\overline{Z}_t^{\bouncy,d,k})\dif t-\sum_{i=0}^{n-1}\mathbb{E}[B_{i+1}^d|\mathcal{F}_{\sigma_i-}^d]\right\|~\longrightarrow~_{d\rightarrow\infty}~0
$$
in probability, where $b'(x)=-\rho^{-1} x$. 
\end{lem}

\begin{proof}
Since the difference in the norm is 
\begin{align*}
	-\sum_{i=0}^{N-1}(\sigma_{i+1}-\sigma_i-\rho^{-1})\frac{\rho^{-1}}{d}\pi_k(\xi_{\sigma_i}^{\bouncy,d})
\end{align*}
and it is a martingale. Therefore the claim follows from Doob's inequality (I.1.43 of \cite{JS}). 
\end{proof}

Since $\overline{Z}^{\bouncy,d,k}$ is a pure step process, the semimartingale characteristics are entirely described by a random measure as described in Theorem II.3.11(b) of \cite{JS} (See also Proposition II.2.17). 
Therefore, we have the first and modified second characteristics as follows:
\begin{align*}
	B'^d_T&=\sum_{n:\sigma_n\le T}\mathbb{E}[\Delta\overline{Z}_{\sigma_{n}/d}^{\bouncy,d,k}|\mathcal{F}_{\sigma_{n-1}-}],\\ 
	\tilde{C}'^d_T&=\sum_{n:\sigma_n\le T}\mathbb{E}[(\Delta\overline{Z}_{\sigma_{n}/d}^{\bouncy,d,k})^{\otimes 2}|\mathcal{F}_{\sigma_{n-1}-}]-\mathbb{E}[(\Delta\overline{Z}_{\sigma_{n}/d}^{\bouncy,d,k})|\mathcal{F}_{\sigma_{n-1}-}]^{\otimes 2}. 
\end{align*}
Also the corresponding random measure is
\begin{align*}
g*\nu_T^d=\sum_{n:\sigma_n\le T}\mathbb{E}[g(\Delta\overline{Z}_{\sigma_{n}/d}^{\bouncy,d,k})|\mathcal{F}_{\sigma_{n-1}-}].
\end{align*}
for a bounded smooth function $g(x)$. 
Here, for a vector $v=(v_1,\ldots, v_k)\in\mathbb{R}^d$, 
$v^{\otimes 2}$ is a $k\times k$ matrix with $(i,j)$-th element $v_iv_j$. 

\begin{lem}
		The process $\overline{Z}^{\bouncy,d,k}$ converges in law to $Z^{\bouncy,k}$. 
\end{lem}

\begin{proof}
The first and the modified second characteristics of $Z^{\bouncy, k}$ are 
$$
B'_T=\int_0^Tb'(Z^{\bouncy, k}_t)\dif t,\ 
\tilde{C}'_T=2T\rho^{-1}. 
$$
	We apply Theorem IX.3.48 of \cite{JS}. Conditions (i-iv) is obvious since the limit is the Ornstein-Uhlenbeck process. The condition (v) is also clear since in this case, both $\eta^d$ and $\eta$ are the $k$-dimensional standard normal distribution. Therefore we only need to check four conditions in (vi). 
%	Note that to prove these conditions, it is sufficient to consider the process at refreshment times since integrals up to time $t\wedge S_a^d$ is bounded below and above by integrals up to refreshment times. 

Firstly we can assume that the number of refreshment jumps until time $T$, $R_d((0,T]\times\mathbb{R})$ is smaller than $dN$ for some $N\in\mathbb{N}$ by the argument of the proof of Theorem \ref{theo:bps_ll}. 
Let $\nu^d$ be the random measure corresponding to $\overline{Z}^{\bouncy,d,k}$.  
For $g\in C_1(\mathbb{R})$ (See VII.2.7 of \cite{JS}), we can assume that $|g(x)|\le 1$ for any $x$ and $g(x)=0$ for $|x|<b$ for some $b>0$.  Then 
\begin{align*}
g*\nu_T^d\le \sum_{n:\sigma_n\le T}\mathbb{P}(\|\Delta\overline{Z}_{\sigma_{n}/d}^{\bouncy,d,k}\|>b|\mathcal{F}_{\sigma_{n-1}-}).
\end{align*}
Therefore, it is sufficient to prove
$$
\sum_{i=1}^{dN}\mathbb{P}(\|\Delta\overline{Z}_{\sigma_{n}/d}^{\bouncy,d,k}\|>b)~\longrightarrow_{d\rightarrow\infty}~0. 
$$
for [$\delta_{\mathrm{loc}}$-D]. This is also a sufficient condition for 3.49 of Theorem IX.3.48. By equation (\ref{eq:doob_meyer}), we have 
$$
\|\Delta\overline{Z}_{\sigma_{n}/d}^{\bouncy,d,k}\|\le \|M_n^d\|+\|A_n^d\|. 
$$
The convergence of $A_n^d$ part directly follows from Lemma \ref{lem:component-1} with Chevyshev's inequality, and the convergence of $M_n^d$ part follows from Markov's inequality 
together with the fact that the square of each component of $v_{\sigma_n}^{\bouncy,d}$ follows the Beta distribution with parameter $1/2$ and $(d-1)/2$. Condition [Sup-$\beta'_{\mathrm{loc}}$] follows by Corollary \ref{cor:component-1}-\ref{lem:component-3}. Finally we check [$\gamma_{\mathrm{loc}}'$-D]. 

By the decomposition of $\Delta\overline{Z}^{\bouncy,d,k}$, we have
\begin{align*}
	\tilde{C}_T'^d&=\sum_{n;\sigma_n\le T}\mathbb{E}[(M_n^d)^{\otimes 2}+M_n^d\otimes (A_n^d-\mathbb{E}[A_n^d|\mathcal{F}_{\sigma_{n-1}-}])\\
	&\quad + (A_n^d-\mathbb{E}[A_n^d|\mathcal{F}_{\sigma_{n-1}-}])\otimes M_n^d+(A_n^d-\mathbb{E}[A_n^d|\mathcal{F}_{\sigma_{n-1}-}])^{\otimes 2}|\mathcal{F}_{\sigma_{n-1}-}]. 
\end{align*}
The first term is 
\begin{align*}
\mathbb{E}[(M_n^d)^{\otimes 2}|\mathcal{F}_{\sigma_{n-1}-}]=	&
\mathbb{E}[(\sigma_{n}-\sigma_{n-1})^2\pi_k(v_{\sigma_{n-1}}^{\bouncy,d})^{\otimes 2}|\mathcal{F}_{\sigma_{n-1}-}]=2\rho^{-2}d^{-1}I_k. 
\end{align*}
From this fact together with Lemma \ref{lem:component-1}, the other term converges to $0$. By the same argument as Lemma \ref{lem:component-3}, the claim follows. 
\end{proof}

\begin{lem}
	The process $Z^{\bouncy,d,k}$ converges in law to $Z^{\bouncy,k}$.
\end{lem}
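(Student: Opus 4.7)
The plan is to imitate the concluding step of the proof of Theorem~\ref{theo:bps_ll}. The preceding proposition gives $\overline{Z}^{\bouncy,d,k}\Rightarrow Z^{\bouncy,k}$, so by Lemma VI.3.31 of \cite{JS} it suffices to show that, for every $T>0$,
$$
\epsilon_T^d:=\sup_{0\le t\le T}\|Z_t^{\bouncy,d,k}-\overline{Z}_t^{\bouncy,d,k}\|\longrightarrow 0\quad\text{in probability.}
$$

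On any refreshment interval $t\in[\sigma_j/d,\sigma_{j+1}/d)$ we have $\overline{Z}_t^{\bouncy,d,k}=\pi_k(\xi_{\sigma_j}^{\bouncy,d})$ and $Z_t^{\bouncy,d,k}=\pi_k(\xi_{dt}^{\bouncy,d})$, so since $\dif\xi_s^{\bouncy,d}=v_s^{\bouncy,d}\dif s$,
$$
\|Z_t^{\bouncy,d,k}-\overline{Z}_t^{\bouncy,d,k}\|\le(\sigma_{j+1}-\sigma_j)\sup_{\sigma_j\le u<\sigma_{j+1}}\|\pi_k(v_u^{\bouncy,d})\|.
$$
The $\pi_k$-counterpart of the bound $\sup_{0\le u<\sigma_1}|S_u^{\bouncy,d}|\le|S_0^{\bouncy,d}|+\sigma_1$ exploited in Theorem~\ref{theo:bps_ll} is that this supremum is $O(d^{-1/2})$ rather than the trivial $\le 1$. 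Indeed, by the strong Markov property at $\sigma_j$ and the independence of $R_d$ from $N$, the post-refreshment velocity $v_{\sigma_j}^+$ is $\psi_d$-distributed and independent of $\xi_{\sigma_j}^{\bouncy,d}$, so applying relation~(\ref{eq:c}) on the $j$-th interval yields
$$
d^{1/2}\pi_k(v_u^{\bouncy,d})=d^{1/2}\pi_k(v_{\sigma_j}^+)+d^{-1/2}\mathcal{C}_u^{d,(j)},\qquad \sigma_j\le u<\sigma_{j+1},
$$
with $\mathcal{C}^{d,(j)}$ the analogue of $\mathcal{C}^d$ on that interval. The first term has moments uniformly bounded in $d$ by~(\ref{eq:beta_moment}) and the second by Lemma~\ref{lem:moment}, whence
$$
Q_j:=(\sigma_{j+1}-\sigma_j)\sup_{\sigma_j\le u<\sigma_{j+1}}\|\pi_k(v_u^{\bouncy,d})\|
$$
satisfies $\mathbb{E}[Q_j^m]=O(d^{-m/2})$ for every $m\in\mathbb{N}$, uniformly in $j$, and stationarity makes the $Q_j$ identically distributed.

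Let $N_{dT}^R$ be the number of refreshments in $[0,dT]$. The conclusion follows from the union bound
$$
\mathbb{P}(\epsilon_T^d>\epsilon)\le\mathbb{P}(N_{dT}^R>dJ)+dJ\,\mathbb{P}(Q_0>\epsilon),
$$
exactly as in Theorem~\ref{theo:bps_ll}: for $J$ large enough $\mathbb{P}(N_{dT}^R>dJ)$ is arbitrarily small (since $N_{dT}^R/d\to\rho T$), and Markov's inequality with $m\ge 3$ gives $dJ\,\mathbb{P}(Q_0>\epsilon)=O(d^{1-m/2})\to 0$. The main delicate point I anticipate is that Lemma~\ref{lem:moment} bounds $\mathbb{E}[\sup\|\mathcal{C}_u^{d,(j)}\|^m]$ only conditionally on $\pi_k(\xi_{\sigma_j}^{\bouncy,d})$ lying in a fixed compact set, whereas at stationarity this projection is $\mathcal{N}(0,I_k)$-distributed; inspection of the proof of that lemma shows, however, that the conditional bounds grow at most polynomially in $\|z\|$, so they integrate finitely against the Gaussian density uniformly in $d$, restoring the required unconditional moment control.
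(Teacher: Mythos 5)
Your proposal is correct and follows essentially the same route as the paper: reduce to $\sup_{0\le t\le T}\|Z^{\bouncy,d,k}_t-\overline{Z}^{\bouncy,d,k}_t\|\to 0$ via Lemma VI.3.31 of \cite{JS}, bound the increment on each refreshment interval by $d^{-1/2}$ times a quantity with uniformly bounded moments (your $Q_j$ is exactly the bound used in the paper's proof that $\mathcal{B}^d\in\mathcal{M}$), and finish with the same union bound plus a high-order Markov inequality. Your closing remark about passing from the compact-set-uniform moment bounds of Lemma~\ref{lem:moment} to the stationary (Gaussian) initial condition is a legitimate point the paper glosses over, and your resolution of it is sound.
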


\begin{proof}
By Lemma VI.3.31 of \cite{JS}, it is sufficient to show 
	$$
\epsilon^d_T:=\sup_{0\le t\le T}\|Z_t^{\bouncy,d,k}-\overline{Z}_t^{\bouncy,d,k}\|
\longrightarrow_{d\rightarrow \infty} ~0
	$$
in probability. 
Let $A_t$ be as in the proof of Theorem \ref{theo:bps_ll}. Then we have
\begin{align*}
	\epsilon^d_T\le \sup_{0\le j\le R_d(A_{dT})}\sup_{\sigma_j\le t<\sigma_{j+1}}\|\pi_k(\xi_t^{\bouncy,d})-\pi_k(\xi_{\sigma_j}^{\bouncy,d})\|. 
\end{align*}
Therefore, for $J\in\mathbb{N}$, 
\begin{align*}
	\mathbb{P}(\epsilon^d_t>\epsilon)& \le \mathbb{P}(R_d(A_{dT})>dJ)\\
	&\quad+\mathbb{P}\left(\sup_{0\le j\le dJ}\sup_{\sigma_j\le t<\sigma_{j+1}}\|\pi_k(\xi_t^{\bouncy,d})-\pi_k(\xi_{\sigma_j}^{\bouncy,d})\|>\epsilon\right) \\
& \le \mathbb{P}(R_d(A_{dT})>dJ)+dJ\mathbb{P}\left(\sup_{0\le t<\sigma_1}\|\pi_k(\xi_t^{\bouncy,d})-\pi_k(\xi_{0}^{\bouncy,d})\|>\epsilon\right). 
\end{align*}
On the other hand, 
$$
\|\pi_k(\xi_s^{\bouncy,d})-\pi_k(\xi_0^{\bouncy ,d})\|\le \int_0^s\|\pi_k(v_u^{\bouncy,d})\|\dif u
$$
and the forth moment of the norm is on the order of $d^{-2}$. Thus by Markov's inequality, $\epsilon_T^d$ is negligible.  
\end{proof}

\subsubsection{Proof of Theorem \ref{theo:bps_coordinate}}

\begin{proof}[Proof of Theorem \ref{theo:bps_coordinate}]
Weak convergence of $Z^{\bouncy, d,k}$ has been proved. 
Therefore, the proof of Theorem \ref{theo:bps_coordinate} will be completed if we can show the law of large numbers (\ref{eq:BPS_LLN}). The proof is essentially the same as that of Lemma B.4 of \cite{arXiv:1412.6231}. 
% \joris{I have looked up this proof and it is not immediately readible for a reader of the current paper. Can we give a proof in the text? I don't think the double limit is immediately obvious.}
% \kengo{Sure!}

Let $\|f\|_\infty=\sup_{x\in\mathbb{R}^k}|f(x)|$. Without loss of generality, we can assume 
$\int f(x)\phi_k(x)\dif x=0$. It is sufficient to show that 
$$
I_{d,T}:=\mathbb{E}\left[\left|\frac{1}{T}\int_0^Tf(Z_t^{\bouncy,d,k})\dif t\right|\right]\longrightarrow_{d,T\rightarrow\infty} 0. 
$$
Since the limiting process is the ergodic Ornstein-Uhlenbeck process, for any $\epsilon>0$ we can find $T_0>0$ so that 
$$
I_{T_0}=\mathbb{E}\left[\left|\frac{1}{T_0}\int_0^{T_0}f(Z_t^{\bouncy,k})\dif t\right|\right]<\epsilon
$$
by the law of large numbers. 
By dividing the interval $[0,T]$ into shorter intervals with length $T_0$, we have 
\begin{align*}
	I_{d,T}&=\mathbb{E}\left[\left|\frac{1}{T}\sum_{k=0}^{[T/T_0]-1}\int_{kT_0}^{(k+1)T_0}f(Z_t^{\bouncy,d,k})\dif t+\frac{1}{T}\int_{T_0[T/T_0]}^Tf(Z_t^{\bouncy,d,k})\dif t\right|\right]\\
& \le \frac{T_0}{T}\sum_{k=0}^{[T/T_0]-1}\mathbb{E}\left[\left|\frac{1}{T_0}\int_{kT_0}^{(k+1)T_0}f(Z_t^{\bouncy,d,k})\dif t\right|\right]\\
&\quad	+\frac{1}{T}\int_{T_0[T/T_0]}^T\mathbb{E}\left[\left|f(Z_t^{\bouncy,d,k})\right|\right]	\dif t. 
\end{align*}
Then by stationarity of the process $Z^{\bouncy,d,k}$ together with the weak convergence of $Z^{\bouncy,d,k}$, we have
\begin{align*}
	I_{d,T}& \le \frac{T_0}{T}\left[\frac{T}{T_0}\right]I_{d, T_0}+\frac{T-T_0[T/T_0]}{T}\|f\|_\infty
	\longrightarrow_{d,T\rightarrow\infty} I_{T_0}\le \epsilon,
\end{align*}
which completes the proof. 
\end{proof}

\section{Ergodic properties of the limiting processes}\label{asec:ergodicity}

\subsection{Proof of Theorem \ref{theo:ergodicity_tildeS}}

\begin{proof}[Proof of Theorem \ref{theo:ergodicity_tildeS}]
Construct $\mathcal{T}$ and $S^{\bouncy}$ as in Section \ref{subsec:ergodiclimit}.
We also set $\sigma_0=0$ and $W_0=S_0^\bouncy$. 
Firstly, we prove irreducibility and  aperiodicity  of the Markov process. For $K>0$, 
% let $[-K,K]$ be a compact set of $\mathbb{R}$ and 
let $\nu_K$ be the Lebesgue measure restricted to(0 $[-K,K]$. 
Consider an event
$$
B_T=\left\{\omega\in\Omega:R((0,T]\times\mathbb{R})=1, N(C_T)=0\right\}
$$
where $C_T=(0,T]\times[0,|x|+|W_1|+T]$. 
On the event, since $R((0,T]\times\mathbb{R})=1$ there is a single refreshment jump $\sigma_1$ until $T>0$. Recall that 
in each interval $[\sigma_i,\sigma_{i+1})$, the process $S^{\bouncy}$ has the same behavior as that of $\mathcal{T}$ with $\mathcal{T}_{\sigma_i}=W_i$. 
Therefore, by (\ref{eq:square_t}), we have
$$
\omega\in B_T~\Longrightarrow~ |S_t^{\bouncy}|\leq \left\{
\begin{array}{ll}
|x|+ t &\mathrm{if}\ t<\sigma_1\\
W_1+ t &\mathrm{if}\ \sigma_1\le t\le T\\	
\end{array}
\right.
\Longrightarrow
\sup_{t\le T}|S_t^{\bouncy}|\le |x|+|W_1|+T.
$$
Therefore, on the event $B_T$, the process $S^\bouncy$ only jumps at the refreshment time $t=\sigma_1$ until $T$, since the number of jumps due to $N$ up to time $T$ is 
$$
\int_{(0,T]\times\mathbb{R}_+} 1_{\{z\le S_{s-}^{\bouncy}\}}N(\dif s,\dif z)
\le N(C_T)=0. 
$$
Therefore, except for the refreshment jump time $\sigma_1$, $S^\bouncy$ moves deterministically, and hence 
$$
\omega\in B_T~\Longrightarrow~ S_t^{\bouncy}=\left\{
\begin{array}{ll}
x+t&\mathrm{if}\ t<\sigma_1\\
W_1+(t-\sigma_1)&\mathrm{if}\ \sigma_1\le t\le T. 
\end{array}
\right.
$$
Now we calculate the probability of the event $B_T$. 
Since $R_d$ and $N$ are independent
\begin{align*}
	\mathbb{P}_x(B_T)& =
	\mathbb{P}_x(R((0,T]\times\mathbb{R})=1)\times \mathbb{P}(N(C_T)=0)\\
%	& \joris{\text{shouldn't the first be $\mathbb{P}_x(R((0,T]) = 1) = \rho T e^{-\rho T}$?}} \\
	& =	
	\left\{\rho T~e^{-\rho T}\right\}\times \left\{\int_\mathbb{R}e^{-(|x|+|y|+T)T}\phi(y)\dif y\right\}
	\\
	& \ge 
	\left\{\rho T~e^{-\rho T}\right\}\times\left\{
c_T~e^{-(K+T)T}\right\}
\end{align*}
where $c_T=\int \exp(-T|y|)\phi(y)\dif y$. 
On the other hand, 
%\joris{the second inequality sign below does not seem to be correct or a bit strange.} 
%\begin{align*}
%	P_T(x, A)
%	& \ge\mathbb{P}_x(S^{\bouncy}_T\in A, B_T)\\
%	& =\mathbb{P}_x(W_1+(T-\sigma_1)\in A, B_T)\\
%	& =\mathbb{E}_x\left[\int_A\phi(y-(T-\sigma_1))\dif y, B_T\right]\\
%	& \ge\kappa_T~\nu_K(A)~\mathbb{P}_x(B_T)
%\end{align*}
%\joris{How about:
\begin{align*}P_T(x, A)
	& \ge\mathbb{P}_x(S^{\bouncy}_T\in A, B_T)\\
	& =\mathbb{P}_x(W_1+(T-\sigma_1)\in A, B_T)\\
	& =\mathbb{E}_x\left[\int_A\phi(y-(T-\sigma_1))\dif y, B_T\right]\\
	& \ge \inf_{0 \leq s \leq T} \int_{A \cap K} \phi(y - (T-s)) \, d y ~\mathbb{P}_x(B_T) \\
	& \ge \kappa_T \nu_K(A) ~\mathbb{P}_x(B_T),
\end{align*}
%}
%for $\kappa_T=\inf_{-K-T\le y\le K+T}\phi(y)$. \joris{
where $\kappa_T =  \inf_{0 \leq s \leq T} \inf_{y \in K} \phi(y-(T-s))$. 
%} 
By these estimates, we obtain 
$$
P_T(x,A)\ge \kappa_T \left\{\rho T e^{-\rho T}\right\}\times\left\{
c_T~e^{-(K+T)T}\right\}\nu_K(A)\quad  \text{for} \quad x \in [-K,K]. 
$$
Thus, the Markov process is $\nu_K$-irreducible and aperiodic, and any compact set is a small set. 

Secondly, we prove $V$-uniform ergodicity. We need to check 
\begin{equation}\label{eq:drift_criterion}
	HV(x)\le -\gamma V(x)+b1_C
\end{equation}
for some $\gamma, b>0$, a small set $C$, and a drift function $V:\mathbb{R}\rightarrow [1,\infty)$ where $H$ is defined in (\ref{eq:BPSgenerator}). However, by taking $V(x)=1+x^2$, we have
$$
\frac{HV(x)}{V(x)}=\frac{2x+\rho(1-x^2)}{1+x^2}~\longrightarrow_{|x|\rightarrow\infty}~-\rho. 
$$
Thus, the drift condition is satisfied for $C=[-R,R]$ and $\gamma=\rho/2$ when $R$ is sufficiently large. Thus $V$-uniform ergodicity follows by Theorem 5.2 of \cite{MR1379163}. 
\end{proof}

\subsection{Proof of Theorem \ref{theo:ergodicity_t}}

\begin{proof}[Proof of Theorem \ref{theo:ergodicity_t}]
%\joris{
Let $K > 0$ and consider $x \in [-K, K]$. Let $T = 2 K + 1$, and define%}
% For $x\in\mathbb{R}$ and $T>1$
% Let 
$$
B_T= \left\{\omega\in\Omega: N(\omega;C_T)=N(\omega;D_T)=1\right\}
$$
where 
$$
C_T=(0,T]\times [0,|x|+T],\ D_T=[(1-x)^+,T]\times [0,1].
$$ 
%\joris{But if $(1-x)^+ \geq T$, then $\mathbb P_x(N(D_T) = 1) = 0$. But we can restrict $x$ to $[-K,K]$, and adapt $T$ as a solution.}
On the event $B_T$, 
the number of jumps until time $T$ is 
$$
\int_{(0,T]\times\mathbb{R}_+} 1_{\{z\le\mathcal{T}_{s-}\}}N(\dif s,\dif z)
\le N(C_T)=1
$$
since $|\mathcal{T}_t|\le |x|+T\ (0\le t\le T)$ by (\ref{eq:square_t}). 
Thus the number of jumps is at most $1$. 
On the other hand, if there is no jump,  
then $\mathcal{T}_t=x+t\ (0\le t\le T)$. 
However, since $(1-x)^+\le t\Longrightarrow 1\le x+t= \mathcal{T}_{t-}$ we have
$$
\int_{(0,T]\times\mathbb{R}_+} 1_{\{z\le\mathcal{T}_{s-}\}}N(\dif s,\dif z)
\ge N(D_T)=1. 
$$
Therefore, there is a single jump until time $T$. 
% Let $\tau_1^{(1)}$ be the jump time of $N$ on the rectangle $D_T$, which follows the uniform distribution on $[(1-x)^+,T]$ \joris{I don't see why (uniform distribution), because the Poisson process is inhomogeneous, and we are dealing with a restriction to some event which complicates things. But I am sure we can bound the density of $\tau_1^{(1)}$.} \kengo{I forgot to emphasize that $\tau_1^{(1)}$ is also the unique jump of $N$ on the event $D_T$. The event $B_T$ does not depend on the indicator function $1_{\{z\le\mathcal{T}_{s-}\}}$ and it is characterized by the homogeneous Poisson process $N$ on the rectangle $D_T$. } 
Then, on the event $B_T$, we have 
\begin{align*}
\mathcal{T}_t=\left\{
\begin{array}{ll}
	x+t&\mathrm{if}~ t<\tau_1\\
	-(x+\tau_1^{(1)})+(t-\tau_1^{(1)})&\mathrm{if}~\tau_1\le t\le T,  
\end{array}
\right.		
\end{align*}
and hence 
\begin{align*}
		P_T(x,A)\ge&~\mathbb{P}_x(\mathcal{T}_T\in A, B_T)\\
		&=~\mathbb{P}_x(-(x+\tau_1^{(1)})+(T-\tau_1^{(1)})\in A, B_T)\\
		&=~\mathbb{P}_x\left(-(x+\tau_1^{(1)})+(T-\tau_1^{(1)})\in A|B_T\right)\times\mathbb{P}_x(B_T). 
\end{align*} 
We have
\begin{align*}
&\mathbb{P}_x\left(-(x+\tau_1^{(1)})+(T-\tau_1^{(1)})\in A|B_T\right)\\
&=~\int_{(1-x)^+}^T1_A(-(x+s)+(T-s))\frac{\dif s}{T-(1-x)^+}\\
	&\ge~T^{-1}\mathrm{Leb}(A\cap [-x-T,T-x-2(1-x)^+])
\end{align*}
where $\mathrm{Leb}$ is the Lebesgue measure. On the other hand, 
\begin{align*}
	\mathbb{P}_x(B_T)&=~\mathbb{P}_x(N(D_T)=1)\times \mathbb{P}(N(C_T\cap D_T^c)=0)\\
	&=~(T-(1-x)^+)e^{-(T-(1-x)^+)}\times e^{-(T(|x|+T)-(T-(1-x)^+))}\\
	&=:~c(T,x). 
\end{align*}
Since $c(T,x)>0\ (x\in\mathbb{R})$, the Markov process is $\mathrm{Leb}$-irreducible
and aperiodic
since we have $P_T(x,A)>0$  by taking $T>0$ sufficiently large. 
Also, by  
$c_T:=\inf_{x\in [-K,K]}c(T,x)>0$ we have
$$
P_T(x,A)\ge c_T~T^{-1}~\mathrm{Leb}(A\cap[K-T,T-K-2(1+K)^+])\ (x\in [-K,K]). 
$$
Thus any compact set is a small set. 

Finally, we prove $V$-uniform ergodicity. We need to check the drift criterion
(\ref{eq:drift_criterion})
for  $\gamma>0$, a small set $C$ and $V:E\rightarrow [1,\infty)$ and $G$ defined in (\ref{eq:generator_t}) in place of $H$. 
Construct a continuously differentiable function $V:E\rightarrow [1,\infty)$ so that 
\begin{equation}\label{eq:drift_for_T}
V(x)=
\left\{
\begin{array}{ll}
2\exp(x)&x>4\\
\exp(-x)&x\le 0. 
\end{array}
\right.
\end{equation}
Then $GV(x)=(2-x)e^x\le -V(x)$ for $x>4$, and $GV(x)=-V(x)$ for $x<0$. 
Thus the drift condition holds with $V(x)$, $C=[0,4]$ and $\gamma=1$. Thus the claim follows by Theorem 5.2 of \cite{MR1379163}. 
\end{proof}

\subsection{Proof of Proposition \ref{prop:covariance}}

By $V$-uniform ergodicity of the Markov process $\mathcal{T}$, for $s\le t$ and $k\in\mathbb{N}$, we have
\begin{equation}\label{eq:convergence_of_h}
	\left|\mathbb{E}[\mathcal{T}_t^k|\mathcal{T}_s=x]-\int y^k\phi(y)\dif y\right|\le C_k\gamma^{t-s}V(x)
\end{equation}
for some $C_k>0$, $\gamma\in (0,1)$ and hence the covariance function has exponential decay property 
$$
|K(s,t)|=\left|\mathbb{E}\left[\mathcal{T}_s\mathbb{E}\left[\mathcal{T}_t|\mathcal{T}_s\right]\right]\right|\le C_1\gamma^{t-s}\mathbb{E}[|\mathcal{T}_s|V(\mathcal{T}_s)]=C\gamma^{t-s}
$$
for some $C>0$ since the marginal distribution of $\mathcal T$ is the standard normal distribution and using the explicit form of $V$ given by~\eqref{eq:drift_for_T}.

%\kengo{The following argument is wrong.} By the change of variable formula $(r,\theta)\mapsto (r\cos\theta,r\sin\theta)$ together with the exponential decay of the covariance function, we have
%$$
%\int_\mathbb{R}\int_\mathbb{R}|K(s,t)|\dif s\dif t=
%\int_0^{2\theta}\int_0^\infty |K(0,r)|r\dif r\dif \theta<\infty. 
%$$
%\joris{Why not 
%\[ \int_\mathbb{R}\int_\mathbb{R}|K(s,t)|\dif s\dif t= \int_\mathbb{R}\int_\mathbb{R}|K(0,t-s)|\dif s\dif t  = 
%\int_0^{2\theta}\int_0^\infty |K(0,r \sin \theta - r \cos \theta)|r\dif r\dif \theta ? \]}
%\kengo{Thanks!}

\begin{proof}[Proof of Proposition \ref{prop:covariance}]
By (\ref{eq:convergence_of_h}) with $k=2$, we have
\begin{align*}
	0&=\lim_{t\rightarrow\infty}\mathbb{E}[(\mathcal{T}_t^2-\mathcal{T}_0^2)\mathcal{T}_0]\\
	&=
	\lim_{t\rightarrow\infty}\mathbb{E}\left[\left(\int_0^t2\mathcal{T}_s\dif s\right)\mathcal{T}_0\right]=
2\lim_{t\rightarrow\infty}\int_0^tK(s,0)\dif s=
2\int_0^\infty K(s,0)\dif s. 
\end{align*}
Hence we have (\ref{eq:integral_k}).

Next we calculate the derivatives of $
	K(t):=K(t,0)$. By It\^{o}'s formula together with the Lebesgue convergence theorem, we have
	\begin{align*}
		h^{-1}(K(t+h)-K(t))
		&=h^{-1}\mathbb{E}[(\mathcal{T}_{t+h}-\mathcal{T}_t)\mathcal{T}_0]\\
		&=h^{-1}\mathbb{E}\left[\int_0^h(1-2(\mathcal T_{t+s}^+)^2)\dif s~\mathcal{T}_0\right]\\
		&\longrightarrow_{h\rightarrow 0}\mathbb{E}\left[(1-2(\mathcal T_t^+)^2)\mathcal{T}_0\right]. 
	\end{align*}
	The first derivative at $t=0$ is 
	\begin{align*}
		K'(0)=\mathbb{E}\left[(1-2(\mathcal T_0^+)^2)\mathcal{T}_0\right]=-2\int_0^\infty x^3\phi(x)\dif x=-2 \sqrt{\frac{2}{\pi}}. 
	\end{align*}
	Similarly, the second derivative at $t=0$ is 
	\begin{align*}
		h^{-1}(K'(h)-K'(0))&=
		h^{-1}\mathbb{E}\left[\left\{(1-2(T_h^+)^2)\mathcal{T}_0\right\}-\left\{(1-2(T_0^+)^2)\mathcal{T}_0\right\}\right]\\
		&=-h^{-1}\mathbb{E}\left[\int_0^h(4\mathcal{T}_t^+-2(\mathcal{T}_t^+)^3)\dif s~\mathcal{T}_0\right]\\
		&\longrightarrow_{h\rightarrow 0}-\mathbb{E}\left[4(\mathcal T_0^+)^2-2(\mathcal T_0^+)^4\right]\\
		&=-\mathbb{E}\left[2T_0^2-T_0^4\right]=1. 
	\end{align*}
\end{proof}

%\joris{(i) Can this be iterated to obtain any higher order derivative with respect to $t$ at time $t = 0$? (ii) Assuming $K(t)$ admits a Taylor series, can we reconstruct an expression for this function?}
%\kengo{It is possible to find Taylor series expansion $K(t,0)=1+\sum_{i=1}^\infty a_nt^n$. I do not have a simple form of $a_n$, but numerical evaluation might be possible. I am now trying PDE approach. If it fails, I will try this. }

\section{Non-Gaussian results}\label{sec:non-gauss}

First we show that the process $S^{\bouncy,d}$ converges to $S_{H,t}^\bouncy:=H^{1/2}S_{H^{1/2}t}^\bouncy(H^{-1/2}\rho)$. 
Let $B'^0(\rho), \tilde{C}'^0(\rho)$ and $\nu^0(\rho)$ be the first, modified second and third characteristics of $S^\bouncy(\rho)$ (See Section \ref{subsubsec:semimartingale}). Then the first and modified second characteristics of the process
$S_H^\bouncy$ is given by 
\begin{align*}
	B'_T=H^{1/2}B'^0_{H^{1/2}T}(H^{-1/2}\rho),\ 
	\tilde{C}'_T=H \tilde{C}'^0_{H^{1/2}T}(H^{-1/2}\rho)
\end{align*}
and the third characteristic is given by  
\begin{align*}
	g*\nu_T=g(H^{1/2}\ \cdot\ )*\nu_{H^{1/2}T}^0(H^{-1/2}\rho). 
\end{align*}
Therefore, by the change of variable formula, we have
\begin{align*}
B'_T&=HT-\int_0^T\{(S_{H,t}^\bouncy)^+\}^2\dif t-\rho\int_0^TS_{H,t}^\bouncy\dif t\\
\tilde{C}'_T&=4\int_0^T\{(S_{H,t}^\bouncy)^+\}^3\dif t+\rho\int_0^T\left(H+(S_{H,t}^\bouncy)^2\right)\dif t
\end{align*}
and 
\begin{align*}
g*\nu_T=\int_0^Tg(-2S_{H,t}^\bouncy)(S_{H,t}^\bouncy)^+\dif t+\rho\int_0^T\int_{\mathbb{R}}\left(g(u-S_{H,t}^\bouncy)\right)\phi_H(u)\dif u\dif t
\end{align*}
where $\phi_H$ is the probability density  function of $\mathcal{N}(0, H)$. On the other hand, the process $S^{\bouncy,d}$ satisfies
\begin{align*}
S^{\bouncy,d}_T&=S^{\bouncy,d}_0+\int_0^T\nabla^2\Psi^d(\xi_t^{\bouncy,d})[(v_t^{\bouncy,d})^{\otimes 2}]\dif t-2\int_{(0,T]\times\mathbb{R}_+} S^{\bouncy,d}_{t-} 1_{\{z\le S^{\bouncy,d}_{t-}\}}N(\dif t,\dif z)\\
&\quad+\rho\int_{(0,T]\times\mathfrak{S}^{d-1}}\left(\langle\nabla\Psi^d( \xi_{t-}^{\bouncy,d}),u\rangle-S^{\bouncy,d}_{t-}\right)R_d(\dif t,\dif u)\nonumber, 
\end{align*}
by It\^{o}'s formula. The first and modified second characteristics are 
\begin{align*}
B'^d_T&:=\int_0^T\nabla^2\Psi^d(\xi_t^{\bouncy,d})[(v_t^{\bouncy,d})^{\otimes 2}]\dif t-2\int_0^T\left\{(S^{\bouncy,d}_t)^+\right\}^2\dif t-\rho\int_0^TS^{\bouncy,d}_t\dif t,\\ 
\tilde{C}'^d_T&:=4\int_0^T\left\{(S^{\bouncy,d}_t)^+\right\}^3\dif t+\rho\int_0^T\left(\frac{\|\nabla\Psi^d(\xi_{t}^{\bouncy,d})\|^2}{d}+(S^{\bouncy,d}_t)^2\right)\dif t, 
\end{align*}
and the third characteristic is 
\begin{align*}
	g(x)*\nu^d_T:=\int g(x)\nu^d_T(\dif x)&:=\int_0^T g(-2S^{\bouncy,d}_t)(S^{\bouncy,d}_t)^+\dif t\\
	&\quad + \rho\int_0^T\int_{\mathfrak{S}^{d-1}} g\left(\langle\nabla\Psi^d(\xi_t^{\bouncy,d}),u\rangle-S^{\bouncy,d}_t\right)\dif t\psi_d(\dif u)
\end{align*}
for a continuous bounded function $g$. 
For the proof of Proposition \ref{prop:general_phi}, we will apply Theorem IX.3.48 \cite{JS} by showing convergences of the characteristics. To show the convergence of $\tilde{C}'^d$, we need the next lemma. 

\begin{lem}\label{lem:d1}
	For $T>0$, we have
	\begin{align}
\label{eq:Uniform_consistency_delta}	
\sup_{0\le t\le T}\left|\frac{\|\nabla\Psi^d(\xi_t^{\bouncy,d})\|^2}{d}-H\right|~\longrightarrow_{d\rightarrow\infty}~0. 
	\end{align}
\end{lem}

\begin{proof}
Let $X^d_T$ be the left-hand side of (\ref{eq:Uniform_consistency_delta}). 
By It\^o's formula, 
\begin{align*}
	\frac{\|\nabla\Psi^d(\xi_T^{\bouncy,d})\|^2}{d}-\frac{\|\nabla\Psi^d(\xi_0^{\bouncy,d})\|^2}{d}
=2d^{-1}\int_0^T\nabla^2\Psi^d(\xi_t^{\bouncy,d})[\nabla\Psi^d(\xi_t^{\bouncy,d}),v_t^{\bouncy,d}]\dif t. 
\end{align*}
Recall that by the Cauchy-Schwarz inequality, we have
\[M[a,b]\le (M[a^{\otimes 2}])^{1/2}(M[b^{\otimes 2}])^{1/2}\]
for vectors $a, b$ and positive definite matrix $M$. We apply this inequality 
for $M_t:=\nabla^2\Psi^d(\xi_t^{\bouncy,d})$, 
$a_t:=\nabla\Psi^d(\xi_t^{\bouncy,d})/\|\nabla\Psi^d(\xi_t^{\bouncy,d})\|$ and $b_t:=v_t^{\bouncy, d}$. 
Note that $M_t[a_t^{\otimes 2}]$ and $M_t[b_t^{\otimes 2}]$ are bounded above by $C$
by (\ref{eq:Uniform_bound_nabla}). 
Also, we have a bound
\[
\frac{\|\nabla\Psi^d(\xi_t^{\bouncy,d})\|^2}{d}\le X_t^d+H
\]
by triangle inequality. 
Therefore we have
\begin{align*}
	X_T^d
	&\le X_0^d+\sup_{0\le t\le T}\left|\frac{\|\nabla\Psi^d(\xi_t^{\bouncy,d})\|^2}{d}-\frac{\|\nabla\Psi^d(\xi_0^{\bouncy,d})\|^2}{d}\right|\\
	&\le X_0^d+2d^{-1}\sup_{0\le t\le T}\left|\int_0^tM_s[a_s,b_s]\times \|\nabla\Psi^d(\xi_t^{\bouncy,d})\|\dif s\right|\\
	&\le X_0^d+
	2CTd^{-1/2}(X^d_T+H)^{1/2}\le 
	X_0^d+
	2CTd^{-1/2}(1+X^d_T+H)
\end{align*}
where we used $a^{1/2}\le 1+a$ for $a>0$. Hence
$$
X_T^d\le (1-2CTd^{-1/2})^{-1}~(X_0^d+2CTd^{-1/2}(1+H))~\longrightarrow_{d\rightarrow\infty}~0
$$ 
in probability since $X_0^d~\longrightarrow~0$ in probability by (\ref{eq:Consistency_delta}). 
\end{proof}

Next we show the following lemma to prove the convergence of $B'^d$. 

\begin{lem}\label{lem:d2}
	For $T>0$, we have
\begin{align}
\label{eq:Uniform_consistency_nabla}
&\sup_{0\le t\le T}\left|[\nabla^2\Psi^d(\xi_t^{\bouncy,d})][(v_t^{\bouncy,d})^{\otimes 2}]\dif s-H\right|~\longrightarrow_{d\rightarrow\infty}~0 
\end{align}
\end{lem}

\begin{proof}
Let $N_R(t)$ and $N_B(t)$ be the number of refreshment jumps and that of bouncy jumps with respectively. 
Since $N_R(T)=R_d((0,T]\times\mathbb{R})$ follows the Poisson distribution with intensity $\rho~T$, it is $\mathbb{P}$-tight. Suppose that the interval $[s,t)$ does not include refreshment jump times. Then, by It\^o's formula, we have
\begin{equation}\label{eq:Uniform_absolute_S}
	\left||S_t^{\bouncy,d}|-|S_s^{\bouncy,d}|\right|=\left|\int_s^t\nabla^2\Psi^d(\xi_u^{\bouncy,d})[(v_u^{\bouncy,d})^{\otimes 2}]~\mathrm{sgn}(S_u^{\bouncy,d})~\dif u\right|\le CT. 
\end{equation}
Therefore, if $0=\sigma_0<\sigma_1<\ldots$ are the refreshment jump times, we have a bound
$$
\sup_{t\in [0,T]}|S_t^{\bouncy,d}|\le CT+\sup_{n=0,\ldots, N_R(T)}|S_{\sigma_n}^{\bouncy,d}|. 
$$
The right-hand side is $\mathbb{P}$-tight since $S_{\sigma_n}^{\bouncy,d}\ (n=1,2,\ldots)$ has the same law as that of $S_0^{\bouncy,d}$, and $N_R(T)$ is $\mathbb{P}$-tight. Thus 
$B^d_T:=\sup_{t\in [0,T]}|S_t^{\bouncy,d}|$ is $\mathbb{P}$-tight. By this fact, 
$$
N_B(T)=\int_{(0,T]\times\mathbb{R}}1_{\{z\le S_{t-}^{\bouncy,d}\}}N(\dif s, \dif z)\le N((0,T]\times[0, B^d_T])
$$
is also $\mathbb{P}$-tight. 

Let $X_t^d$ be the random variable in the absolute value in the left-hand side of (\ref{eq:Uniform_consistency_nabla}). For $\epsilon>0$, let $D_\epsilon=\{0=t_0<\ldots<t_N\}\subset [0,T]$ be a finite set that includes all refreshment jump times and 
$\max|t_i-t_{i-1}|<\epsilon$. 
If the interval $[s,t)$ does not include refreshment jump times, then 
\begin{align*}
X_t^d-X_s^d &=
\int_s^t\nabla^3\Psi^d(\xi_u^{\bouncy,d})[(v_u^{\bouncy,d})^{\otimes 3}]\dif u\\
&\quad-\int_{(s,t]\times\mathbb{R}}\nabla^2\Psi(\xi_{u-}^{\bouncy,d})[(\kappa^d(x_{u-}^{\bouncy,d}))^{\otimes 2}-(v_{u-}^{\bouncy,d})^{\otimes 2}]1_{\{z\le S_{u-}^{\bouncy,d}\}}N(\dif u,\dif z). 
\end{align*}
By (\ref{eq:Uniform_bound_nabla}), we have
\begin{align*}
	|X_t^d-X_s^d|\le |t-s|\left(C+2CN_B(T)\right). 
\end{align*}
Then we have 
\begin{align*}
	\sup_{0\le t\le T}|X_t^d|\le 
	\sup_{t\in D}|X_t^d|+\epsilon~\left(C+2CN_B(T)\right)
\end{align*}
and the first term in the right-hand side converges to $0$ by (\ref{eq:Consistency_nabla}) which proves the claim. 
\end{proof}

\begin{lem}
$S^{\bouncy,d}$ converges to $S_H^\bouncy$. 
\end{lem}

\begin{proof}
We apply Theorem IX.3.48 \cite{JS}. 
The proof follows the same line as that of Theorem \ref{theo:bps_limit} and conditions (i-iv) of Theorem IX.3.48 directly follows from the argument in the proof of Theorem \ref{theo:bps_limit}. The condition (v) follows from \eqref{eq:tv_convergence} with condition (\ref{eq:Consistency_delta}). Conditions [$\delta_{\mathrm{loc}}$-D] and 3.49 can be proved in the same line as that of Theorem \ref{theo:bps_limit}.
Finally, we need to check conditions [Sup-$\beta_{\mathrm{loc}}'$], [$\gamma'_{\mathrm{loc}}$-D] of (vi) which follow from Lemmas \ref{lem:d1} and \ref{lem:d2}. 
\end{proof}

\begin{proof}[Proof of Proposition \ref{prop:general_phi}]
	By stationarity, 
	\begin{align*}
		4\rho\mathbb{E}\left[(\Psi^d(\xi_{\sigma_1}^{\bouncy,d})-\Psi^d(\xi_{\sigma_0}^{\bouncy,d}))^2\right]&=
				4\rho\mathbb{E}\left[\left\{\int_0^{\sigma_1}S_t^{\bouncy,d}\dif t\right\}^2\right]\\
				&=4\rho\int_0^\infty\int_0^\infty\mathbb{E}\left[1_{\{s,t\le\sigma_1\}}S_s^{\bouncy,d}S_t^{\bouncy,d}\right]\dif s\dif t. 
	\end{align*}
	By (\ref{eq:Uniform_bound_nabla}) together with It\^o's formula for $S^{\bouncy,d}$, we have a uniform bound
	$$
	|S_t^{\bouncy,d}|\le |S_0^{\bouncy,d}|+Ct
	$$
	by (\ref{eq:Uniform_absolute_S}). Thus, for $s\le t$, 
	\begin{align*}
		\left|\mathbb{E}\left[1_{\{s,t\le\sigma_1\}}S_s^{\bouncy,d}S_t^{\bouncy,d}\right]\right|
		&\le 
		\mathbb{E}\left[1_{\{t\le\sigma_1\}}(|S_0^{\bouncy,d}|+Ct)^2\right]\\
&= \mathbb{P}(t\le\sigma_1)\mathbb{E}\left[(|S_0^{\bouncy,d}|+Ct)^2\right]\\
&\le  \mathbb{P}(t\le\sigma_1)~2\mathbb{E}\left[|S_0^{\bouncy,d}|^2+(Ct)^2\right]\\
&=e^{-\rho t}2\mathbb{E}\left[\frac{\|\nabla\Psi(\xi_0^{\bouncy,d})\|^2}{d}+(Ct)^2\right]. 
	\end{align*}
Therefore, by (\ref{eq:Uniform_bound_nabla}), this value is bounded above by $\exp(-\rho t)$ times a polynomial of $t$. Thus by the dominated convergence theorem, 
	\begin{align*}
		4\rho\mathbb{E}\left[(\Psi^d(\xi_{\sigma_1}^{\bouncy,d})-\Psi^d(\xi_{\sigma_0}^{\bouncy,d}))^2\right]~\longrightarrow_{d\rightarrow\infty}~4\rho\int_0^\infty\int_0^\infty\mathbb{E}\left[1_{\{s,t\le\sigma_1\}}S_{H,s}^{\bouncy}S_{H,t}^{\bouncy}\right]\dif s\dif t. 
	\end{align*}
Now we are going to substitute $S_{H,t}^{\bouncy}$ in the right hand side by $H^{1/2}S_{H^{1/2}t}^\bouncy$. 
For this substitution, the refreshment jump time $\sigma_1$ is also changed to $H^{-1/2}\sigma_1$. 
	Therefore, the right-hand side of the above equation equals to 
	\begin{align*}
&4\rho\int_0^\infty\int_0^\infty\mathbb{E}\left[1_{\{H^{1/2}~s,\ H^{1/2}~t~\le~\sigma_1\}}(H^{1/2}S_{H^{1/2}s}^{\bouncy})(H^{1/2}S_{H^{1/2}t}^{\bouncy})\right]\dif s\dif t\\
&=
4\rho\int_0^\infty\int_0^\infty\mathbb{E}\left[1_{\{s,t\le \sigma_1\}}S_s^{\bouncy}S_t^{\bouncy}\right]\dif s\dif t\\
&=
4\rho\int_0^\infty\int_0^\infty e^{-H^{-1/2}\rho~\max\{s,t\}}K(s-t,0)\dif s\dif t=H^{1/2}\sigma^2(H^{-1/2}\rho). 
	\end{align*}
\end{proof}

\section{Details for experiments}
\label{sec:experiments-details}

\subsection{Exact values for mean and variance for quantities of interest}
%
%We then use the following formula for the Effective Sample Size (ESS) for that trajectory:
%\[ \text{ESS} = \frac{ \Var_{\pi} [f(\xi)]}{\left(\frac 1 n \sum_{i=1}^n f(\xi^{(i)}) - \E_{\pi}[f(\xi)]\right)^2}. \]
\label{sec:exactvalues}

\subsubsection{IID Gaussian}
\label{sec:iidgaussian}
$\Psi^d(\xi) = \sum_{i=1}^d (\xi_i)^2/2$.
In this case, we have
$\E_{\pi} [\Psi^d(\xi)] = d/2$, $\Var_{\pi} [\Psi^d(\xi)] = d/2$, $\E_{\pi} [\xi_i1]= 0$, $\Var_{\pi} [\xi_i1] = 1$.

\subsubsection{Correlated Gaussian}

$\xi \sim \mathcal N(0, \Sigma)$, where 
\[ \Sigma = \begin{pmatrix} 1 & \rho & \hdots & \rho \\
\rho & \ddots & \ddots  & \vdots \\
\vdots & \ddots & \ddots& \rho \\
\rho &  \hdots & \rho & 1
\end{pmatrix},
\]
so that $\Psi^d(\xi) = \tfrac 1 2 x^{\top} V x$,  with $V = \Sigma^{-1}$. Note that $\eta  := V^{1/2} \xi \sim \mathcal N(0, I_d)$, and $\Psi^d(\xi) = \|\eta\|^2/2$. Therefore in this case again
$\E_{\pi} [\Psi^d(\xi)] = d/2$, $\Var_{\pi} [\Psi^d(\xi)] = d/2$, $\E_{\pi} [\xi_i1]= 0$, $\Var_{\pi} [\xi_i1] = 1$.
\subsubsection{IID Student}

$\Psi^d(\xi) = \left(\frac{\nu + 1}{2}\right) \sum_{i=1}^d \log \left(1 + \frac {(\xi_i)^2}{\nu} \right)$ and
\[ \pi(x) = \left(  \frac{\Gamma \left( \frac{\nu+1}{2} \right)}{\sqrt{\nu \pi} \Gamma \left( \frac{\nu}{2} \right)} \right)^d \prod_{i=1}^d  \left( 1 + \frac{(\xi_i)^2}{\nu} \right)^{-\frac{\nu+1}{2}}, \quad x \in \RR^d.\]
We have $\Var_{\pi}[\xi_1] = \frac{\nu}{\nu - 2}$.
Furthermore
\[ \E_{\pi}[\Psi^d(\xi)] =  \left(\frac{\nu + 1}{2}\right)  \frac{\Gamma \left( \frac{\nu+1}{2} \right)}{\sqrt{\nu \pi} \Gamma \left( \frac{\nu}{2} \right)} \sum_{i=1}^d \int_{\RR}   \log \left(1 + \frac {(\xi_i)^2}{\nu} \right)  \left( 1 + \frac{(\xi_i)^2}{\nu} \right)^{-\frac{\nu+1}{2}}\, \dif \xi_i, \] which scales linearly with dimension. There are no simple analytic expressions for $\E_{\pi}[\Psi^d(\xi)]$ and $\Var_{\pi}[\Psi^d(\xi)]$ but we may obtain the following values by numeric integration:

\begin{tabular}{c||c|c}
	$\nu$ & $\E_{\pi}[\Psi^d(\xi)]$ & $\Var_{\pi}[\Psi^d(\xi)]$\\
	\hline
	1 & $1.38629 \times d$ & $3.28987 \times d$\\
	2 & $0.920558 \times d$ & $1.59780 \times d$ \\
	3 & $0.772589 \times d$ & $1.15947 \times d$ \\
	4 & $0.700931 \times d$ & $0.966102 \times d$\\
	5 & $0.658883 \times d$ & $0.858813 \times d$
\end{tabular}

\subsubsection{Spherically symmetric Student}
\label{sec:sphericallysymmetricstudent}
The potential is given by $\Psi^d(\xi) = \left( \frac{\nu+ d}{2}\right) \log \left(1 + \frac{\|x\|^2}{\nu} \right)$, and the probability density function is
\[ \pi(x) = \frac{\Gamma \left(\frac{d + \nu}{2} \right) }{(\pi \nu)^{d/2} \Gamma\left( \frac{\nu}{2} \right)} \left( 1 + \frac{\|x\|^2}{\nu} \right)^{-(\nu + d)/2}, \quad x \in \RR^d.\]
We follow \cite{boisbunon2012class} to obtain
the probability density for $T = \|\xi\|^2/d$ given by
\[ h(t) = \frac 1 {t B \left(\frac{\nu}{2}, \frac{d}{2} \right)} \left(d t\right)^{\frac d 2} \nu^{\frac{\nu}{2}} \left(\nu + d t \right)^{-\frac{(\nu + d)}{2}}, \quad t \geq 0,\]
corresponding to a F-distribution with parameters $(d, \nu)$.
By \cite[Section 2.2.2]{FangKotzNg1990}, $\Var_{\pi}[\xi_1] = \E[T] = \frac{\nu}{\nu - 2}$.
Furthermore
\begin{align*}
\E_{\pi}[\Psi^d(\xi)] & = \int_0^{\infty} \left( \frac{\nu+ d}{2}\right) \log \left(1 + \frac{t d}{\nu} \right) h(t) \, \dif t
\end{align*}
The value of this expression may be expressed in terms of special functions or obtained by numerical integration.
We list a few values in the table below:

\begin{tabular}{c|c||c|c} 
	$d$ & $\nu$ & $\E_{\pi}[\Psi^d(\xi)]$ & $\Var_{\pi}[\Psi^d(\xi)]$ \\
	\hline
	1 & 4  & 0.700931 & 0.966102 \\
	2 & 4 & 1.50000 & 2.25000\\
	4 & 4 & 3.33333 & 5.77778\\
	8 & 4 & 7.70000 & 16.6900\\
	16 & 4 & 18.2897 & 53.9768 \\
	32 & 4 & 43.9119 & 190.449\\
	64 & 4 & 105.019 & 711.039\\
	128 & 4 & 248.112 & 2742.83 \\
	256 & 4 & 577.317 & 10768.9
\end{tabular}

\subsection{Ergodic average evaluation}
\label{sec:ergodicaverage}

As part of our numerical computations, we wish to evaluate
\[ \frac 1 T \int_0^T h(\xi(s)) \, \dif s \]
where $(\xi(t))_{0 \leq t \leq T}$ is a trajectory of a piecewise deterministic process associated with negative log density $\Psi^d$.
Suppose $(\xi(t))_{0 \leq t \leq T}$ is determined by the skeleton points and skeleton times
\[ (\xi_0, \xi_1, \xi_2, \dots, \xi_n), \quad (v_0, v_1, \dots, v_{n-1}), \quad (0=t_0< t_1< \dots< t_n = T),\]
i.e. $\xi(s) = \xi_{i-1} + (s - t_{i-1}) v_{i-1}$ for $t_{i-1} \leq s < t_i$.
We may write
\[ \frac 1 T \int_0^T h(\xi(s)) \, \dif s = \frac 1 T \sum_{i=1}^n \int_0^{t_i - t_{i-1}} h\left (\xi_{i-1} + s v_{i-1} \right) \, \dif s \]
To carry out this computation conveniently, we define functions $F(\xi, v, \Delta)$ such that
\[ F(\xi, v, \Delta) = \int_0^{\Delta} h \left(\xi + s  v \right) \, \dif s\]
for all values of $x$, $v$ and $\Delta$. Once we have access to $F$, we can compute
\[ \frac 1 T \int_0^T h(\xi(s)) \, \dif s =   \frac{1}{T} \sum_{i=1}^n F \left(\xi_{i-1}, v_{i-1}, t_i - t_{i-1}\right).\]

\subsubsection*{Remark} It is tempting to use $v_{i-1} = \frac {\xi_i - \xi_{i-1}}{t_i - t_{i-1}}$ in these computations; however this finite difference operation can be numerically unstable. Therefore we evaluate $F$ in the velocities $(v_i)$ as returned by the piecewise deterministic simulation, and do \emph{not} compute these from the values $(\xi_i)$ and $(t_i)$.

It remains to determine the functions $F$ for the examples above.

\subsubsection{Gaussian distribution}
If $h(\xi) = \tfrac 1 2 \xi^{\top} V \xi$, then we have
\begin{align*}
F(\xi, v, \Delta) & = \int_0^{\Delta} \tfrac 1 2 \left(\xi + sv \right)^{\top} V  \left(\xi + sv \right) \, \dif s = \tfrac {\Delta} 2 \xi^T V \xi + \tfrac {\Delta^2} {2} v^T V \xi + \tfrac{ \Delta^3} 6 v^T V v.
\end{align*}

\subsubsection{IID Student}
We have $h(\xi) = \frac{\nu+1}{2} \sum_{j=1}^d \log \left(1 + \frac{\xi_j^2}{\nu} \right)$. Using that
\begin{equation} \label{eq:some-primitive} G(y) := \int_0^y \log(1+r^2) \, d r = y \log (1 +y^2) - 2 y + 2 \arctan y , \end{equation}
we obtain that
\begin{align*}
F(\xi, v, \Delta) & = \left(\frac {\nu+1}{2}\right) \sum_{j=1}^d \int_0^{\Delta} \log \left(1 + \left(\xi_j + s v_j\right)^2/\nu\right) \, \dif s \\
& =  \left( \frac {\nu+1}{2}\right) \sum_{j=1}^d \frac{\sqrt{\nu}}{v_j} \left[ G((\xi_j + \Delta v_j)/\sqrt{\nu}) - G(\xi_j/\sqrt{\nu}) \right]
\end{align*}

\subsubsection{Spherically symmetric Student}
Here
\[ h(\xi) = \left(\frac{ \nu + d}{2} \right) \log \left(1 + \frac{\|\xi\|^2}{\nu} \right).\]
After some manipulations we obtain that
\begin{align*}
& F(\xi, v, \Delta) \\
& = \left( \frac{ \nu+d}{2 } \right) \int_0^{\Delta}\log \left(1 + \frac{\| \xi + s v\|^2}{\nu} \right) \, \dif s \\
& =  \left(\frac{ \nu+d}{2} \right) \left\{\Delta \log (a - b^2/c) + \frac{\sqrt{ a - b^2/c}}{\sqrt{c}} \left[ G \left( \frac{b + \Delta c}{\sqrt{ c a - b^2}} \right) - G \left(\frac{b}{\sqrt{ c a - b^2}} \right) \right]\right\},
\end{align*}
where $G$ is defined as in~\eqref{eq:some-primitive}, and 
\[ a = 1+\frac{\| \xi\|^2}{\nu}, \quad b = \frac{\langle \xi, v\rangle}{\nu}, \quad \text{and} \quad c = \frac{\|v\|^2}{\nu}.\]
As a sanity check, if $d = 1$, then $a - b^2/c = 1$ and we obtain
\[ F(\xi, v, \Delta) = \left( \frac{\nu+1}{2} \right) \frac{\sqrt{\nu}}{v} \left[ G \left((\xi + \Delta v)/\sqrt{\nu} \right) - G(\xi/\sqrt{\nu}) \right], \]
as in the IID Student case.

%\section{Determining ESSpS}
%
%In order to estimate the quantity ESSpS (Effective Sample Size per Second) we use the following strategy. We assume that, for a given experimental setup, there is a quantity $\tau > 0$ representing a fictitious amount CPU time required to obtain an independent sample. This means that conditional on an experimental runtime $T_i$, we expect that the error $E_i$ of the $i$-th experiment is asymptotically normal (as $T_i \rightarrow \infty$):
%\[ \sqrt{T_i/\tau} E_i \sim \mathcal N(0,\sigma^2),\]
%where $\sigma^2$ is the variance of a single independent sample from the stationary distribution.
%We can rewrite this as $\sqrt{T_i} E_i / \sigma \sim \mathcal N(0,\tau)$, so that in particular,
%\[ \widehat \tau = \frac 1 n \sum_{i=1}^n T_i E_i^2/\sigma^2 \]
%is an estimator for $\tau$. ESSpS is defined as $1/\tau$ with estimator $1/\widehat \tau$.

\bibliographystyle{plainnat}
%\bibliographystyle{acm}
%\bibliography{mybib} 
%

\end{document}